\numberwithin{equation}{section}
\newcommand{\dashedrightarrow}{%
  \settowidth{\@tempdima}{$\rightarrow$}\rightarrow
  \makebox[-\@tempdima]{\hskip-1.5ex\color{white}\rule[0.5ex]{2pt}{1pt}}
  \phantom{\rightarrow}
}
\theoremstyle{plain}
\newtheorem{thm}{Theorem}[section]
\newtheorem{lem}[thm]{Lemma}
\newtheorem{cor}[thm]{Corollary}
\newtheorem{prop}[thm]{Proposition}
\newtheorem{conj}[thm]{Conjecture}
 \theoremstyle{definition}
\newtheorem{defn}[thm]{Definition}
\newtheorem{rem}[thm]{Remark}
\newtheorem{ex}[thm]{Example}
\newtheorem{notn}[thm]{Notation}
\newcommand{\op}[1]{\operatorname{#1}}
\newcommand{\mb}[1]{\mathbb{#1}}
\newcommand{\mc}[1]{\mathcal{#1}}
\newcommand{\mr}[1]{\mathrm{#1}}
\newcommand{\floor}[1]{\lfloor{#1}\rfloor}
\newcommand{\eps}{\varepsilon}
\newcommand{\vphi}{\varphi}
\newcommand{\Spec}{\operatorname{Spec}}
\newcommand{\GW}{\mathrm{GW}}
\renewcommand{\H}{\mathrm{H}}
\newcommand{\Hdg}{\mathrm{Hdg}}
\newcommand{\Tr}{\mathrm{Tr}}
\newcommand{\1}{\mathbbm{1}}
\newcommand{\SH}{\mathrm{SH}}
\newcommand{\cat}{\mathrm{cat}}
\newcommand{\dR}{\mathrm{dR}}
\newcommand{\mot}{\mathrm{mot}}
\newcommand{\rig}{\mathrm{rig}}
\newcommand{\et}{\mathrm{\acute{e}t}}
\newcommand{\Nis}{\mathrm{Nis}}
\newcommand{\Zar}{\mathrm{Zar}}
\newcommand{\cdh}{\mathrm{cdh}}
\newcommand{\tors}{\mathrm{tors}}
\newcommand{\tr}{\mathrm{tr}}
\newcommand{\id}{\mathrm{id}}
\newcommand{\Var}{\mathrm{Var}}
\newcommand{\Char}{\operatorname{char}}
\newcommand{\End}{\mathrm{End}}
\newcommand{\Hom}{\mathrm{Hom}}
\newcommand{\ev}{\mathrm{ev}}
\newcommand{\dlb}{[\![}
\newcommand{\drb}{]\!]}
\newcommand{\Sym}{\mathrm{Sym}}
\newcommand{\Hilb}{\mathrm{Hilb}}
\newcommand{\Gr}{\mathrm{Gr}}
\newcommand{\uh}{\mathrm{uh}}
\newcommand{\rank}{\operatorname{rank}}
\newcommand{\sign}{\operatorname{sign}}
\begin{document}
\title{Symmetric powers of null motivic Euler characteristic}

\author{Dori Bejleri}
\address{Department of Mathematics \\ University of Maryland}
\email{dbejleri@umd.edu}
\urladdr{math.umd.edu/~bejleri}

\author{Stephen McKean}
\address{Department of Mathematics \\ Brigham Young University} 
\email{mckean@math.byu.edu}
\urladdr{shmckean.github.io}

\begin{abstract}
    Let $k$ be a field of characteristic not 2. We conjecture that if $X$ is a quasi-projective $k$-variety with trivial motivic Euler characteristic, then $\Sym^n X$ has trivial motivic Euler characteristic for all $n$. Conditional on this conjecture, we show that the Grothendieck--Witt ring admits a power structure that is compatible with the motivic Euler characteristic and the power structure on the Grothendieck ring of varieties. We then discuss how these conditional results would imply an enrichment of G\"ottsche's formula for the Euler characteristics of Hilbert schemes.
\end{abstract}

\maketitle

\section{Introduction}
With an eye towards applications in enumerative geometry over non-closed fields, we investigate a few properties of the motivic Euler characteristic. We begin with a brisk summary of our conjecture and results. We will explain the context of this paper in Section~\ref{sec:motivation}. Let $k$ be a field of characteristic not 2. Let $K_0(\Var_k)$ and $\GW(k)$ denote the Grothendieck ring of varieties over $k$ and the Grothendieck--Witt ring of bilinear forms over $k$, respectively. Let
\[\chi^c:K_0(\Var_k)\to\GW(k)\]
denote the \textit{motivic} or \textit{compactly supported Euler characteristic} (see \cite{Rondigs,AMBOWZ:CompactEuler,LPLS24,Azouri} and Section~\ref{sec:euler char}). Finally, let $\Sym^n:K_0(\Var_k)\to K_0(\Var_k)$ denote the $n\textsuperscript{th}$ symmetric power map, defined on classes of quasi-projective varieties by $[X]\mapsto[\Sym^n X]$. We conjecture that $\Sym^n$ preserves $\ker\chi^c$ for all $n$.

\begin{conj}\label{conj:main}
Let $k$ be a field of characteristic not 2. If $X$ is a quasi-projective $k$-variety such that $[X]\in\ker\chi^c$, then $[\Sym^nX]\in\ker\chi^c$ for all $n\geq 1$.
\end{conj}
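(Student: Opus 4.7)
The plan is to recast the conjecture multiplicatively using Kapranov's motivic zeta function
$$\zeta_X(t) := \sum_{n\geq 0}[\Sym^nX]\,t^n\in 1 + tK_0(\Var_k)\dlb t\drb.$$
The identity $\Sym^n(U\sqcup Z)\cong\bigsqcup_{a+b=n}\Sym^aU\times\Sym^bZ$ gives $\zeta_{U\sqcup Z}(t)=\zeta_U(t)\zeta_Z(t)$, so $\zeta$ extends to a group homomorphism from $(K_0(\Var_k),+)$ to $(1+tK_0(\Var_k)\dlb t\drb,\cdot)$, and the conjecture is equivalent to the assertion that the coefficientwise image $(\chi^c\zeta_X)(t)=\sum_n\chi^c(\Sym^nX)\,t^n\in\GW(k)\dlb t\drb$ is identically $1$ whenever $\chi^c(X)=0$. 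Framing things this way makes the stratification and compactification steps transparent.

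The first step is a reduction to the smooth projective case. Writing $X$ as a stratification by smooth projective pieces using compactifications and resolution of singularities (in characteristic zero, via Bittner's presentation, or more directly via alterations in positive characteristic), multiplicativity of $\zeta$ reduces the claim to a statement about the individual smooth projective strata. The conceptual wrinkle is that $\chi^c(X)=0$ does not imply $\chi^c=0$ on each stratum; I am really proving the stronger assertion that $(\chi^c\zeta_Y)(t)$ depends only on $\chi^c(Y)$ through a universal formula (an enrichment of Macdonald's identity), which then immediately gives the desired vanishing by specialising at $\chi^c(Y)=0$.

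For smooth projective $X$, I would establish the universal formula by testing it against realisations of $\chi^c$. The rank realisation reduces to Macdonald's classical identity $\sum_n\chi(\Sym^nX)t^n = (1-t)^{-\chi(X)}$; the signatures at real places of $k$ reduce to the analogous closed form over the real closures; and the discriminant, together with higher Milnor--Witt invariants, can in principle be controlled by tracking the Galois action on the diagonals of $X^n$ and on the fixed loci of subgroups of $S_n$ acting on $X^n$.

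The hardest step will be closing the gap between realisations and $\GW(k)$ itself: rank, discriminant, and signatures are not jointly faithful on $\GW(k)$ over a general field, so one must rule out torsion contributions to $\chi^c(\Sym^nX)$ that are invisible to these invariants. I anticipate that such contributions can be controlled either by a direct geometric study of the inertia stratification of the quotient $X^n\to\Sym^nX$ together with the trace-form behaviour of each stratum, or by lifting the identity to the Milnor--Witt motivic cohomology spectrum, where a universal formula ought to hold at the spectral level and then descend to $\GW(k)$ by applying $\chi^c$. Handling positive characteristic introduces a secondary obstacle, since Bittner's presentation is unavailable and one must instead rely on alterations and on the $\ell$-adic and rigid realisations of $\chi^c$.
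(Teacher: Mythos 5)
This statement is Conjecture~\ref{conj:main}: the paper does not prove it, and your proposal does not either. What you have written is a programme whose every step up to the last is either already in the paper or equivalent to the conjecture itself, and whose last step is the actual open problem. Concretely: your ``stronger assertion'' that $\sum_n\chi^c(\Sym^nX)\,t^n$ depends on $X$ only through $\chi^c(X)$ via a universal formula is precisely the compatibility of $\chi^c$ with the power structures on $K_0(\Var_k)$ and $\GW(k)$, which Theorem~\ref{thm:power structure} shows is \emph{equivalent} to the conjecture, not a reduction of it. Your realisation step is carried out in the paper: vanishing of $\rank\chi^c(\Sym^nX)$ and of $\sign_\sigma\chi^c(\Sym^nX)$ for all real closed embeddings $\sigma$ is Corollary~\ref{cor:symmetric torsion} (note the subtlety you elide: $(\Sym^nX)_\sigma(R)\neq\Sym^n(X_\sigma(R))$, so the signature computation requires the decomposition into real and conjugate-pair strata). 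By Pfister's local--global principle this places $\chi^c(\Sym^nX)$ in $\GW(k)_\tors$ and proves the conjecture exactly when $\GW(k)$ is torsion-free (Theorem~\ref{thm:conj over pythagorean}) --- and no further.

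The remaining gap is genuine and is the whole content of the conjecture. Over fields such as $\mb{Q}_p$ or number fields, $\GW(k)_\tors\neq 0$, and rank, signatures, discriminant, and Hasse--Witt invariants are not jointly faithful on $\GW(k)$; ``I anticipate that such contributions can be controlled'' is not an argument, and ``tracking the Galois action on the diagonals'' does not produce one. The structural reason the obvious approaches fail is identified in the paper: $\chi^c$ is multiplicative over Nisnevich- and cdh-locally trivial fibrations but \emph{not} over \'etale-locally trivial ones, and the quotient $X^n\setminus\Delta\to(X^n\setminus\Delta)/S_n$ is a degree $n!$ \'etale cover with irreducible source, hence is not locally trivial in any completely decomposed topology after any stratification. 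So there is no ``universal formula'' available by locality, and a spectral-level lift to Milnor--Witt motivic cohomology would have to overcome exactly the same obstruction. If you want an unconditional statement, what your method actually yields is Theorem~\ref{thm:complex and real power structures} and the pythagorean case; to claim more you must exhibit a mechanism that kills the torsion classes, which no one currently has.
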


The complex and real analogs of Conjecture~\ref{conj:main} are true. Indeed, $X^n\to X^n/S_n$ is \'etale-locally trivial after a suitable stratification, and the rank and signature of $\chi^c$ (which correspond to the compactly supported Euler characteristic of $X(\mb{C})$ and $X(\mb{R})$ by Remark~\ref{rem:rank} and Proposition~\ref{prop:sign = etale of real locus}) factor over \'etale-locally trivial fiber bundles. In particular, if $\chi^c(X)=0$, then the rank and signature\footnote{Some extra care is needed due to the discrepancy between $(\Sym^nX)(\mb{R})$ and $\Sym^n(X(\mb{R}))$; see Corollary~\ref{cor:symmetric torsion} for details.} of $\chi^c(\Sym^n X)$ both vanish for all $n\geq 1$. (This allows us to prove Conjecture~\ref{conj:main} over pythagorean fields; see Theorem~\ref{thm:conj over pythagorean}). In general, $\chi^c$ only factors over locally trivial fiber bundles in topologies coarser than the \'etale topology.

Conjecture~\ref{conj:main} is closely tied to the power structure on $K_0(\Var_k)$ \cite{PowerStructure-K0(Var)}, the motivic Euler characteristic, and a desirable power structure on $\GW(k)$. We make this precise in the following theorem.

\begin{thm}\label{thm:power structure}
Let $k$ be a field of characteristic not 2. Let
\[\mu_0:(1+t\cdot K_0(\Var_k)\dlb t\drb)\times K_0(\Var_k)\to 1+t\cdot K_0(\Var_k)\dlb t\drb\]
be the power structure defined in Section~\ref{sec:power structure K0}. Then Conjecture~\ref{conj:main} is true if and only if there exists a power structure
\[\mu_\GW:(1+t\cdot\GW(k)\dlb t\drb)\times\GW(k)\to 1+t\cdot \GW(k)\dlb t\drb\]
such that
\[\chi^c(\mu_0(A(t),M))=\mu_\GW(\chi^c(A(t)),\chi^c(M))\]
for all $A(t)\in 1+t\cdot K_0(\Var_k)\dlb t\drb$ and $M\in K_0(\Var_k)$.
\end{thm}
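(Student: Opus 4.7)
\emph{Backward direction ($\Leftarrow$).} Assume $\mu_\GW$ exists with the stated compatibility. For any quasi-projective $X$ with $\chi^c(X)=0$, specialize the compatibility to $A(t)=(1-t)^{-1}$ and $M=[X]$. The defining property of $\mu_0$ yields $\mu_0((1-t)^{-1},[X])=\sum_{n\geq 0}[\Sym^n X]\,t^n$, while the universal power structure axiom $\mu_\GW(A(t),0)=1$ yields $\mu_\GW(\chi^c((1-t)^{-1}),0)=1$. Equating coefficients gives $\chi^c(\Sym^n X)=0$ for all $n\geq 1$, which is Conjecture~\ref{conj:main}.

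\emph{Forward direction ($\Rightarrow$).} The plan is to define $\mu_\GW$ by descent from $\mu_0$ along $\chi^c$. Since $\chi^c:K_0(\Var_k)\to\GW(k)$ is surjective (trace forms generate $\GW(k)$ and each arises as $\chi^c(\Spec L)$), every pair $(B(t),N)$ admits lifts $(A(t),M)$ under $\chi^c$, and we set $\mu_\GW(B(t),N):=\chi^c(\mu_0(A(t),M))$. Granted well-definedness, the power structure axioms for $\mu_\GW$ and the compatibility equation transfer directly from the corresponding properties of $\mu_0$.

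The crux is well-definedness. Using multiplicativity of $\mu_0$ in the first slot and the identity $\mu_0(A,M+N)=\mu_0(A,M)\mu_0(A,N)$, it suffices to show $\chi^c(\mu_0(A(t),P))=1$ whenever $\chi^c(A(t))=1$ or $P\in\ker\chi^c$. Since the series $(1-t^i)^{-[Y]}$ for varieties $Y$ generate $1+tK_0(\Var_k)\dlb t\drb$ as an abelian group, and $\mu_0((1-t^i)^{-[Y]},P)=(1-t^i)^{-[Y]P}$, both cases reduce to a virtual form of Conjecture~\ref{conj:main}: for any $Q\in\ker\chi^c\subset K_0(\Var_k)$, $\chi^c(\Sym^n Q)=0$ for all $n\geq 1$, where $\Sym^n$ is extended from varieties to virtual classes via the $\lambda$-ring rules. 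Writing $Q=[X_1]-[X_2]$ with $\chi^c(X_1)=\chi^c(X_2)$, this identity reads
\[\chi^c(\Sym^n X_1)=\chi^c(\Sym^n X_2)\qquad\text{for all }n\geq 1.\]
The approach is to find an auxiliary variety $W$ with $\chi^c(W)=-\chi^c(X_1)$; applying Conjecture~\ref{conj:main} to the varieties $X_j\sqcup W$ gives $\sigma_{X_j}(t)\,\sigma_W(t)=1$ in $\GW(k)\dlb t\drb$ for $j=1,2$, where $\sigma_Z(t):=\sum_n\chi^c(\Sym^n Z)\,t^n$. Cancelling the invertible series $\sigma_W(t)$ then yields the claim.

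The main obstacle is that the image of $\chi^c$ restricted to actual varieties is only a submonoid of $\GW(k)$, not obviously a subgroup, so such a $W$ may not exist a priori. Overcoming this is the technical heart of the forward direction. Possible routes are to stabilize by a fixed variety of large $\chi^c$ before applying the $W$-trick, to use Bittner's presentation to reduce to smooth projective $X_j$ where rigidity of $\chi^c$ (e.g.\ via Poincar\'e duality) is available, or to work directly at the level of virtual classes using the explicit Gusein-Zade--Luengo--Melle-Hernandez formula for $\mu_0$, applying Conjecture~\ref{conj:main} termwise to the effective symmetric products that appear.
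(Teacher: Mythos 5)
Your backward direction is correct and is essentially the paper's argument: specialize the compatibility to $(1-t)^{-[X]}$ and read off $\chi^c(\Sym^n X)=0$ from the coefficients. Your forward direction also has the right architecture --- descend $\mu_0$ along the surjection $\chi^c$ and reduce well-definedness to showing that $\ker\chi^c$ is preserved --- and this matches the paper's strategy (Lemma~\ref{lem:induced power structure} plus Proposition~\ref{prop:kernel of chi^c}). However, there are two genuine gaps.

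First, the crux you correctly identify --- that for a virtual class $M\in\ker\chi^c$ one needs $(1-t)^{M}\in 1+t\cdot\ker\chi^c\dlb t\drb$, and that your $W$-trick requires realizing $-\chi^c(X_1)$ by an honest variety --- is left unresolved: you flag it as ``the technical heart'' and list three speculative routes (stabilization, Bittner, termwise GZLMH) without carrying any of them out. Note that Bittner's presentation is unavailable in positive characteristic, so that route fails in the generality of the theorem. The paper closes this gap in two ways. Proposition~\ref{prop:chi^c surjective} shows that \emph{every} element of $\GW(k)$, including negatives of effective classes, is $\chi^c$ of an honest quasi-projective variety (built from $\Spec k(\sqrt a)$, punctured projective lines, $\mb{G}_m$, and products --- not from trace forms alone; your one-line claim that trace forms generate $\GW(k)$ additively is not justified and is likely false in general, since $\Tr_{k(\sqrt a)/k}\langle 1\rangle=\langle 2\rangle+\langle 2a\rangle$ only gives special rank-two forms). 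Independently, Proposition~\ref{prop:kernel of chi^c} sidesteps the need for a $W$ with prescribed negative Euler characteristic: it partitions $X$ into affine pieces $A_i\subseteq\mb{A}^{n_i}$, embeds each into $B_i:=\mb{A}^{2n_i}\setminus\{\mathrm{pt}\}$ with $\chi^c(B_i)=0$, and rewrites $M=[B]-[C\amalg Y]$ with $B$ and $C\amalg Y$ genuine varieties in $\ker\chi^c$; the case of a positive exponent $(1-t)^{+[Z]}$ is then handled by inverting the series $(1-t)^{-[Z]}$ coefficient by coefficient (Corollary~\ref{cor:sums of classes in kernel}). Either mechanism completes your sketch, but one of them must actually be proved.

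Second, a lesser point: by defining $\mu_\GW$ on arbitrary pairs $(B(t),N)$ you must also verify well-definedness in the first slot, and your claim that the series $(1-t^i)^{-[Y]}$ with $Y$ a variety generate $1+t\cdot K_0(\Var_k)\dlb t\drb$ multiplicatively is imprecise (the exponents in the infinite-product decomposition are general virtual classes). The paper avoids this entirely by working with pre-power structures: $\mu_\GW$ is only defined on the series $1-t^i$, which have canonical lifts, and the extension to all of $1+t\cdot\GW(k)\dlb t\drb$ together with the full compatibility is delegated to Propositions 1 and 2 of Gusein-Zade--Luengo--Melle-Hern\'andez. You should either adopt that reduction or supply the product decomposition carefully.
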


In fact, one can explicitly work out a power structure on $\GW(k)$ that matches the conjectural power structure $\mu_\GW$ (see \cite{PP23} for details). The difficulty in Conjecture~\ref{conj:main} is verifying the compatibility of $\mu_0$ and $\mu_\GW$ under the motivic Euler characteristic.

We also prove a few results that do not depend on Conjecture~\ref{conj:main}. For example, we give the following enrichment of G\"ottsche's formula for the Euler characteristic of the Hilbert scheme of points on a surface; this will be stated and proved in the body of the article as Theorem~\ref{thm:unconditional gottsche formula (second)}. An alternative form of this theorem, which depends on Conjecture~\ref{conj:main}, is given in Theorem~\ref{thm:enriched gottsche}.

\begin{thm}\label{thm:unconditional gottsche}
Let $k$ be a field of characteristic not 2. Then for any smooth quasi-projective $k$-surface $X$, we have
\begin{equation*}
\sum_{g\geq 0}\chi^c(\Hilb^g X)\cdot t^g=\prod_{n\geq 1}\left(1+\sum_{m\geq 1}\langle -1\rangle^{m(n-1)}\chi^c(\Sym^m X)\cdot t^{mn}\right)
\end{equation*}
in $\GW(k)\dlb t\drb$.
\end{thm}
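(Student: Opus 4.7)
My plan is to deduce the identity from the motivic G\"ottsche formula of Gusein-Zade--Luengo--Melle-Hern\'andez,
\begin{equation*}
\sum_{g\geq 0}[\Hilb^g X]\cdot t^g \;=\; \prod_{n\geq 1}\bigl(1-\mb{L}^{n-1}t^n\bigr)^{-[X]} \qquad \text{in }K_0(\Var_k)\dlb t\drb,
\end{equation*}
valid for any smooth quasi-projective surface $X$, where the right-hand side is interpreted using the power structure $\mu_0$. This identity is an unconditional statement in $K_0(\Var_k)$ (it does not depend on Conjecture~\ref{conj:main}), so it provides a safe starting point.

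First, I would unwind the right-hand side using the defining Kapranov-type relation $(1-t)^{-[Y]}=\sum_m[\Sym^m Y]t^m$ of the power structure $\mu_0$. Since each coefficient of $t^g$ in the product is a finite sum (only factors with $n\le g$ and terms with $mn\le g$ contribute), the substitution $t\mapsto \mb{L}^{n-1}t^n$ is permissible and yields
\begin{equation*}
\bigl(1-\mb{L}^{n-1}t^n\bigr)^{-[X]} \;=\; 1+\sum_{m\geq 1}[\Sym^m X]\,\mb{L}^{m(n-1)}\,t^{mn}.
\end{equation*}
Thus the motivic G\"ottsche formula becomes
\begin{equation*}
\sum_{g\geq 0}[\Hilb^g X]\cdot t^g \;=\; \prod_{n\geq 1}\Bigl(1+\sum_{m\geq 1}[\Sym^m X]\,\mb{L}^{m(n-1)}\,t^{mn}\Bigr).
\end{equation*}

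Next, I would apply $\chi^c$ to both sides. Since $\chi^c\colon K_0(\Var_k)\to \GW(k)$ is a ring homomorphism, it extends coefficient-wise to a ring homomorphism $K_0(\Var_k)\dlb t\drb\to\GW(k)\dlb t\drb$, and the finiteness observation above lets $\chi^c$ commute with the infinite product and the inner summations. The last ingredient is the identity $\chi^c(\mb{L})=\langle -1\rangle$, which follows from $\chi^c(\mb{P}^1)=\mb{H}=\langle 1\rangle+\langle -1\rangle$ together with the scissor relation $[\mb{P}^1]=[\mb{A}^1]+[\op{pt}]$. Substituting $\chi^c(\mb{L}^{m(n-1)})=\langle -1\rangle^{m(n-1)}$ reproduces the stated formula in $\GW(k)\dlb t\drb$.

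The main thing to check is that the motivic G\"ottsche formula is available in the required generality, since it is usually stated over $\mb{C}$ in the original source; however, the proof uses only the stratification of $\Hilb^g X$ by the Hilbert--Chow morphism together with formal manipulations in the power structure on $K_0(\Var_k)$, and goes through over any field of characteristic not $2$. I emphasize that the argument does \emph{not} attempt to identify the right-hand side with an expression of the form $\mu_\GW(A(t),\chi^c(X))$ in $\GW(k)\dlb t\drb$ --- that is exactly what the conditional Theorem~\ref{thm:enriched gottsche} would require --- we only push the $K_0(\Var_k)$-level identity forward term by term under the ring map $\chi^c$.
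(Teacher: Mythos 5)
Your overall route is the same as the paper's: start from the motivic G\"ottsche formula $\sum_g[\Hilb^gX]t^g=\prod_n(1-\mb{L}^{n-1}t^n)^{-[X]}$, expand each factor, and push forward along the ring homomorphism $\chi^c$. But there is a genuine gap at the expansion step. You justify
\[
\bigl(1-\mb{L}^{n-1}t^n\bigr)^{-[X]}=1+\sum_{m\geq 1}[\Sym^m X]\,\mb{L}^{m(n-1)}\,t^{mn}
\]
by saying "the substitution $t\mapsto\mb{L}^{n-1}t^n$ is permissible." It is not, as a formal matter: the power structure axioms only allow the substitution $t\mapsto t^i$ (property (vii) of Definition~\ref{def:power structure}); there is no axiom allowing $t\mapsto ct$ for a ring element $c$, and for a general power structure $(1-ct)^r$ is \emph{not} obtained from $(1-t)^r$ by substituting $ct$ for $t$. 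If you expand $(1-\mb{L}^{n-1}t^n)^{-[X]}$ honestly using the defining formula $\mu_\mr{eff}$, the coefficients you get are classes of quotients of the form $\bigl((\prod X^{b_i})-\Delta\bigr)\times\mb{A}^{(n-1)N}$ by $\prod S_{b_i}$, where the symmetric groups also permute the affine factors; these are affine-bundle-like quotients that are not Zariski-locally trivial over the base, so you cannot immediately pull out $\mb{L}^{m(n-1)}$, and the big diagonal you remove lives in $\prod X^{b_i}$ rather than in $\prod(\mb{A}^{n-1}\times X)^{b_i}$. The identity you want is genuinely geometric, not formal.

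The paper supplies exactly the missing ingredients: first the identity $(1-\mb{L}^{n-1}t^n)^{-[X]}=(1-t^n)^{-[\mb{A}^{n-1}\times X]}$, whose expansion produces $[\Sym^m(\mb{A}^{n-1}\times X)]$ (or rather $[\Sym^m_\Delta(\cdot)]$ --- note also that the defining relation of $\mu_0$ uses $\Sym^m_\Delta$, and $[\Sym^m_\Delta Y]=[\Sym^mY]$ is only established in $K_0^\uh(\Var_k)$, Proposition~\ref{prop:universally homeomorphic}); and second Totaro's theorem,
\[
[\Sym^m(\mb{A}^{n-1}\times X)]=[\Sym^m X]\cdot\mb{L}^{m(n-1)}\quad\text{in }K_0^\uh(\Var_k),
\]
which is the substantive input your "substitution" silently assumes. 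Both steps are carried out in $K_0^\uh(\Var_k)$ rather than $K_0(\Var_k)$, which is harmless because $\chi^c$ factors through $K_0^\uh(\Var_k)$ (Corollary~\ref{cor:chi universal homeo}), but this needs to be said in positive characteristic. With Totaro's theorem cited and the computation placed in $K_0^\uh(\Var_k)$, the rest of your argument (finite determinacy of each coefficient, coefficientwise application of the ring homomorphism $\chi^c$, and $\chi^c(\mb{L})=\langle-1\rangle$) is correct and matches the paper.
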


Along the way, we collect various properties of $\chi^c$. Many of these already exist in the literature. An example of a new result is that $\chi^c$ is insensitive to universal homeomorphisms (Corollary~\ref{cor:chi universal homeo}). Recently, Pajwani--Rohrbach--Viergever showed that cellular varieties, linear varieties, and elliptic curves satisfy Conjecture~\ref{conj:main} in characteristic 0 \cite{PRV24}. Corollary~\ref{cor:chi universal homeo} implies that their results extend to $\Char{k}\neq 2$.

\subsection{Motivation}\label{sec:motivation}
An influential result in enumerative geometry, formulated by Yau--Zaslow \cite{YZ96} and proved by Beauville \cite{Beauville:Yau-Zaslow}, gives the generating function of certain counts of rational curves on K3 surfaces. Given a projective K3 surface $X$, there exists $g\geq 1$ such that $X$ admits a $g$-dimensional linear system of curves of genus $g$. Denote the (appropriately weighted)\footnote{The weight of a nodal rational curve is $1$ so if all the rational curves in the linear series are nodal, the number is unweighted.} number of rational fibers of this linear system by $n(g)$. Set $n(0):=1$. The \textit{Yau--Zaslow formula} states that
\begin{equation}\label{eq:yau-zaslow}
    \sum_{g\geq 0}n(g)\cdot t^g=\prod_{n\geq 1}(1-t^n)^{-24}.
\end{equation}
As with many counts in algebraic geometry, Equation~\ref{eq:yau-zaslow} is only valid over $\mb{C}$. One can often recover enumerative theorems over the reals by counting with an appropriate sign. Kharlamov--R\u{a}sdeaconu gave a signed analog of the Yau--Zaslow formula over $\mb{R}$ \cite{KR15}. Each real rational curve has a Welschinger invariant $\pm 1$. Given a $g$-dimensional linear system of curves of genus $g$ on a real K3 surface $X$, let $n_+(g)$ and $n_-(g)$ denote the number of rational fibers with Welschinger invariant $+1$ and $-1$, respectively. Set $w(g):=n_+(g)-n_-(g)$. Then the \textit{real Yau--Zaslow formula} states that
\begin{equation}\label{eq:real yau-zaslow}
    \sum_{g\geq 0}w(g)\cdot t^g=\prod_{m\geq 1}(1+t^m)^{-e_\mb{R}}\cdot\prod_{n\geq 1}(1-t^{2n})^{-\frac{e_\mb{C}-e_\mb{R}}{2}},
\end{equation}
where $e_\mb{C}=24$ and $e_\mb{R}$ are the Euler characteristics of $X(\mb{C})$ and $X(\mb{R})$, respectively.

In \textit{enriched, quadratic,} or \textit{$\mb{A}^1$-enumerative geometry}, one applies tools from motivic homotopy theory to enumerative geometry over a wider class of base fields than just $\mb{C}$ or $\mb{R}$. Rather than integer-valued counts, enriched enumerative theorems are equations in the Grothendieck--Witt ring of the base field. Rank and signature of bilinear forms induce ring homomorphisms $\rank:\GW(\mb{C})\to\mb{Z}$ and $\sign:\GW(\mb{R})\to\mb{Z}$. Applying these two maps to an enriched enumerative equation then yields a classical count over $\mb{C}$ and a signed count over $\mb{R}$.

We now expand on the previous paragraph in the context of the Yau--Zaslow formula. Let $k$ be a field, possibly subject to some assumptions. Let $X$ be a K3 surface over $k$ that admits a $g$-dimensional linear system of curves of genus $g$. To each rational curve $C$, we would like to find an isomorphism class of bilinear forms $q(C)\in\GW(k)$ such that $\sign{q(C)}$ is the Welschinger invariant of $C$. Moreover, $\rank{q(C)}$ should recover the counting weight of $C$ over $\mb{C}$ (i.e.~1 if $C$ is nodal). Let $q(g)$ denote the sum of $q(C)$ over the rational fibers in the linear series on $X$. An \textit{enriched Yau--Zaslow formula} should be an equality
\begin{equation}\label{eq:abstract enriched yau-zaslow}
\sum_{g\geq 0}q(g)\cdot t^g=G(t),
\end{equation}
where $G(t)\in\GW(k)\dlb t\drb$ is a power series such that the rank and signature of Equation~\ref{eq:abstract enriched yau-zaslow} recover Equations~\ref{eq:yau-zaslow} and~\ref{eq:real yau-zaslow}, respectively. Applying other field invariants, such as discriminant or Hasse--Witt invariants, to Equation~\ref{eq:abstract enriched yau-zaslow} would then give Yau--Zaslow theorems in other interesting cases, such as over finite fields or number fields.

Pajwani--P\'al gave the first steps towards Equation~\ref{eq:abstract enriched yau-zaslow} \cite{PajwaniPal:YauZaslow}. Their formula is given in characteristic 0 and holds up to rank, signature, and discriminant. Our results streamline various steps in op.~cit., as well as give a conjectural approach to the global side of Yau--Zaslow formulae in positive characteristic. We will not address the local contributions of Yau--Zaslow formulae in this article.

The general roadblock to proving Conjecture~\ref{conj:main} and treating the local terms of the Yau--Zaslow formula over fields other than $\mb{C}$ and $\mb{R}$ is the unwieldy nature of $\chi^c$ over \'etale-locally trivial fibrations. In contrast, compactly supported Euler characteristics factor over \'etale- or analytic-locally trivial fibrations, a fact which is crucial to proving Yau--Zaslow formulae over algebraically closed fields \cite{Beauville:Yau-Zaslow} or $\mb{R}$ \cite{KR15}.

\subsection{Outline}
The first portion of this article is devoted to discussing the motivic Euler characteristic $\chi^c$. We recall the definition of $\chi^c$ over fields of characteristic not 2 in Section~\ref{sec:euler char}. The motivic Euler characteristic was first defined in characteristic 0 in \cite{Rondigs,AMBOWZ:CompactEuler}, and later over perfect fields of characteristic not 2 in \cite{LPLS24,Azouri}. However, we will take a slightly different approach than that of \cite{LPLS24,Azouri}, which will make it easier to study $\chi^c$ as a motivic measure. We will also remove the perfectness assumption on $k$ by applying results of Bachmann--Hoyois \cite{BachmannHoyois} and Elmanto--Khan \cite{ElmantoKhan}.

In Sections~\ref{sec:properties and computations}, \ref{sec:etale with compact support}, and~\ref{sec:modifications}, we collect and prove some properties of the motivic Euler characteristic. We then recall the power structure on $K_0(\Var_k)$ in Section~\ref{sec:power structure K0}, discuss Conjecture~\ref{conj:main} and power structures on $\GW(k)$ in Section~\ref{sec:power structure gw}, and apply our results to G\"ottsche's formula for Hilbert schemes in Section~\ref{sec:gottsche}. Finally, we prove Conjecture~\ref{conj:main} over pythagorean fields in Section~\ref{sec:pythagorean}.

\subsection{Related work}
Some details from this work are present in \cite{PP23}, which was split off from an earlier collaboration. In that article, the authors construct a power structure on $\GW(k)$ that conjecturally matches the one predicted in Theorem~\ref{thm:power structure}. The key difficulty is verifying that this power structure is compatible with the power structure on $K_0(\Var_k)$ and $\chi^c$. They show that these two power structures agree under $\chi^c$ on varieties of dimension 0. In \cite{PRV24}, a larger class of varieties is given on which $\chi^c$ takes the power structure of $K_0(\Var_k)$ to the power structure of $\GW(k)$.

\subsection*{Acknowledgements}
We thank Elden Elmanto, Marc Levine, Davesh Maulik, and Ambrus P\'al for helpful conversations. We thank Jesse Pajwani for helpful conversations, feedback, and for catching a few minor errors in the first version of this paper.

DB and SM each received support from an NSF MSPRF grant (DMS-1803124 and DMS-2202825, respectively). DB is partially supported by DMS-2401483. 

\section{Motivic Euler characteristic}\label{sec:euler char}
In this section, we recall the definition of the \textit{motivic} or \textit{compactly supported Euler characteristic}
\[\chi^c:K_0(\Var_k)\to\GW(k).\]
This definition will hold over fields of characteristic not 2. The starting point is the \textit{categorical Euler characteristic} $\chi^\cat$ in motivic homotopy theory, first defined by Levine \cite{Levine:Euler} (building on the work of Hoyois \cite{Hoyois-quadratic}).

The role of the Euler characteristic in Beauville's approach to the Yau--Zaslow formula cruicially depends on $\chi$ being a motivic measure. The motivic Euler characteristic $\chi^c$ is the result of forcing $\chi^\cat$ to become a motivic measure. In characteristic 0, this can be done explicitly using Bittner's presentation of $K_0(\Var_k)$ \cite{Rondigs,AMBOWZ:CompactEuler}. In characteristic not 2, one can define $\chi^c$ categorically \cite{LPLS24,Azouri}. We will give a slightly different approach than that of \cite{LPLS24,Azouri}, focusing on the nature of $\chi^c$ as a motivic measure. The construction of $\chi^c$ in \cite{LPLS24,Azouri} comes with an assumption that $k$ is perfect; we will remove this assumption.

\subsection{Background}
Let $S$ be a scheme. Let $\SH(S)$ be the stable motivic homotopy category over $S$. The category $\SH(S)$ is symmetric monoidal, where the monoidal structure is given by a smash product $\wedge:=\wedge_S$ and the unit object is the sphere spectrum $\1_S$. In this context, one can speak of strongly dualizable objects (see e.g.~\cite[p.~257]{Levine:Euler}).

\begin{defn}
The \textit{dual} of a motivic spectrum $X\in\SH(S)$ is the object
\[X^\vee:=\Hom_S(X,\1_S).\] 
This gives rise to the evaluation map $\mr{ev}:X^\vee\wedge X\to\1_S$, which induces a map $\mr{can}_A:X^\vee\wedge A\to\Hom_S(X,A)$ for each $A\in\SH(S)$. We say that $X$ is \textit{strongly dualizable} if $\mr{can}_X:X^\vee\wedge X\to\Hom_S(X,X)$ is an isomorphism.
\end{defn}

With this definition in hand (and building on \cite{Hoyois-quadratic}), Levine defines the Euler characteristic of a strongly dualizable motivic spectrum via the categorical definition~\cite[Definition 1.3]{Levine:Euler}.

\begin{defn}
Let $X\in\SH(S)$ be strongly dualizable. The \textit{categorical Euler characteristic} of $X$ is the endomorphism
\[\1_S\to X\wedge X^\vee\to X^\vee\wedge X\to\1_S,\]
denoted $\chi^\cat_S(X)\in\End(\1_S)$. When $X$ is an $S$-variety, we denote $\chi^\cat_S(X):=\chi^\cat_S(\Sigma^\infty_{\mb{P}^1}X_+)$. We will generally omit the base $S$ from the notation unless confusion about the base may arise.
\end{defn}

When $S=\Spec{k}$ is the spectrum of a field, Morel's $\mb{A}^1$-Brouwer degree theorem~\cite[Corollary 1.24]{Morel:A1-AlgebraicTopology} gives an isomorphism $\mr{End}(\1_k)\cong\GW(k)$, so that $\chi^\cat(X)\in\GW(k)$. Under the further assumption that $k$ is perfect of characteristic not 2, Levine--Raksit prove that $\chi^\cat(X)$ can be computed in terms of the de Rham complex for any smooth projective $k$-scheme $X$~\cite{Levine-Raksit}. This was extended to smooth proper schemes over arbitrary fields by \cite[Theorem~1.1]{BachmannWickelgren}

\begin{defn}
Let $k$ be a field. Given a smooth proper $k$-scheme $X$ of dimension $n$, let $\H^i(X;\Omega^j_{X/k})$ denote the Hodge cohomology groups of $X$ for $0\leq i,j\leq n$. By taking 0 maps as differentials, we get a perfect complex
\[\Hdg(X):=\bigoplus_{i,j=0}^n \H^i(X;\Omega^j_{X/k})[j-i]\]
of $k$-vector spaces. Coherent duality defines a trace map $\eta:\H^n(X;\Omega^n_{X/k})\to k$, so we can use the cup product to define perfect pairings
\[\beta_{i,j}:\H^i(X;\Omega^j_{X/k})\otimes\H^{n-i}(X;\Omega^{n-j}_{X/k})\xrightarrow{\smile}\H^n(X;\Omega^n_{X/k})\xrightarrow{\eta}k.\]
The sum $\sum_{i,j=0}^n(-1)^{i+j}\beta_{i,j}$ determines a non-degenerate symmetric bilinear form on $\Hdg(X)$ (see \cite[\S 8D]{Levine-Raksit} or \cite[\S 1]{BachmannWickelgren}). Denote the isomorphism class of this bilinear form by
\[\chi^\dR(X)\in\GW(k).\]
\end{defn}

\begin{thm}[Levine--Raksit, Bachmann--Wickelgren]\label{thm:LevineRaksit}
If $X$ is a smooth proper scheme over a field $k$, then $\chi^\cat(X)=\chi^\dR(X)$.
\end{thm}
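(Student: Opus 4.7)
The plan is to first reduce to the case where $k$ is a perfect field, then handle smooth projective varieties following Levine--Raksit, and finally extend to the smooth proper case following Bachmann--Wickelgren. For the reduction step, observe that the formation of $\chi^\dR(X)$ is compatible with base change along $k \hookrightarrow k^{\mathrm{perf}}$, since Hodge cohomology, the cup product, and the coherent duality trace all base change flatly. The analogous statement for $\chi^\cat$ follows from stability of $\SH(-)$ under base change together with Morel's identification $\End(\1) \cong \GW$. Combined with the injectivity of $\GW(k) \to \GW(k^{\mathrm{perf}})$ in characteristic not $2$, this lets us assume $k$ is perfect.

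For the smooth projective case, I would follow Levine--Raksit and identify both sides with a categorical trace on Hochschild homology. The key point is that a suitable realization functor from $\SH(k)$ carries $\Sigma^\infty_{\mb{P}^1} X_+$ to a dualizable object of a derived category in which the monoidal trace is computable, and the HKR isomorphism
\[ HH(X/k) \;\simeq\; \bigoplus_{i,j} \H^i(X;\Omega^j_{X/k})[j-i] \]
identifies the target with $\Hdg(X)$. Under this identification, the duality used to form the categorical trace becomes Serre/coherent duality, and the trace diagram
\[ \1 \to X \wedge X^\vee \to X^\vee \wedge X \to \1 \]
unpacks, via Hirzebruch--Riemann--Roch style bookkeeping, to precisely the alternating sum of pairings $\sum_{i,j}(-1)^{i+j}\beta_{i,j}$ defining $\chi^\dR(X)$. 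Concretely, one matches the abstract trace with the image of the diagonal under the self-pairing on $\Hdg(X)$.

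To pass from smooth projective to smooth proper as in Bachmann--Wickelgren, apply Chow's lemma to obtain a birational $\pi \colon \tilde X \to X$ with $\tilde X$ smooth projective, and blow up further if needed to arrange $\pi$ to be a sequence of blow-ups in smooth centers. Both $\chi^\cat$ and $\chi^\dR$ satisfy the same blow-up formula (for $\chi^\cat$ this follows from the motivic projective bundle formula applied iteratively, and for $\chi^\dR$ it is the classical Hodge-theoretic blow-up formula together with the compatibility of Serre duality under $\pi_\ast$). Since the projective case gives $\chi^\cat(\tilde X) = \chi^\dR(\tilde X)$ as well as the same equality for the exceptional strata (all smooth projective), the blow-up relations let one deduce $\chi^\cat(X) = \chi^\dR(X)$.

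The main obstacle is the middle step: matching the abstract categorical trace in $\SH(k)$ with the explicit Serre duality pairing on $\Hdg(X)$ after HKR. This requires checking that the motivic realization respects not only dualizability but also the duality data (coevaluation and evaluation) in a way compatible with coherent duality on $X$. All other steps are essentially formal given the right naturality properties of realizations and the standard blow-up formulas on both sides.
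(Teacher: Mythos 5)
The paper does not prove this statement: it is imported from the literature, with the smooth projective case over perfect fields credited to Levine--Raksit and the extension to smooth proper schemes over arbitrary fields credited to Bachmann--Wickelgren. So there is no in-paper argument to compare yours against; what you have written is a sketch of how one might prove the cited result, and it should be judged on its own terms.

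On those terms there are two genuine gaps. First, the middle step as described cannot produce the theorem. A symmetric monoidal realization of $\SH(k)$ into an ordinary derived category $D(k)$ sends the categorical trace $\chi^\cat(X)\in\End(\1_k)\cong\GW(k)$ to an element of $\End(k)=k$, i.e.\ it only remembers the rank (the alternating sum of Hodge numbers). The entire content of the theorem is the quadratic refinement, and HKR plus ``Hirzebruch--Riemann--Roch bookkeeping'' in $D(k)$ discards exactly that information. The actual arguments retain the symmetric bilinear data by working with a Hermitian/Poincar\'e structure: Levine--Raksit prove a motivic Gauss--Bonnet formula for $SL$-oriented ring spectra and evaluate it on Hermitian K-theory, where the pushforward of the Euler class of $T_X$ is computed via Grothendieck--Serre duality to be the Hodge form $\sum_{i,j}(-1)^{i+j}\beta_{i,j}$; this is not a trace computation in a plain derived category. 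Second, your passage from projective to proper via Chow's lemma plus a factorization of $\pi$ into blow-ups along smooth centers requires resolution of singularities and weak factorization, hence characteristic $0$, while the theorem is asserted over arbitrary fields. (The blow-up and projective bundle formulas themselves are available on both sides, but you cannot arrange their hypotheses in positive characteristic.) The cited route avoids this entirely: Atiyah duality, and hence the Gauss--Bonnet argument, applies directly to smooth \emph{proper} schemes, so no reduction to the projective case is needed. Your reduction to perfect fields via injectivity of $\GW(k)\to\GW(k^{\mathrm{perf}})$ is reasonable in odd characteristic, where purely inseparable extensions induce isomorphisms on $\GW$, but the theorem as stated carries no characteristic hypothesis, so characteristic $2$ would need a separate treatment.
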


Theorem~\ref{thm:LevineRaksit} enables one to compute the categorical Euler characteristic of smooth proper schemes using more classical invariants. Unfortunately, outside of the set of smooth proper schemes, the categorical Euler characteristic fails to satisfy a desirable property: $\chi^\cat$ is not a motivic measure. That is, if $X$ is a variety with closed subvariety $Z$, then $\chi^\cat(X)\neq\chi^\cat(Z)+\chi^\cat(X\backslash Z)$ in general~\cite[Proposition 2.4 (3)]{Levine-quadratic}. However, over fields of characteristic 0, the restriction of $\chi^\cat$ to smooth projective varieties extends to a motivic measure~\cite[Theorem 1.13]{AMBOWZ:CompactEuler}. Moreover, this extension is unique~\cite[Theorem 2.10]{PajwaniPal:YauZaslow}.

\begin{thm}[Arcila-Maya--Bethea--Opie--Wickelgren--Zakharevich, Pajwani--P\'al]
Let $k$ be a field of characteristic 0. There is a unique ring homomorphism
\[\chi^\mot:K_0(\Var_k)\to\GW(k)\]
such that $\chi^\mot([X])=\chi^\cat(X)$ (or equivalently, $\chi^\mot([X])=\chi^\dR(X)$) for all smooth projective irreducible $k$-varieties.
\end{thm}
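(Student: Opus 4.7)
The plan is to use Bittner's presentation of $K_0(\Var_k)$ in characteristic zero: $K_0(\Var_k)$ is generated as an abelian group by classes $[X]$ of smooth projective $k$-varieties, subject to the trivial relation $[\emptyset]=0$ and the blowup relation $[\mathrm{Bl}_Y X]-[E]=[X]-[Y]$ for each smooth closed subvariety $Y\subseteq X$ with exceptional divisor $E$. This presentation immediately yields uniqueness: any ring homomorphism $\chi^\mot:K_0(\Var_k)\to\GW(k)$ agreeing with $\chi^\cat$ on smooth projective varieties is determined on generators, hence everywhere.

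For existence, I would declare $\chi^\mot([X]):=\chi^\cat(X)$ on the Bittner generators and verify two points. First, the assignment is multiplicative with respect to products: because $\chi^\cat$ is a symmetric-monoidal trace and $\Sigma^\infty_{\mathbb{P}^1}(X\times Y)_+\simeq\Sigma^\infty_{\mathbb{P}^1}X_+\wedge\Sigma^\infty_{\mathbb{P}^1}Y_+$, the identity $\chi^\cat(X\times Y)=\chi^\cat(X)\cdot\chi^\cat(Y)$ holds automatically. Second, the assignment must respect the blowup relation, which is the only substantive content.

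To verify the blowup relation, I would appeal to the motivic blowup square: for $Y\subseteq X$ a smooth closed subvariety, the cartesian square formed by $E$, $\mathrm{Bl}_Y X$, $Y$, and $X$ is cdh-distinguished and hence cocartesian in $\SH(k)$ (by Morel--Voevodsky purity combined with the projective bundle formula, or equivalently by cdh descent). This produces a cofiber sequence of strongly dualizable motivic spectra
\[\Sigma^\infty_{\mathbb{P}^1} E_+\to\Sigma^\infty_{\mathbb{P}^1}(\mathrm{Bl}_Y X)_+\oplus\Sigma^\infty_{\mathbb{P}^1} Y_+\to\Sigma^\infty_{\mathbb{P}^1} X_+,\]
and additivity of $\chi^\cat$ on cofiber sequences of dualizable objects yields $\chi^\cat(\mathrm{Bl}_Y X)-\chi^\cat(E)=\chi^\cat(X)-\chi^\cat(Y)$ in $\GW(k)$. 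As a backup, one can argue Hodge-theoretically using $\chi^\cat=\chi^\dR$ (Theorem~\ref{thm:LevineRaksit}) together with the classical Hodge-cohomology decomposition of a blowup, carefully tracking the Poincar\'e trace pairings on each summand.

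The main obstacle is precisely this quadratic refinement: the stable splittings in $\SH(k)$ and the Hodge-theoretic blowup decomposition are classical, but one must confirm that they induce an equality of isomorphism classes of \emph{symmetric bilinear forms} in $\GW(k)$, not merely of ranks. The characteristic-zero hypothesis enters in two places: in the validity of Bittner's presentation, which rests on resolution of singularities and weak factorization, and indirectly through the availability of clean Hodge-theoretic input if one prefers the second approach.
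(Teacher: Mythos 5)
The paper does not prove this theorem itself: it is quoted as a citation, with existence attributed to \cite[Theorem 1.13]{AMBOWZ:CompactEuler} and uniqueness to \cite[Theorem 2.10]{PajwaniPal:YauZaslow}. Your proposal is correct and is essentially a reconstruction of the argument in those references: Bittner's presentation gives uniqueness for free and reduces existence to checking the blow-up relation, which you verify via the fact that a smooth blow-up square becomes cocartesian in $\SH(k)$, so that $\Sigma^\infty_{\mb{P}^1}E_+\to\Sigma^\infty_{\mb{P}^1}(\mathrm{Bl}_YX)_+\oplus\Sigma^\infty_{\mb{P}^1}Y_+\to\Sigma^\infty_{\mb{P}^1}X_+$ is a cofiber sequence of dualizable objects and May's additivity of traces \cite{May01} (the same input the paper uses in Lemma~\ref{lem:rigid Euler}) gives the relation in $\End(\1_k)\cong\GW(k)$. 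One remark on your closing paragraph: the ``quadratic refinement'' you flag as the main obstacle is not actually an issue in this categorical route, since $\chi^\cat$ is already valued in $\End(\1_k)\cong\GW(k)$ and May's theorem is an identity there, not merely an identity of ranks; the refinement would only become a genuine concern if you pursued the Hodge-theoretic backup via $\chi^\dR$, where one must compare the bilinear pairings on the blow-up decomposition and not just dimensions. Two cosmetic points: in Bittner's relation one should take $Y$ a \emph{proper} smooth closed subvariety of $X$, and multiplicativity on generators suffices because Bittner's isomorphism is one of rings with $[X]\cdot[Y]=[X\times Y]$.
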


The \textit{motivic Euler characteristic} $\chi^\mot$ appeared previously in the work of R\"ondigs \cite[Theorem 5.2]{Rondigs}. The assumption that $\Char{k}=0$ allows one to use Bittner's presentation of $K_0(\Var_k)$, which is a crucial aspect of \cite{AMBOWZ:CompactEuler,PajwaniPal:YauZaslow}.

\subsection{Extending the motivic Euler characteristic}
We now extend the definition of $\chi^\mot$ to more general bases by mimicking the categorical definition. We will start by working over a noetherian scheme $S$, which we will eventually assume to be the spectrum of a field. We will also eventually invert the characteristic of the base field whenever resolution of singularities is not known. Our extension of $\chi^\mot$ to fields of characteristic at least 3 recovers Levine--Pepin Lehalleur--Srinivas's and Azouri's generalization of the compactly supported Euler characteristic in this setting \cite[Section 2]{Azouri}, although at times we take a different route towards the desired definition. We also remove their assumption that $k$ is perfect.

To begin, we need to introduce two subcategories of $\SH(S)$. 

\begin{defn}
A motivic spectrum $X\in\SH(S)$ is \textit{compact} if $\Hom_S(X,-)$ commutes with arbitrary direct sums. Let
\begin{enumerate}[(i)]
\item $\SH^\omega(S)$ be the full subcategory of $\SH(S)$ whose objects are compact, and
\item $\SH^\rig(S)$ be the full subcategory of $\SH(S)$ whose objects are strongly dualizable.
\end{enumerate}
\end{defn}

By~\cite{Riou:SpanierWhitehead}, there is an inclusion of subcategories
\begin{equation}\label{eq:Riou}
\SH^\rig(S)\subseteq\SH^\omega(S),
\end{equation}
which is an equality when $S=\Spec{k}$ for $k$ a field admitting resolution of singularities. In general, this inclusion may be strict. For example, $\SH^\rig(S)\neq\SH^\omega(S)$ when $S$ is noetherian of positive dimension~\cite{Cisinski:mathoverflow}.

The key insight that allows us to define the motivic Euler characteristic categorically is that there is a unique realization map from $K_0(\Var_S)$ to the Grothendieck ring of $\SH^\omega(S)$ \cite[Chapter 2, Lemma 3.7.3]{CLNS:MotivicIntegration}. We will state this more precisely in Lemma~\ref{lem:motivic realization}.

\begin{defn}
Let $\mc{C}$ be a triangulated category with a biadditive tensor product $\otimes$. The \textit{Grothendieck group of $\mc{C}$}, denoted $K_0^\triangle(\mc{C})$, is the quotient of free abelian group generated by the set of isomorphism classes of objects in $\mc{C}$ by the set of all objects of the form
\[[X]-[Y]+[Z],\]
where $X\to Y\to Z\to X[1]$ is a distinguished triangle. The Grothendieck group can be promoted to the \textit{Grothendieck ring of $\mc{C}$}, also denoted $K_0^\triangle(\mc{C})$, by imposing the relation $[X\otimes Y]=[X]\cdot[Y]$ for all objects $X,Y$ in $\mc{C}$.
\end{defn}

\begin{rem}
Recall that $\SH(S)$ admits the structure of a triangulated category, where the distinguished triangles are given by (co)fiber sequences. The fiber product $\times_S$ is biadditive with respect to the smash product $\wedge_S$, so we can take the Grothendieck ring of $\SH(S)$. Compact objects in a triangulated category are stable under extensions and retracts, and \cite[Theorem~0.1]{May01} implies the same for strongly dualizable objects. Thus both $\SH^\omega(S)$ and $\SH^\rig(S)$ are thick subcategories of $\SH(S)$, so the triangulated structure on $\SH(S)$ descends to triangulated structures on $\SH^\omega(S)$ and $\SH^\rig(S)$. In particular, we can form $K_0^\triangle(\SH^\omega(S))$ and $K_0^\triangle(\SH^\rig(S))$.
\end{rem}

\begin{lem}[Chambert-Loir--Nicaise--Sebag]\label{lem:motivic realization}
There is a unique ring homomorphism
\[r_\mot:K_0(\Var_S)\to K_0^\triangle(\SH^\omega(S))\]
such that $r_\mot([X])=[q_!\1_X]$ for all quasi-projective varieties $q:X\to S$ (as the classes of such varieties generate $K_0(\Var_S)$ \cite[Chapter 2, Corollary 2.6.6(a)]{CLNS:MotivicIntegration}).
\end{lem}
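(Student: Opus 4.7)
The plan is to check that the assignment $[X] \mapsto [q_! \1_X]$ respects the defining relations of $K_0(\Var_S)$: the scissor relation and multiplicativity under product. Uniqueness is automatic, because the cited CLNS corollary says classes of quasi-projective $S$-varieties generate $K_0(\Var_S)$, so any ring homomorphism on $K_0(\Var_S)$ is determined by its values on generators.

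The first step is to verify that $q_! \1_X$ is actually compact for quasi-projective $q : X \to S$, so that the assignment lands in $K_0^\triangle(\SH^\omega(S))$. I would factor $q = \bar q \circ j$ with $j$ an open immersion and $\bar q$ projective. The sphere spectrum $\1_X$ is compact; the left adjoint $j_! = j_\#$ preserves compact objects, since its right adjoint $j^*$ commutes with arbitrary colimits; and $\bar q_! = \bar q_*$ preserves compact objects for projective morphisms, by the six-functor package on $\SH$. Composing gives compactness of $q_! \1_X$.

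Next, I would verify the scissor relations. Given a closed immersion $i : Z \hookrightarrow X$ with open complement $j : U \hookrightarrow X$, there is a localization cofiber sequence
\[j_! \1_U \longrightarrow \1_X \longrightarrow i_* \1_Z\]
in $\SH(X)$. Applying the triangulated functor $q_!$ and using $q_! \circ j_! = (qj)_!$ and $q_! \circ i_* = (qi)_!$, we obtain a distinguished triangle in $\SH^\omega(S)$ that yields $[q_! \1_X] = [(qj)_! \1_U] + [(qi)_! \1_Z]$, matching $[X] = [U] + [Z]$. For multiplicativity, given quasi-projective $p : X \to S$ and $q : Y \to S$ with projections $\pi_X, \pi_Y$ from $X \times_S Y$, proper base change identifies $(\pi_X)_! \1_{X \times_S Y} \simeq p^* q_! \1_Y$, and then the projection formula gives
\[(p\pi_X)_! \1_{X \times_S Y} \simeq p_! \bigl(p^* q_! \1_Y\bigr) \simeq p_! \1_X \wedge_S q_! \1_Y,\]
matching $[X \times_S Y] = [X] \cdot [Y]$.

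The main obstacle I expect is the compactness verification. Since $S$ is only assumed noetherian (and not necessarily the spectrum of a field admitting resolution of singularities), the containment $\SH^\rig(S) \subseteq \SH^\omega(S)$ of \eqref{eq:Riou} may be strict, so one cannot simply reduce to strong dualizability. One must instead invoke preservation of compact objects under $f_!$ for separated morphisms of finite type between noetherian schemes, which is precisely the input forcing the target of $r_\mot$ in the lemma to be $K_0^\triangle(\SH^\omega(S))$ rather than the smaller $K_0^\triangle(\SH^\rig(S))$.
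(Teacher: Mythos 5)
Your argument is correct in outline, but it takes a genuinely different route from the paper, because the paper does not prove this lemma at all: it is quoted directly from Chambert-Loir--Nicaise--Sebag, and the only supplementary content supplied is the remark immediately following, which (a) identifies the target $K_0^\triangle(\SH^c(S))$ of op.~cit.\ with $K_0^\triangle(\SH^\omega(S))$ using the fact that $\SH(S)$ is constructibly generated, so constructible equals compact, and (b) checks that $q_!\1_X$ is compact by citing that $q_!$ preserves compact objects for separated finite-type morphisms. You instead reprove the statement from scratch. Your compactness argument via the factorization $q=\bar q\circ j$ is a correct unwinding of that citation; your verification of the scissor relation via the localization triangle $j_!\1_U\to\1_X\to i_*\1_Z$ together with $i_*\simeq i_!$ for the closed immersion, and of multiplicativity via base change plus the projection formula, are exactly the computations underlying the CLNS lemma, so what your approach buys is self-containedness at the cost of redoing their work. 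One point to tighten: for \emph{existence} you need more than the fact that quasi-projective classes generate $K_0(\Var_S)$ --- you need that the scissor relations among quasi-projective varieties actually \emph{present} $K_0(\Var_S)$ (equivalently, that the canonical map from the Grothendieck ring built only out of quasi-projective varieties is an isomorphism). Generation alone gives uniqueness, as you say, but it does not license defining a homomorphism by checking relations only on quasi-projective inputs; that presentation statement is part of what the cited CLNS corollary provides and should be invoked explicitly.
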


\begin{rem}
In \cite[Chapter 2, Lemma 3.7.3]{CLNS:MotivicIntegration}, the target of $r_\mot$ is actually $K_0^\triangle(\SH^c(S))$, where $\SH^c(S)\subseteq\SH^\omega(S)$ is the subcategory of \textit{constructible} motivic spectra. However, $\SH(S)$ is constructibly generated (and therefore compactly generated) by \cite[Proposition C.12]{Hoyois-quadratic}, so the constructible objects in $\SH(S)$ are precisely the compact objects \cite[Proposition 1.4.11]{CisinskiDeglise}. It follows that $\SH^c(S)=\SH^\omega(S)$.

Note that for Lemma~\ref{lem:motivic realization} to make sense, we need $q_!\1_X$ to be compact whenever $q:X\to S$ is quasi-projective. Quasi-projective implies separated of finite type \cite[\href{https://stacks.math.columbia.edu/tag/01VX}{Lemma 01VX}]{stacks}, so $q_!$ preserves compact objects \cite[Corollary~4.2.12]{CisinskiDeglise} (the assumption that $S$ is noetherian can be dropped, see e.g.~\cite{Kha21}). Since $\1_X\in\SH(X)$ is compact, it follows that $q_!\1_X$ is compact as well.
\end{rem}

\begin{notn}
Given a set of primes $\mc{P}$, let $\SH(S)[\mc{P}^{-1}]$ denote the localization of $\SH(S)$ at the maps $p:E\to E$ for all $E\in\SH(S)$ and $p\in\mc{P}$. If $\mc{P}=\{p\}$, then we will write $\SH(S)[p^{-1}]:=\SH(S)[\mc{P}^{-1}]$. We denote the localization functor by
\[L_\mc{P}:\SH(S)\to\SH(S)[\mc{P}^{-1}].\]
Since localization simply introduces more morphisms to our category, we can restrict to compact or strongly dualizable objects to obtain localization maps $L_\mc{P}:\SH^\bullet(S)\to\SH^\bullet(S)[\mc{P}^{-1}]$, where $\bullet=\omega$ or $\rig$.
\end{notn}

\begin{cor}
For any set of primes $\mc{P}$, there is a unique ring homomorphism
\[r_\mot:K_0(\Var_S)\to K_0^\triangle(\SH^\omega(S)[\mc{P}^{-1}])\]
such that $r_\mot([X])=[q_!\1_X]$ for all quasi-projective varieties $q:X\to S$.
\end{cor}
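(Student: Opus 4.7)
The plan is to reduce the corollary to Lemma~\ref{lem:motivic realization} by post-composing the ring homomorphism it produces with the localization functor $L_\mc{P}$. First I would invoke Lemma~\ref{lem:motivic realization} to obtain the unique ring homomorphism $r_\mot:K_0(\Var_S)\to K_0^\triangle(\SH^\omega(S))$ sending $[X]\mapsto[q_!\1_X]$ on quasi-projective $q:X\to S$.

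Next, I would observe that the localization $L_\mc{P}:\SH(S)\to\SH(S)[\mc{P}^{-1}]$ is a symmetric monoidal, triangulated (i.e.~exact) functor — it only formally inverts the multiplication-by-$p$ endomorphisms for $p\in\mc{P}$, so cofiber sequences and the smash product descend to the localization. Restricting to compact objects, $L_\mc{P}$ thus descends to a symmetric monoidal exact functor $\SH^\omega(S)\to\SH^\omega(S)[\mc{P}^{-1}]$, which induces a ring homomorphism on Grothendieck rings
\[(L_\mc{P})_*:K_0^\triangle(\SH^\omega(S))\to K_0^\triangle(\SH^\omega(S)[\mc{P}^{-1}]),\qquad [E]\mapsto[L_\mc{P}E].\]
Setting $r_\mot^\mc{P}:=(L_\mc{P})_*\circ r_\mot$ gives a ring homomorphism which on quasi-projective $q:X\to S$ sends $[X]$ to the class of $L_\mc{P}(q_!\1_X)$, that is, to $[q_!\1_X]$ in the localized Grothendieck ring, as required.

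For uniqueness, note that by \cite[Chapter 2, Corollary 2.6.6(a)]{CLNS:MotivicIntegration} the classes $[X]$ of quasi-projective $S$-varieties generate $K_0(\Var_S)$ as a group, so any ring homomorphism out of $K_0(\Var_S)$ is determined by its values on such classes. Hence any ring homomorphism with the prescribed values on quasi-projective varieties must agree with $r_\mot^\mc{P}$.

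There is no real obstacle here: the only points requiring any care are that $L_\mc{P}$ is both triangulated and symmetric monoidal (needed to induce a ring map on Grothendieck rings), and that restricting $L_\mc{P}$ to compact objects lands in $\SH^\omega(S)[\mc{P}^{-1}]$, which is built into the notation introduced just before the statement.
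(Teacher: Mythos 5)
Your proof is correct and follows essentially the same route as the paper: both construct the map by composing $r_\mot$ from Lemma~\ref{lem:motivic realization} with the ring homomorphism on Grothendieck rings induced by the exact, symmetric monoidal localization functor $L_\mc{P}$. The only (immaterial) difference is in the uniqueness step, where you appeal directly to the fact that classes of quasi-projective varieties generate $K_0(\Var_S)$, while the paper invokes the universal property of the localization; both are valid.
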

\begin{proof}
It suffices to construct a ring homomorphism 
\[L_\mc{P}:K_0^\triangle(\SH^\omega(S))\to K_0^\triangle(\SH^\omega(S)[\mc{P}^{-1}])\]
taking the desired values on quasi-projective varieties. On objects, this map is given by $L_\mc{P}([X])=[L_\mc{P}X]$. This is well-defined, as two isomorphic motivic spectra remain isomorphic after localization. Localization is an exact functor, which implies that $L_\mc{P}$ preserves:
\begin{itemize}
    \item exact triangles, so $L_\mc{P}X\to L_\mc{P}Y\to L_\mc{P}Z$ is a (co)fiber sequence in $\SH^\omega(S)[\mc{P}^{-1}]$ for any (co)fiber sequence $X\to Y\to Z$ in $\SH^\omega(S)$, and 
    \item finite limits, so that $L_\mc{P}(X\times_S Y)\cong L_\mc{P}X\times_{L_\mc{P}S}L_\mc{P}Y$. 
\end{itemize}
It follows that $L_\mc{P}$ is a ring homomorphism. The localization $\SH^\omega(S)\to\SH^\omega(S)[\mc{P}^{-1}]$ is unique up to unique isomorphism by the universal property of localizations, so the induced map $L_\mc{P}$ on Grothendieck rings is uniquely determined. It now follows from Lemma~\ref{lem:motivic realization} that
\[K_0(\Var_S)\xrightarrow{r_\mot}K_0^\triangle(\SH^\omega(S))\xrightarrow{L_\mc{P}}K_0^\triangle(\SH^\omega(S)[\mc{P}^{-1}])\]
is uniquely determined. To simplify notation, we will generally conflate $r_\mot$ and $L_\mc{P}\circ r_\mot$.
\end{proof}

Our next goal is to show that there is a unique Euler characteristic on $K_0^\triangle(\SH^\omega(S))$ that satisfies the expected categorical definition on $K_0^\triangle(\SH^\rig(S))$. We will begin by constructing a ring homomorphism $K_0^\triangle(\SH^\rig(S))\to\End(\1_S)$ (as well as localizations thereof). We will then conclude with results of Riou and Elmanto--Khan~\cite{Riou:SpanierWhitehead,ElmantoKhan}, which equate certain localizations of $\SH^\omega(k)$ and $\SH^\rig(k)$.

\begin{lem}\label{lem:rigid Euler}
Let $\mc{P}$ be a (possibly empty) set of primes. Then the map 
\[\chi^\rig:K_0^\triangle(\SH^\rig(S)[\mc{P}^{-1}])\to\End_{\SH(S)}(\1_S)[\mc{P}^{-1}]\]
defined by 
\[\chi^\rig([X])=[\1_S\to X\wedge X^\vee\to X^\vee\wedge X\to\1_S]\]
is a ring homomorphism.
\end{lem}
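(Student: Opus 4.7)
The plan is to verify the defining relations of the Grothendieck ring and the multiplicative structure, then check that both descend to the localization. Invariance under isomorphism is immediate from the definition, since the trace is built entirely from the duality datum, which is natural in isomorphisms of strongly dualizable objects. Multiplicativity, $\chi^\cat(X \wedge Y) = \chi^\cat(X) \cdot \chi^\cat(Y)$, follows from the canonical identification $(X \wedge Y)^\vee \simeq Y^\vee \wedge X^\vee$ and the standard fact that the trace of the identity on a tensor product of dualizable objects factors as the product of traces in any symmetric monoidal category; the unit $\chi^\cat(\1_S) = \id_{\1_S}$ is immediate.

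The key step is additivity on distinguished triangles: for any $X \to Y \to Z \to X[1]$ in $\SH^\rig(S)$, one must show $\chi^\cat(Y) = \chi^\cat(X) + \chi^\cat(Z)$ in $\End_{\SH(S)}(\1_S)$. This is May's additivity of traces theorem \cite{May01}, applied to a compatible tensor-triangulated category. The compatibility hypotheses required by May hold for $\SH(S)$ because it is the homotopy category of a stable symmetric monoidal $\infty$-category, where cofiber sequences of pairs are canonically well-behaved with respect to the monoidal structure. Combined with the preceding paragraph, this shows $\chi^\rig$ descends to a ring homomorphism $K_0^\triangle(\SH^\rig(S)) \to \End_{\SH(S)}(\1_S)$.

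To handle the localization, I would use that the localization functor $L_\mc{P} : \SH(S) \to \SH(S)[\mc{P}^{-1}]$ is symmetric monoidal: it sends strongly dualizable objects to strongly dualizable objects and commutes with the formation of duals and traces, so the trace of $L_\mc{P}(\id_X)$ is the image of $\chi^\cat(X)$ under the canonical map $\End(\1_S) \to \End(\1_S)[\mc{P}^{-1}]$. Hence the ring homomorphism factors through the localized Grothendieck ring, landing in the localized endomorphism ring as required. The main obstacle is the additivity step: while May's theorem is well-documented, verifying its hypotheses concretely for $\SH(S)$ is subtle, and the cleanest route is to invoke the $\infty$-categorical enhancement rather than attempt to verify the tensor-triangulated axioms directly.
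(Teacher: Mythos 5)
Your proposal is correct and follows essentially the same route as the paper: well-definedness via naturality of the duality/trace data under isomorphism, the unit computation, multiplicativity from the symmetric monoidal structure, and additivity on distinguished triangles via May's theorem \cite{May01}. The only cosmetic difference is that you define the homomorphism before localizing and then descend, whereas the paper works directly in $\SH^\rig(S)[\mc{P}^{-1}]$ and identifies $\End_{\SH(S)[\mc{P}^{-1}]}(\1_S)\cong\End_{\SH(S)}(\1_S)[\mc{P}^{-1}]$; since the localization only adds morphisms, the two are equivalent.
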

\begin{proof}
Note that $\End_{\SH(S)[\mc{P}^{-1}]}(\1_S)\cong\End_{\SH(S)}(\1_S)[\mc{P}^{-1}]$, as the only additional endomorphisms in the localized category are precisely the maps $p:\1_S\to\1_S$. To begin, we show that $\chi^\rig$ is well-defined. If $\phi:X\to Y$ is an isomorphism in $\SH^\rig(S)[\mc{P}^{-1}]$ (or even in $\SH(S)[\mc{P}^{-1}]$), we get an induced isomorphism
\[\phi^*:X^\vee:=\Hom_S(X,\1_S)\xrightarrow{-\circ\phi^{-1}}\Hom_S(Y,\1_S)=:Y^\vee.\]
It follows that the diagram
\[\begin{tikzcd}[row sep=0.75em]
        & X\wedge X^\vee\arrow[r,"\tau"]\arrow[dd,"\phi\wedge\phi^*"] & X^\vee\wedge X\arrow[dd,"\phi^*\wedge\phi"']\arrow[dr,"\ev_X"] & \\
   \1_S\arrow[ur,"\mr{coev}_X"]\arrow[dr,"\mr{coev}_Y"'] & & & \1_S \\
        & Y\wedge Y^\vee\arrow[r,"\tau"] & Y^\vee\wedge Y\arrow[ur,"\ev_Y"'] &
\end{tikzcd}\]
commutes, where $\tau$ is the swap map (i.e.~tausch map) and $\mr{coev}$ denotes coevaluation. In particular, $\chi^\rig([X])$ is independent of our choice of representative of $[X]$, so the map $\chi^\rig$ is well-defined.

It remains to show that $\chi^\rig$ is a ring homomorphism. Since $\1_S^\vee\cong\1_S$, we find that $\chi^\rig([\1_S])=\id_{\1_S}$, which is the identity element of the ring $\End_{\SH(S)}(\1_S)[\mc{P}^{-1}]$. The additivity of $\chi^\rig$ over (co)fiber sequences follows from \cite{May01}. Note that $\chi^\rig([X]\cdot[Y])=\chi^\rig([X\wedge Y])$ by definition, so multiplicativity of $\chi^\rig$ follows from the multiplicativity of the Euler characteristic in any symmetric monoidal category.
\end{proof}

Our next goal is to extend this \textit{rigid Euler characteristic} $\chi^\rig$ to a ring homomorphism $K_0^\triangle(\SH^\omega(S)[\mc{P}^{-1}])\to\End(\1_S)[\mc{P}^{-1}]$ when $S=\Spec{k}$ and $\mc{P}=\{\exp(k)\}$.

\begin{lem}\label{lem:compact Euler}
Let $k$ be a field of exponential characteristic $e$. Then there is a unique ring homomorphism
\[\chi^\omega:K_0^\triangle(\SH^\omega(k)[e^{-1}])\to\GW(k)[e^{-1}]\]
such that $\chi^\omega([X])=\chi^\rig([X])$ for all $X\in\SH^\rig(S)$.
\end{lem}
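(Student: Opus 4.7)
The plan is to reduce the statement to Lemma~\ref{lem:rigid Euler} by showing that, after inverting the exponential characteristic $e$, the inclusion $\SH^\rig(k)\subseteq\SH^\omega(k)$ of \eqref{eq:Riou} becomes an equivalence, so that the Grothendieck rings in question coincide.

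First, I would invoke the theorem of Riou \cite{Riou:SpanierWhitehead}, together with its extension to non-perfect fields via Bachmann--Hoyois \cite{BachmannHoyois} and Elmanto--Khan \cite{ElmantoKhan} (precisely the reason those references are cited in the introduction and in the paragraph preceding \eqref{eq:Riou}): for any field $k$ of exponential characteristic $e$, the inclusion
\[
\SH^\rig(k)[e^{-1}]\hookrightarrow \SH^\omega(k)[e^{-1}]
\]
is an equivalence of symmetric monoidal triangulated categories. This in turn induces an isomorphism of Grothendieck rings
\[
K_0^\triangle(\SH^\rig(k)[e^{-1}])\xrightarrow{\ \cong\ }K_0^\triangle(\SH^\omega(k)[e^{-1}]).
\]

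Next, I would apply Lemma~\ref{lem:rigid Euler} with $S=\Spec k$ and $\mc{P}=\{e\}$ to obtain a ring homomorphism
\[
\chi^\rig:K_0^\triangle(\SH^\rig(k)[e^{-1}])\to \End_{\SH(k)}(\1_k)[e^{-1}].
\]
Morel's $\mb{A}^1$-Brouwer degree theorem identifies $\End_{\SH(k)}(\1_k)\cong \GW(k)$, hence the target is $\GW(k)[e^{-1}]$. Composing $\chi^\rig$ with the inverse of the isomorphism above yields a ring homomorphism
\[
\chi^\omega:K_0^\triangle(\SH^\omega(k)[e^{-1}])\to \GW(k)[e^{-1}]
\]
whose restriction to classes of strongly dualizable spectra agrees with $\chi^\rig$ by construction.

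For uniqueness, observe that the equivalence $\SH^\rig(k)[e^{-1}]\simeq \SH^\omega(k)[e^{-1}]$ implies that every class in $K_0^\triangle(\SH^\omega(k)[e^{-1}])$ is already represented by a strongly dualizable spectrum, so any ring map extending $\chi^\rig$ is forced to coincide with the $\chi^\omega$ just constructed. The main (and really the only) obstacle here is justifying the equality $\SH^\rig(k)[e^{-1}]=\SH^\omega(k)[e^{-1}]$ for arbitrary (possibly imperfect, positive characteristic) $k$; everything else is formal manipulation of Grothendieck rings and of the rigid Euler characteristic already established in Lemma~\ref{lem:rigid Euler}. This obstacle is precisely what the authors address by citing \cite{Riou:SpanierWhitehead,BachmannHoyois,ElmantoKhan}, so in the actual write-up one would simply quote those results.
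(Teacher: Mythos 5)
Your proposal is correct and matches the paper's proof: both reduce to Lemma~\ref{lem:rigid Euler} via the identification $\SH^\rig(k)[e^{-1}]=\SH^\omega(k)[e^{-1}]$ (Riou for perfect fields, Elmanto--Khan in general) together with $\End_{\SH(k)}(\1_k)\cong\GW(k)$. One small attribution point: for a possibly imperfect field the identification $\End_{\SH(k)}(\1_k)\cong\GW(k)$ is due to Bachmann--Hoyois rather than Morel, which you effectively acknowledge by citing them, so this is only a matter of phrasing.
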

\begin{proof}
By \cite[Theorem 10.12]{BachmannHoyois}, $\End_{\SH(k)}(\1_k)\cong\GW(k)$ for any field $k$ (generalizing Morel's result for perfect fields \cite[Corollary 1.24]{Morel:A1-AlgebraicTopology}).  The lemma now follows from the fact that $\SH^\rig(k)[e^{-1}]=\SH^\omega(k)[e^{-1}]$ when $k$ is a perfect field~\cite{Riou:SpanierWhitehead} or in fact any field \cite[Theorem 3.2.1]{ElmantoKhan}.
\end{proof}

\begin{rem}
Riou further proved that $\SH^\omega(k)=\SH^\rig(k)$ for any field $k$ admitting a weak form of resolution of singularities \cite{Riou:SpanierWhitehead}. For a more detailed account of this proof, see \cite[Theorem 52]{RondigsOstvaer:modules}. It follows that if resolution of singularities holds over $k$, then Lemma~\ref{lem:compact Euler} remains true even without inverting the exponential characteristic.
\end{rem}

We can now define a $\GW(k)[e^{-1}]$-valued Euler characteristic that is a motivic measure over $k$.

\begin{defn}\label{def:chi^c}
Let $k$ be a field of exponential characteristic $e$. Define the \textit{compactly supported Euler characteristic} over $k$ as the composite 
\[\chi^c:=\chi^\omega\circ r_\mot:K_0(\Var_k)\to\GW(k)[e^{-1}].\]
\end{defn}

\begin{rem}\label{rem:chi^c valued in gw}
We will later see that $\rank\chi^c$ is always an integer (Proposition~\ref{prop:rank = etale}). If $\Char{k}\neq 2$, it will follow that the image of $\chi^c$ is always contained in $\GW(k)\subseteq\GW(k)[e^{-1}]$ (Corollary~\ref{cor:gw inside gw[1/p]}). In light of Corollary~\ref{cor:gw inside gw[1/p]}, we may conflate notation and write 
\[\chi^c:K_0(\Var_k)\to\GW(k)\]
over any field of odd exponential characteristic. For now, we will continue to specify that $\chi^c$ lands inside $\GW(k)[e^{-1}]$. However, starting in Section~\ref{sec:power structure gw}, it will become necessary for us to restrict our study to fields of odd exponential characteristic and think of $\chi^c$ as taking values in $\GW(k)$.
\end{rem}

Since we know the value of $r_\mot$ and $\chi^\omega$ on quasi-projective varieties, we get a nice formula for $\chi^c$ on quasi-projective varieties.

\begin{prop}\label{prop:chi on quasi proj}
The compactly supported Euler characteristic satisfies
\[\chi^c([X])=[\1_k\to q_!\1_X\wedge(q_!\1_X)^\vee\to(q_!\1_X)^\vee\wedge q_!\1_X\to\1_k]\]
for all quasi-projective $q:X\to\Spec{k}$.
\end{prop}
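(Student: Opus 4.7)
The plan is essentially to unwind the definitions, with the one substantive step being the identification of the compact and rigid pictures after inverting the exponential characteristic. By Definition~\ref{def:chi^c}, we have $\chi^c = \chi^\omega \circ r_\mot$. First I would apply Lemma~\ref{lem:motivic realization} to rewrite
\[\chi^c([X]) = \chi^\omega(r_\mot([X])) = \chi^\omega([q_!\1_X])\]
for any quasi-projective $q : X \to \Spec k$, where the class $[q_!\1_X]$ is taken in $K_0^\triangle(\SH^\omega(k)[e^{-1}])$.

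Next I would invoke the identification $\SH^\omega(k)[e^{-1}] = \SH^\rig(k)[e^{-1}]$ from \cite{Riou:SpanierWhitehead, ElmantoKhan} (cited in the proof of Lemma~\ref{lem:compact Euler}) to conclude that $q_!\1_X$, a compact object of $\SH(k)$, becomes strongly dualizable once we pass to the localization. This is the one step where genuine content is used rather than pure formalism, since at the level of $\SH(k)$ itself the object $q_!\1_X$ need not be rigid without resolution of singularities; however, after inverting $e$ the distinction disappears, which is exactly what makes $\chi^\omega$ well-defined in the first place.

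Finally, because $\chi^\omega$ was defined in Lemma~\ref{lem:compact Euler} to restrict to $\chi^\rig$ on strongly dualizable objects, the formula from Lemma~\ref{lem:rigid Euler} applies verbatim:
\[\chi^\omega([q_!\1_X]) = \chi^\rig([q_!\1_X]) = [\1_k \to q_!\1_X \wedge (q_!\1_X)^\vee \to (q_!\1_X)^\vee \wedge q_!\1_X \to \1_k],\]
which is the claimed formula. The only mild subtlety, which I would flag but not belabor, is that the duality $(q_!\1_X)^\vee$ and the evaluation and coevaluation maps are formed inside the localized category $\SH^\rig(k)[e^{-1}]$; this is harmless because the endomorphism ring of $\1_k$ in the localized category is $\GW(k)[e^{-1}]$ by \cite[Theorem 10.12]{BachmannHoyois}, which is precisely the target of $\chi^c$.
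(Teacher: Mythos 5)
Your proposal is correct and follows the same route as the paper's proof: unwind $\chi^c=\chi^\omega\circ r_\mot$, use Lemma~\ref{lem:motivic realization} to identify $r_\mot([X])=[q_!\1_X]$, pass through the equality $\SH^\omega(k)[e^{-1}]=\SH^\rig(k)[e^{-1}]$, and conclude with Lemmas~\ref{lem:rigid Euler} and~\ref{lem:compact Euler}. The extra remark about the duality data living in the localized category is a reasonable clarification but not a departure from the paper's argument.
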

\begin{proof}
If $q:X\to\Spec{k}$ is quasi-projective, then $r_\mot([X])=[q_!\1_X]$ by Lemma~\ref{lem:motivic realization}. After localizing at $\exp(k)$, we have $q_!\1_X\in\SH^\omega(k)[e^{-1}]=\SH^\rig(k)[e^{-1}]$, so Lemmas~\ref{lem:rigid Euler} and~\ref{lem:compact Euler} give the desired result.
\end{proof}

\begin{rem}
By construction, $\chi^c$ is uniquely determined by its image on quasi-projective varieties, which implies that our construction coincides with that of \cite[\S 2]{Azouri}.
\end{rem}

When $q:X\to\Spec{k}$ is proper, we have a natural isomorphism $q_!\cong q_*$. It follows that $\chi^c=\chi^\cat[e^{-1}]$ on smooth proper schemes and hence $\chi^c=\chi^\dR[e^{-1}]$ on smooth proper varieties over fields.

\begin{prop}\label{prop:compact=cat}
If $q:X\to\Spec{k}$ is smooth and proper, then $\chi^c([X])=\chi^\cat(X)[e^{-1}]$.
\end{prop}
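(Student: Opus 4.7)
The plan is short and formal once the right diagram of six-functor identities is in place. I would unwind the definitions of the two Euler characteristics and identify them via motivic Atiyah duality for smooth proper schemes, coupled with the fact that the trace of the identity is invariant under passage to the dual.

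On the compactly supported side, Proposition~\ref{prop:chi on quasi proj} gives $\chi^c([X]) = \chi^\omega([q_!\1_X])$. Properness of $q$ yields the natural isomorphism $q_!\1_X \cong q_*\1_X$, and by Lemma~\ref{lem:compact Euler} together with the identification $\SH^\omega(k)[e^{-1}] = \SH^\rig(k)[e^{-1}]$ of Riou and Elmanto--Khan, this object is strongly dualizable, so $\chi^\omega([q_*\1_X])$ coincides with $\chi^\rig([q_*\1_X])$, the trace of the identity on $q_*\1_X$.

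To match this with $\chi^\cat(X) = \chi^\rig([\Sigma^\infty_{\mb{P}^1}X_+])$, I would invoke motivic Atiyah duality in the following concrete form. For smooth $q$ of relative dimension $d$, purity identifies $q^!\1_k$ with $\Sigma^{T_q}\1_X$, where $T_q$ denotes the relative tangent bundle. The internal-Hom version of the projection formula $\Hom_k(q_!A, B) \cong q_*\Hom_X(A, q^!B)$ applied to $A = \1_X$, $B = \1_k$ then gives $(q_!\1_X)^\vee \cong q_*\Sigma^{T_q}\1_X$. A direct comparison of the adjunctions $(q_\#, q^*)$ and $(q_!, q^!)$ using purity shows $q_\# \simeq q_! \circ \Sigma^{T_q}$, so for proper $q$ we obtain $\Sigma^\infty_{\mb{P}^1}X_+ \cong q_\#\1_X \cong q_*\Sigma^{T_q}\1_X$. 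Combining the two computations yields the Atiyah duality isomorphism
\[(q_*\1_X)^\vee \cong \Sigma^\infty_{\mb{P}^1}X_+.\]

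The proof then closes using the symmetric monoidal identity $\chi^\rig([Y]) = \chi^\rig([Y^\vee])$ for any strongly dualizable $Y$, a consequence of the symmetry of the trace pairing and the self-duality of $\1_k$. Assembling:
\[\chi^c([X]) = \chi^\rig([q_*\1_X]) = \chi^\rig([(q_*\1_X)^\vee]) = \chi^\rig([\Sigma^\infty_{\mb{P}^1}X_+]) = \chi^\cat(X)\]
in $\GW(k)[e^{-1}]$. The only delicate step is Atiyah duality; once an explicit reference (e.g., Hu's base change paper or Hoyois's treatment of the six operations) is secured in sufficient generality after localizing at $e$ and allowing imperfect base fields, the rest of the argument is purely formal.
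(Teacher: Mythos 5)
Your proposal is correct and follows essentially the same route as the paper: both identify $\chi^c([X])$ with the trace of the identity on $q_!\1_X$, observe that for smooth proper $q$ this object is dual to $\Sigma^\infty_{\mb{P}^1}X_+$ (the paper cites Hoyois's \S 3 for this Atiyah duality, where you spell out the purity and projection-formula steps), and conclude via the invariance of the categorical Euler characteristic under passage to the dual. The extra six-functor details you supply are accurate but not a different argument.
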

\begin{proof}
If $q:X\to\Spec{k}$ is smooth and proper, then $q_!\1_X:= q_!q^*\1_k$ is strongly dualizable with dual $q_*q^!\1_k\simeq\Sigma^\infty_{\mb{P}^1}X_+\in\SH(k)$ (see e.g.~\cite[\S 3]{Hoyois-quadratic}). Thus $\chi^c([X])=\chi^\cat(X^\vee)[e^{-1}]$ on (infinite suspensions of) smooth proper $k$-schemes, and we conclude by noting that $\chi^\cat(X^\vee)=\chi^\cat(X)$.
\end{proof}

This gives us the desired connection to the motivic Euler characteristic over fields of characteristic 0.

\begin{cor}\label{cor:compact=motivic}
Let $k$ be a field of characteristic 0. Then $\chi^c=\chi^\mot$.
\end{cor}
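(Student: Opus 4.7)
The plan is to reduce this to the uniqueness statement in the theorem of Arcila-Maya--Bethea--Opie--Wickelgren--Zakharevich and Pajwani--P\'al, which characterizes $\chi^\mot$ as the unique ring homomorphism $K_0(\Var_k)\to\GW(k)$ that agrees with $\chi^\cat$ on smooth projective irreducible varieties.

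First I would observe that since $\Char{k}=0$, the exponential characteristic is $e=1$, and hence $\GW(k)[e^{-1}]=\GW(k)$. So $\chi^c$ already lands in $\GW(k)$ with no localization needed, which means it is a candidate for the uniqueness statement characterizing $\chi^\mot$.

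Next I would verify that $\chi^c$ is indeed a ring homomorphism $K_0(\Var_k)\to\GW(k)$: by Definition~\ref{def:chi^c}, $\chi^c=\chi^\omega\circ r_\mot$, and both $r_\mot$ (Lemma~\ref{lem:motivic realization}) and $\chi^\omega$ (Lemma~\ref{lem:compact Euler}) were constructed as ring homomorphisms. So all that remains is to check that $\chi^c([X])=\chi^\cat(X)$ whenever $X$ is a smooth projective irreducible $k$-variety. This is precisely the content of Proposition~\ref{prop:compact=cat} (after using $e=1$), since smooth projective implies smooth and proper.

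Having verified that $\chi^c$ satisfies the characterizing property of $\chi^\mot$, I would invoke the uniqueness statement of the AMBOWZ--Pajwani--P\'al theorem to conclude $\chi^c=\chi^\mot$. There is no real obstacle in this argument: the entire content of the corollary is that the two constructions, one based on Bittner's presentation in characteristic $0$ and the other based on the categorical definition via $\SH^\omega(k)$, agree because they agree on the generators (smooth projective varieties) and both are ring homomorphisms out of $K_0(\Var_k)$.
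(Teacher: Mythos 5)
Your proposal is correct and matches the paper's own proof: both verify that $\chi^c$ is a ring homomorphism landing in $\GW(k)$ (using $e=1$), check agreement with $\chi^\cat$ on smooth projective irreducible varieties via Proposition~\ref{prop:compact=cat}, and conclude by the uniqueness statement of \cite[Theorem 2.10]{PajwaniPal:YauZaslow}. The only cosmetic difference is that the paper routes the comparison through $\chi^\dR$ via Theorem~\ref{thm:LevineRaksit}, whereas you use the $\chi^\cat$ form of the characterization directly; these are stated as equivalent.
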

\begin{proof}
By~\cite[Theorem 2.10]{PajwaniPal:YauZaslow},
\[\chi^\mot:K_0(\Var_k)\to\GW(k)\] 
is the unique ring homomorphism such that $\chi^\mot([X])=\chi^\dR(X)$ for all smooth projective irreducible varieties $X$. But any such variety is smooth and proper, so $\chi^c([X])=\chi^\cat(X)=\chi^\dR(X)$ by Proposition~\ref{prop:compact=cat} and Theorem~\ref{thm:LevineRaksit}. Thus
\[\chi^c:K_0(\Var_k)\to\End(\1_k)\cong\GW(k)\]
is a ring homomorphism such that $\chi^c([X])=\chi^\dR(X)$ for all smooth projective irreducible varieties $X$, so $\chi^c=\chi^\mot$.
\end{proof}

\begin{rem}
Recall that the motivic Euler characteristic $\chi^\mot$ was called the compactly supported Euler characteristic in~\cite{AMBOWZ:CompactEuler}. Corollary~\ref{cor:compact=motivic} justifies calling $\chi^c$ the compactly supported Euler characteristic. In Section~\ref{sec:etale with compact support}, we will see further justification for calling $\chi^c$ the compactly supported Euler characteristic.

Alternatively, one could call $\chi^c$ the \textit{motivic Euler characteristic}, now defined over a more general base than just fields of characteristic 0. The name \textit{motivic} Euler characteristic is justified both by $\chi^c$ being a motivic measure (i.e.~a homomorphism out of $K_0(\Var_k)$) and by its construction via motivic homotopy theory.

In favor of brevity, we will generally use the term \textit{motivic} rather than \textit{compactly supported}.
\end{rem}

\section{Properties and computations of the motivic Euler characteristic}\label{sec:properties and computations}
We will now prove various properties of and formulas for $\chi^c$, some of which were shown in characteristic 0 in \cite{PajwaniPal:YauZaslow}. Most of the formulas in Sections~\ref{sec:basic properties}, \ref{sec:computations}, \ref{sec:bundles}, and~\ref{sec:etale with compact support} are either inspired by computations of $\chi^\cat$ by Hoyois~\cite{Hoyois-quadratic} and Levine~\cite{Levine-quadratic} or are direct consequences of $\chi^c$ being a motivic measure. In many cases (e.g.~whenever all schemes under consideration are smooth and proper), these formulas follow directly from Hoyois and Levine. Regardless, we include the proofs in each case for the reader's convenience.

\subsection{Basic properties}\label{sec:basic properties}
We begin by recording a few instances where $\chi^c$ is additive or multiplicative. Because $\chi^c:K_0(\Var_k)\to\GW(k)[e^{-1}]$ is a ring homomorphism, the motivic Euler characteristic inherits nice properties with respect to the additive and multiplicative structure in $K_0(\Var_k)$.

\begin{prop}\label{prop:chi additive}
Let $X$ be a $k$-scheme, with $U\subset X$ an open subscheme. Then $\chi^c(X)=\chi^c(U)+\chi^c(X-U)$.
\end{prop}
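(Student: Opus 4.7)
The plan is to observe that this statement is essentially a formal consequence of the way $\chi^c$ was constructed in Definition~\ref{def:chi^c}: it is the composition of two ring homomorphisms, hence in particular additive.

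First I would note that the hypothesis implicitly requires $X$ and $U$ to be of finite type over $k$ so that their classes live in $K_0(\Var_k)$. Given a closed immersion $Z \hookrightarrow X$ with open complement $U = X \setminus Z$, the scissors relation built into the definition of $K_0(\Var_k)$ yields the identity
\[
[X] = [U] + [X \setminus U]
\]
in $K_0(\Var_k)$. This is the only input from the geometry of varieties.

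Next I would invoke that $\chi^c = \chi^\omega \circ r_\mot$, where $r_\mot : K_0(\Var_k) \to K_0^\triangle(\SH^\omega(k)[e^{-1}])$ is a ring homomorphism by Lemma~\ref{lem:motivic realization} (and the subsequent corollary after localization at $e$), and $\chi^\omega : K_0^\triangle(\SH^\omega(k)[e^{-1}]) \to \GW(k)[e^{-1}]$ is a ring homomorphism by Lemma~\ref{lem:compact Euler}. In particular, $\chi^c$ is additive, so applying it to the displayed identity gives $\chi^c(X) = \chi^c(U) + \chi^c(X\setminus U)$.

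There is no substantive obstacle here; the content of the statement is entirely absorbed into the assertion that $\chi^c$ factors through $K_0(\Var_k)$, which was the whole point of constructing $r_\mot$ and extending $\chi^\rig$ to a homomorphism $\chi^\omega$ defined on all of $K_0^\triangle(\SH^\omega(k)[e^{-1}])$. If one wanted to be more explicit, one could unwind the proof of Lemma~\ref{lem:motivic realization} via \cite[Chapter 2, Lemma 3.7.3]{CLNS:MotivicIntegration}: for quasi-projective $j:U\hookrightarrow X$ and closed complement $i:Z\hookrightarrow X$ (both quasi-projective whenever $X$ is), the localization (co)fiber sequence $j_!\1_U \to q_!\1_X \to (q\circ i)_!\1_Z$ in $\SH^\omega(k)$ is exactly what makes $r_\mot([X]) = r_\mot([U]) + r_\mot([Z])$, and then additivity of $\chi^\omega$ over distinguished triangles closes the argument.
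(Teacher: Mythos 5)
Your proof is correct and matches the paper's argument exactly: both rest on the scissors relation $[X]=[U]+[X\setminus U]$ in $K_0(\Var_k)$ together with the fact that $\chi^c$ is a ring homomorphism by construction. The extra unwinding via the localization (co)fiber sequence is accurate but not needed once additivity of $\chi^c$ is granted.
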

\begin{proof}
We have the identity $[X]=[U]+[X-U]$ in $K_0(\Var_k)$, and $\chi^c$ respects the additive structure of $K_0(\Var_k)$.
\end{proof}

\begin{prop}\label{prop:chi mult}
Let $X$ and $Y$ be $k$-schemes. Then $\chi^c(X\times_k Y)=\chi^c(X)\cdot\chi^c(Y)$.
\end{prop}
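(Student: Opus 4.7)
The plan is to observe that this is a direct consequence of the ring structure built into the construction of $\chi^c$, together with the definition of multiplication in $K_0(\Var_k)$. Recall from Definition~\ref{def:chi^c} that $\chi^c = \chi^\omega \circ r_\mot$ is a composition of two ring homomorphisms: Lemma~\ref{lem:motivic realization} established that $r_\mot : K_0(\Var_k) \to K_0^\triangle(\SH^\omega(k))$ is a ring homomorphism, and Lemma~\ref{lem:compact Euler} established the same for $\chi^\omega$. Thus $\chi^c$ preserves products of elements in $K_0(\Var_k)$.

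The first step is therefore to recall that the ring structure on $K_0(\Var_k)$ is defined so that $[X] \cdot [Y] = [X \times_k Y]$ for $k$-schemes $X, Y$ whose classes lie in $K_0(\Var_k)$. Then multiplicativity is immediate:
\[
\chi^c(X \times_k Y) = \chi^c([X] \cdot [Y]) = \chi^c([X]) \cdot \chi^c([Y]) = \chi^c(X) \cdot \chi^c(Y).
\]

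There is essentially no obstacle here, since all the hard work has already been absorbed into verifying that $\chi^c$ is a ring homomorphism. One might briefly remark (in parallel with the proof of Proposition~\ref{prop:chi additive}) that it is precisely for this formal reason that we packaged the construction through $r_\mot$ and $\chi^\omega$: the compatibility of $\chi^\omega$ with smash products in $\SH^\omega(k)[e^{-1}]$ and the compatibility of $r_\mot$ with fiber products over $\Spec k$ together encode multiplicativity of $\chi^c$ at the level of varieties, so no separate geometric input is needed.
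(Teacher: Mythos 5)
Your proof is correct and matches the paper's argument exactly: the identity $[X\times_k Y]=[X]\cdot[Y]$ in $K_0(\Var_k)$ combined with the fact that $\chi^c$ is a ring homomorphism. The additional remarks tracing multiplicativity back to $r_\mot$ and $\chi^\omega$ are accurate but not needed beyond what the paper already established.
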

\begin{proof}
We have the identity $[X\times_k Y]=[X]\cdot[Y]$ in $K_0(\Var_k)$, and $\chi^c$ respects the mutliplicative structure of $K_0(\Var_k)$.
\end{proof}

As a consequence, we find that $\chi^c$ is multiplicative over any Zariski-locally trivial fiber bundle.

\begin{cor}\label{cor:chi mult fiber bundles}
Let $X$ be a connected $k$-scheme. Let $p:Y\to X$ be a Zariski-locally trivial fiber bundle with fiber $F$. Then $\chi^c(Y)=\chi^c(X)\cdot\chi^c(F)$.
\end{cor}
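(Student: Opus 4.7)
The plan is to reduce the bundle statement to a finite stratification of the base into locally closed pieces over which the bundle becomes trivial, and then apply additivity (Proposition~\ref{prop:chi additive}) and multiplicativity (Proposition~\ref{prop:chi mult}) of $\chi^c$. In fact the cleanest approach is to establish the identity $[Y] = [X]\cdot[F]$ already in $K_0(\Var_k)$ and then push it through $\chi^c$.

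First I would choose a finite open cover $U_1,\dots,U_r$ of $X$ over which $p$ trivializes, i.e.\ there are isomorphisms $p^{-1}(U_i) \cong U_i \times_k F_i$ compatible with projection to $U_i$. Using that $X$ is connected, the fibers $F_i$ are all isomorphic to a common $F$ (on any nonempty overlap $U_i \cap U_j$ the two trivializations pick out the same geometric fiber, so $F_i \cong F_j$; connectedness propagates this across the cover). Then I would stratify $X$ by setting $Z_1 := U_1$ and $Z_i := U_i \setminus (U_1 \cup \cdots \cup U_{i-1})$ for $i \geq 2$. Each $Z_i$ is locally closed in $X$, the $Z_i$ form a disjoint decomposition of $X$, and because a trivial bundle restricts to a trivial bundle on any subscheme, $p^{-1}(Z_i) \cong Z_i \times_k F$.

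Next I would apply the scissor relations in $K_0(\Var_k)$. Iterating Proposition~\ref{prop:chi additive} (which in fact already holds at the level of $K_0(\Var_k)$ via $[X] = [U] + [X\setminus U]$) one gets $[X] = \sum_{i=1}^{r} [Z_i]$ and $[Y] = \sum_{i=1}^{r}[p^{-1}(Z_i)] = \sum_{i=1}^r [Z_i \times_k F]$. Combining with the product relation $[Z_i\times_k F] = [Z_i]\cdot [F]$ (this is exactly the identity in $K_0(\Var_k)$ underlying Proposition~\ref{prop:chi mult}), I get
\[
[Y] = \Bigl(\sum_{i=1}^{r}[Z_i]\Bigr)\cdot [F] = [X]\cdot [F]
\]
in $K_0(\Var_k)$. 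Applying the ring homomorphism $\chi^c$ gives the stated formula.

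The only real subtlety is ensuring that a common fiber $F$ exists, and this is exactly where connectedness of $X$ is used; the remainder is a direct application of the fact that $\chi^c$ is a ring homomorphism from $K_0(\Var_k)$, already packaged in Propositions~\ref{prop:chi additive} and~\ref{prop:chi mult}. I expect no further obstacle beyond carefully writing the stratification.
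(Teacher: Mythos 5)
Your proof is correct and follows essentially the same route as the paper: stratify the base into locally closed pieces over which the bundle trivializes, establish $[Y]=[X]\cdot[F]$ in $K_0(\Var_k)$ via the scissor and product relations, and apply the ring homomorphism $\chi^c$. The only difference is that you spell out the construction of the stratification from an open cover and worry about the fiber being well-defined (which is actually part of the hypothesis, since $F$ is given in the definition of a locally trivial fiber bundle), whereas the paper simply takes the stratification as given.
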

\begin{proof}
This follows from the well-known identity $[Y]=[X]\cdot[F]$ in $K_0(\Var_k)$. To verify this identity, we take a locally closed stratification $X=\coprod_i U_i$ such that each $p^{-1}(U_i)\to U_i$ is a trivial fiber bundle. By Proposition~\ref{prop:chi mult}, we have $[p^{-1}(U_i)]=[U_i]\cdot[F]$. Now
\begin{align*}
    [Y]&=\sum_i[p^{-1}(U_i)]\\
    &=[F]\sum_i[U_i]\\
    &=[F]\cdot[X].\qedhere
\end{align*}
\end{proof}

Finally, we will prove a few propositions showing that $\chi^c$ is compatible with field extensions. If $L/k$ is a field extension, let $\chi^c_k$ and $\chi^c_L$ denote the motivic Euler characteristic over $k$ and $L$, respectively. 

\begin{prop}\label{prop:field_ext}
Let $i:k\to L$ be a field extension, and let $i_*:\GW(k)[e^{-1}]\to\GW(L)[e^{-1}]$ be the induced homomorphism on Grothendieck--Witt rings. Given $X\in\Var_k$, denote the base change $X_L:=X\times_{\Spec{k}}\Spec{L}\in\Var_L$. Then $i_*\chi^c(X)=\chi^c(X_L)$.
\end{prop}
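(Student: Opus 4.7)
The plan is to exploit the factorization $\chi^c = \chi^\omega \circ r_\mot$ and to verify compatibility with base change at each stage of this composite. Note first that since $L/k$ is a field extension, $k$ and $L$ share the same exponential characteristic $e$, so $i_*$ is indeed a ring homomorphism $\GW(k)[e^{-1}] \to \GW(L)[e^{-1}]$. Let $i: \Spec L \to \Spec k$ denote the structure map, let $i^*: \SH(k) \to \SH(L)$ be the associated pullback functor, and for each $k$-variety $q: X \to \Spec k$ let $i_X: X_L \to X$ and $q_L: X_L \to \Spec L$ denote the base change morphisms.

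\textbf{Step 1 (compatibility with the motivic realization).} The first task is to show $i^* r_\mot([X]) = r_\mot([X_L])$ in $K_0^\triangle(\SH^\omega(L)[e^{-1}])$ for every quasi-projective $X \in \Var_k$. By Lemma~\ref{lem:motivic realization}, this reduces to producing an isomorphism $i^* q_! \1_X \simeq (q_L)_! \1_{X_L}$. This is the standard base change formula for the exceptional pushforward: for the Cartesian square with vertical maps $q, q_L$ and horizontal maps $i, i_X$, one has $i^* q_! \simeq (q_L)_! i_X^*$ in the six-functor formalism on $\SH$ (see, e.g., \cite{CisinskiDeglise}). Since $i_X^* \1_X \simeq \1_{X_L}$ because pullback of the unit is the unit, the desired identification follows.

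\textbf{Step 2 (compatibility with the Euler characteristic).} The functor $i^*: \SH(k) \to \SH(L)$ is symmetric monoidal and preserves strongly dualizable objects; in particular, the maps $\mr{coev}$, $\mr{ev}$, and the tausch map $\tau$ used to define $\chi^\rig$ are preserved by $i^*$. Applying $i^*$ to the defining composite of $\chi^\rig([q_!\1_X])$ therefore produces the defining composite of $\chi^\rig([(q_L)_!\1_{X_L}])$ inside $\End_{\SH(L)}(\1_L)$. Under the Bachmann--Hoyois identifications $\End_{\SH(k)}(\1_k) \cong \GW(k)$ and $\End_{\SH(L)}(\1_L) \cong \GW(L)$, the map induced by $i^*$ on endomorphism rings of the unit is exactly the base change homomorphism $i_*: \GW(k) \to \GW(L)$ (this is functoriality of Morel's theorem with respect to field extensions, which may be read off from the construction of the isomorphism). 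Passing through the localization at $e$, we conclude $\chi^\omega_L(i^* E) = i_* \chi^\omega_k(E)$ for every $E \in \SH^\omega(k)[e^{-1}]$.

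\textbf{Step 3 (assembly).} Combining Steps 1 and 2 with Proposition~\ref{prop:chi on quasi proj}, we obtain
\[
\chi^c_L(X_L) = \chi^\omega_L(r_\mot([X_L])) = \chi^\omega_L(i^* r_\mot([X])) = i_* \chi^\omega_k(r_\mot([X])) = i_* \chi^c_k(X)
\]
for all quasi-projective $X \in \Var_k$. Since such classes generate $K_0(\Var_k)$ and both $\chi^c_L \circ (-)_L$ and $i_* \circ \chi^c_k$ are ring homomorphisms, the equality extends to all of $K_0(\Var_k)$.

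\textbf{Anticipated obstacle.} The conceptual content is essentially formal once one has (a) proper/exceptional base change in $\SH$ and (b) functoriality of Morel's identification $\End(\1) \cong \GW$ under pullback. The main subtlety to confirm carefully is (b), namely that the induced map on $\End(\1)$ really is $i_*$ rather than some other homomorphism; for this one should trace through how quadratic forms are realized as endomorphisms of the motivic sphere (e.g.\ via symbols $\langle a \rangle$ for $a \in k^\times$) and note that $i^*$ sends the symbol $\langle a \rangle$ to the symbol $\langle i(a) \rangle$, which is exactly the definition of $i_*$ on $\GW(k)$.
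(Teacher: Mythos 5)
Your proposal is correct and follows essentially the same route as the paper's proof: identify $i^*q_!\1_X\simeq (q_L)_!\1_{X_L}$ via the exchange/proper base change isomorphism in $\SH$, observe that the symmetric monoidal pullback induces exactly $i_*$ on $\End(\1)\cong\GW$, and then extend from quasi-projective generators to all of $K_0(\Var_k)$ using that both sides are ring homomorphisms. Your extra care in Step 2 about why the induced map on endomorphisms of the unit is $i_*$ (tracing symbols $\langle a\rangle\mapsto\langle i(a)\rangle$) is a point the paper treats more tersely, but the arguments are the same.
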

\begin{proof}
Let $\pi:\Spec{L}\to\Spec{k}$ be the map of schemes induced by $i$. Then we have the pullback functor $\pi^*:\SH(k)[e^{-1}]\to\SH(L)[e^{-1}]$, which is exact and symmetric monoidal. Note that $\End_{\SH(L)[e^{-1}]}(\1_L)\cong\End_{\SH(L)}(\1_L)[e^{-1}]$, as $e:\1_L\to\1_L$ is the only additional endomorphism introduced by localization. We thus have $\pi^*:\End_{\SH(k)[e^{-1}]}(\1_k)\to\End_{\SH(L)[e^{-1}]}(\1_L)$, which is equivalent to $i_*:\GW(k)[e^{-1}]\to\GW(L)[e^{-1}]$ by Morel's isomorphism $\End_{\SH(F)}(\1_F)\cong\GW(F)$.

Let $p:X\to\Spec{k}$ and $q:X_L\to\Spec{L}$ be the structure maps, fitting inside the cartesian square
\[\begin{tikzcd}
X_L\arrow[r,"f"]\arrow[d,"q"'] & X\arrow[d,"p"]\\
\Spec{L}\arrow[r,"\pi"] & \Spec{k}.
\end{tikzcd}\]
The exchange morphism $\pi^*p_!\to q_!f^*$ is an isomorphism by \cite[Proposition~2.2.14]{CisinskiDeglise}, since proper base change holds in the stable motivic homotopy category and $\pi$ is separated (being a morphism between affine schemes). Thus the monoidality of $f^*$ implies that $\pi^*p_!\1_X\cong q_!\1_{X_L}$. In particular, $i_*\chi^c_k(X)=\chi^c_L(X_L)$ whenever $p:X\to\Spec{k}$ is quasi-projective. We thus have two ring homomorphisms
\[\chi^c_L(-_L),i_*\chi^c_k:K_0(\Var_k)\to\GW(L)[e^{-1}]\]
that coincide on quasi-projective varieties. It follows from \cite[Chapter 2, Corollary 2.6.6]{CLNS:MotivicIntegration} that $\chi^c_L(-_L)=i_*\chi^c_k$.
\end{proof}

Given a field extension $L/k$, we get an induced structure map $p:\Spec{L}\to\Spec{k}$. Given an $L$-variety $q:X\to\Spec{L}$, we can thus view $X$ as a $k$-variety $p\circ q:X\to\Spec{k}$. This induces a group (but not ring) homomorphism $\pi_{L/k}:K_0(\Var_L)\to K_0(\Var_k)$.

\begin{prop}\label{prop:trace}
Let $L/k$ be a finite separable field extension. Then the following diagram of groups commutes:
\[\begin{tikzcd}
K_0(\Var_L)\arrow[r,"\pi_{L/k}"]\arrow[d,"\chi^c_L"'] & K_0(\Var_k)\arrow[d,"\chi^c_k"]\\
\GW(L)[e^{-1}]\arrow[r,"\Tr_{L/k}"] & \GW(k)[e^{-1}].
\end{tikzcd}\]
\end{prop}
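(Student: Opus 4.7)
The approach mirrors the proof of Proposition~\ref{prop:field_ext}. Both $\chi^c_k \circ \pi_{L/k}$ and $\Tr_{L/k} \circ \chi^c_L$ are additive, so by \cite[Chapter 2, Corollary 2.6.6]{CLNS:MotivicIntegration} it suffices to verify the identity on classes $[X]$ for quasi-projective structure maps $q \colon X \to \Spec L$. Write $p \colon \Spec L \to \Spec k$ for the map induced by $L/k$; since $p$ is finite (hence projective), $p \circ q \colon X \to \Spec k$ is again quasi-projective, and Lemma~\ref{lem:motivic realization} gives
\[
r_\mot(\pi_{L/k}[X]) = [(p \circ q)_! \1_X] = [p_! q_! \1_X] \in K_0^\triangle(\SH^\omega(k)[e^{-1}]).
\]

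The key input is a categorical trace identity: for finite étale $p \colon \Spec L \to \Spec k$ and any strongly dualizable $E \in \SH^\rig(L)[e^{-1}]$, the pushforward $p_! E = p_* E$ is strongly dualizable in $\SH^\rig(k)[e^{-1}]$, and
\[
\chi^\cat_k(p_! E) \;=\; \Tr_{L/k}\bigl(\chi^\cat_L(E)\bigr).
\]
Here the right-hand side uses Morel's identification $\GW(F) \cong \End_{\SH(F)}(\1_F)$ (valid in the non-perfect case by \cite[Theorem 10.12]{BachmannHoyois}) together with the fact that, under this identification, the scheme-theoretic trace $\Tr_{L/k}$ agrees with the pushforward on endomorphisms of the unit induced by ambidexterity for $p$. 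Granting this identity and applying it to $E = q_! \1_X$ (which lies in $\SH^\rig(L)[e^{-1}] = \SH^\omega(L)[e^{-1}]$ by \cite[Theorem 3.2.1]{ElmantoKhan}), Proposition~\ref{prop:chi on quasi proj} yields
\[
\chi^c_k(\pi_{L/k}[X]) = \chi^\omega([p_! q_! \1_X]) = \Tr_{L/k}\bigl(\chi^\omega([q_! \1_X])\bigr) = \Tr_{L/k}\bigl(\chi^c_L([X])\bigr),
\]
which is precisely the asserted commutativity.

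The main obstacle is the categorical trace identity together with the identification of $p_*$ on $\End(\1)$ with the scheme-theoretic trace $\Tr_{L/k}$. The formal piece — that for any strongly dualizable $E$ in a symmetric monoidal category with a "pushforward" admitting the required adjoints, the Euler characteristic of the pushforward is the pushforward of the Euler characteristic — is a general categorical fact (a consequence of the projection formula and Kelly's trace formalism). The honest content is checking that the endomorphism pushforward $p_*$ recovers the classical transfer on Grothendieck–Witt rings; this is the motivic transfer compatibility established in the Hoyois–Bachmann–Morel framework for finite étale maps, and one must verify the localizations at $e$ do not interfere. Once this is in hand, the diagram chase above closes the argument.
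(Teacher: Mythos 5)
Your proposal is correct and follows essentially the same route as the paper: reduce to quasi-projective classes via \cite[Chapter 2, Corollary 2.6.6]{CLNS:MotivicIntegration}, identify $r_\mot(\pi_{L/k}[X])$ with $[(p\circ q)_!\1_X]$, and invoke the categorical trace identity for the finite \'etale map $p:\Spec L\to\Spec k$ (the paper cites \cite[Proposition 5.2]{Hoyois-quadratic} for $\tr(p_\#\omega)=\Tr_{L/k}\omega$ together with $p_\#\simeq p_!$, which is exactly the "honest content" you isolate). The only cosmetic difference is that you phrase the key input via $p_*$ and ambidexterity rather than via $p_\#$, which amounts to the same thing for finite \'etale morphisms.
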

\begin{proof}
Let $p:\Spec{L}\to\Spec{k}$ be the structure map, which is \'etale by our separability assumption. We have a left adjoint $p_\#:\SH(L)\to\SH(k)$ to $p^*$. Given an endomorphism $\omega\in\End(\1_L)$, \cite[Proposition 5.2]{Hoyois-quadratic} states that $\tr(p_\#\omega)=\Tr_{L/k}\omega$, where $\tr$ denotes the categorical trace. Setting $\omega=\chi^c_L$, we have
\begin{align}\label{eq:trace down}
    \Tr_{L/k}\chi^c_L(X)&=[\1_k\to p_\#q_!\1_X\wedge(p_\#q_!\1_X)^\vee\to(p_\#q_!\1_X)^\vee\wedge p_\#q_!\1_X\to\1_k]
\end{align}
for every quasi-projective $q:X\to\Spec{L}$. Since $p$ is \'etale, we have a natural isomorphism $p_\#\simeq p_!$, so $p_\#q_!\1_X\cong(p\circ q)_!\1_X$. Equation~\ref{eq:trace down} thus simplifies to
\begin{align*}
\Tr_{L/k}\chi^c_L(X)&=[\1_k\to(p\circ q)_!\1_X\wedge((p\circ q)_!\1_X)^\vee\to((p\circ q)_!\1_X)^\vee\wedge(p\circ q)_!\1_X\to\1_k],
\end{align*}
which is equal to $\chi^c_k\pi_{L/k}(X)$. We thus have two group homomorphisms
\[\Tr_{L/k}\chi^c_L,\chi^c_k\pi_{L/k}:K_0(\Var_L)\to\GW(k)[e^{-1}]\] that coincide on quasi-projective varieties. It follows from \cite[Chapter 2, Corollary 2.6.6.]{CLNS:MotivicIntegration} that $\Tr_{L/k}\chi^c_L=\chi^c_k\pi_{L/k}$ as group homomorphisms.
\end{proof}

\subsection{Computations for various schemes}\label{sec:computations}
We now compute the motivic Euler characteristic of points, projective spaces, affine spaces, and a few other interesting schemes.

\begin{notn}
Given a positive integer $n$, let
\[n_\eps:=\underbrace{\langle 1\rangle+\langle -1\rangle+\langle 1\rangle+\cdots+\langle -1\rangle^{n-1}}_{n\text{ times}},\]
which can be considered as an element of $\GW(k)$ or $\GW(k)[e^{-1}]$. We also define $0_\eps:=0$.
\end{notn}

\begin{prop}\label{prop:chi of P^n}
We have $\chi^c(\Spec{k})=\langle 1\rangle$ and $\chi^c(\mb{P}^n_k)=(n+1)_\eps$.
\end{prop}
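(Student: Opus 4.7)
The identity $\chi^c(\Spec{k}) = \langle 1 \rangle$ is essentially immediate: since $\chi^c \colon K_0(\Var_k) \to \GW(k)[e^{-1}]$ is a ring homomorphism, it must send the multiplicative identity $[\Spec{k}]$ of $K_0(\Var_k)$ to the multiplicative identity $\langle 1 \rangle$ of $\GW(k)[e^{-1}]$. (One could alternatively verify this directly from Proposition~\ref{prop:chi on quasi proj}, since for $q = \id_{\Spec k}$ one has $q_! \1_k = \1_k$, and the composite defining $\chi^c$ reduces to $\id_{\1_k}$, which corresponds to $\langle 1 \rangle$ under the Morel--Bachmann--Hoyois isomorphism.)

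For $\chi^c(\mb{P}^n_k) = (n+1)_\eps$, the plan is to reduce to the base case $n = 1$ via the standard cellular stratification $\mb{P}^n_k = \mb{A}^n_k \sqcup \mb{P}^{n-1}_k$. By additivity (Proposition~\ref{prop:chi additive}), this gives $\chi^c(\mb{P}^n_k) = \chi^c(\mb{A}^n_k) + \chi^c(\mb{P}^{n-1}_k)$. Further, multiplicativity (Proposition~\ref{prop:chi mult}) applied to $\mb{A}^n_k = (\mb{A}^1_k)^n$ yields $\chi^c(\mb{A}^n_k) = \chi^c(\mb{A}^1_k)^n$. So it suffices to compute $\chi^c(\mb{A}^1_k)$ and to check that $\chi^c(\mb{A}^1_k) = \langle -1 \rangle$, which would give $\chi^c(\mb{A}^n_k) = \langle (-1)^n \rangle$ and then $\chi^c(\mb{P}^n_k) = \langle (-1)^n \rangle + n_\eps = (n+1)_\eps$ by induction.

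To compute $\chi^c(\mb{A}^1_k)$, I would use additivity again on $\mb{P}^1_k = \mb{A}^1_k \sqcup \{\infty\}$ to reduce to computing $\chi^c(\mb{P}^1_k)$. Since $\mb{P}^1_k$ is smooth and proper, Proposition~\ref{prop:compact=cat} and Theorem~\ref{thm:LevineRaksit} identify $\chi^c(\mb{P}^1_k)$ with $\chi^\dR(\mb{P}^1_k)$. The only nonzero Hodge groups are $\H^0(\mb{P}^1, \mc{O}) \cong k$ and $\H^1(\mb{P}^1, \Omega^1) \cong k$, both placed in degree $0$ inside $\Hdg(\mb{P}^1)$. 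The pairings $\beta_{0,0}$ and $\beta_{1,1}$ together produce, in the basis $(1, \omega)$, a matrix of the form $\bigl(\begin{smallmatrix} 0 & 1 \\ 1 & 0 \end{smallmatrix}\bigr)$, which represents the hyperbolic form $\langle 1 \rangle + \langle -1 \rangle$. Thus $\chi^c(\mb{P}^1_k) = 2_\eps$, and subtracting $\chi^c(\Spec{k}) = \langle 1 \rangle$ yields $\chi^c(\mb{A}^1_k) = \langle -1 \rangle$, completing the chain above.

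The only step with any real content is the Hodge pairing computation on $\mb{P}^1$; the rest is formal manipulation using the ring structure of $\chi^c$ and additivity. An alternative to doing the $\mb{P}^1$ computation by hand would be to cite Hoyois's or Levine's prior categorical computation of $\chi^\cat(\mb{P}^n) = (n+1)_\eps$ in \cite{Hoyois-quadratic, Levine-quadratic} and invoke Proposition~\ref{prop:compact=cat} directly, bypassing the inductive argument; I would probably include the inductive proof anyway since it showcases the additivity/multiplicativity properties just established.
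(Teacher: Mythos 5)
Your proof is correct, but it takes a different route from the paper's. The paper's entire proof is two sentences: since $\Spec{k}$ and $\mb{P}^n_k$ are smooth and proper, $\chi^c$ agrees with $\chi^\cat$ on them (Proposition~\ref{prop:compact=cat}), and then it cites Hoyois's computation $\chi^\cat(\mb{P}^n_k)=(n+1)_\eps$ \cite[Example 1.7]{Hoyois-quadratic} --- i.e.\ exactly the ``alternative'' you mention in your last paragraph and set aside. Your primary argument instead inducts on the cell decomposition $\mb{P}^n_k=\mb{A}^n_k\sqcup\mb{P}^{n-1}_k$, which requires you to compute $\chi^c(\mb{A}^1_k)=\langle-1\rangle$ first; you do this correctly and non-circularly by evaluating $\chi^\dR(\mb{P}^1_k)$ directly (the hyperbolic pairing between $\H^0(\mc{O})$ and $\H^1(\Omega^1)$, both sitting in degree $0$ of $\Hdg(\mb{P}^1)$, with signs $(-1)^{i+j}=+1$) and subtracting $\chi^c(\Spec k)=\langle 1\rangle$. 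Note that in the paper the logical order is reversed: Corollary~\ref{cor:chi A^n} is \emph{deduced from} this proposition, so you could not have cited it, and you correctly avoid doing so. What your route buys is self-containedness --- only the easy $n=1$ case of Theorem~\ref{thm:LevineRaksit} is needed, and the argument showcases additivity and multiplicativity of $\chi^c$; what the paper's route buys is brevity and no dependence on the explicit Hodge pairing. Your treatment of $\chi^c(\Spec k)$ via unitality of the ring homomorphism is also cleaner than strictly necessary and is fine.
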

\begin{proof}
Since $\Spec{k}$ and $\mb{P}^n_k$ are smooth and proper over $k$, it follows that $\chi^c(X)=\chi^\cat(X)[e^{-1}]$ when $X$ is one of these schemes. The result now follows from \cite[Example 1.7]{Hoyois-quadratic}.
\end{proof}

As a corollary, we can compute the motivic Euler characteristic of affine space.

\begin{cor}\label{cor:chi A^n}
We have $\chi^c(\mb{A}^n_k)=\langle -1\rangle^n$.
\end{cor}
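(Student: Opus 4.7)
The plan is to use the standard affine-space stratification of projective space together with the additive and multiplicative properties of $\chi^c$ already established. There are essentially two equivalent routes, and I would write out the simpler of them.

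First, I would observe that $\mb{P}^n_k$ has a closed embedding of $\mb{P}^{n-1}_k$ as a hyperplane with open complement $\mb{A}^n_k$. By Proposition~\ref{prop:chi additive}, we have
\[\chi^c(\mb{A}^n_k) = \chi^c(\mb{P}^n_k) - \chi^c(\mb{P}^{n-1}_k).\]
Applying Proposition~\ref{prop:chi of P^n}, this becomes $(n+1)_\eps - n_\eps$, which by the definition of $n_\eps$ collapses to the $(n+1)$-th summand $\langle -1 \rangle^n$.

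Alternatively (and this is how I would actually present it, since it reinforces the multiplicative structure one needs later), one can first handle $n=1$ by the same hyperplane trick to get $\chi^c(\mb{A}^1_k) = 2_\eps - \langle 1 \rangle = \langle -1 \rangle$, and then use Proposition~\ref{prop:chi mult} together with the identification $\mb{A}^n_k \cong (\mb{A}^1_k)^n$ to conclude $\chi^c(\mb{A}^n_k) = \langle -1 \rangle^n$.

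There is no real obstacle here: everything reduces to basic bookkeeping in $\GW(k)[e^{-1}]$ once additivity (for the open/closed decomposition $\mb{P}^n = \mb{A}^n \sqcup \mb{P}^{n-1}$) and multiplicativity of $\chi^c$ are in place. The only thing worth double-checking is the telescoping identity $(n+1)_\eps - n_\eps = \langle -1 \rangle^n$, which follows immediately from the definition of $n_\eps$ as the partial sum $\langle 1 \rangle + \langle -1 \rangle + \cdots + \langle -1 \rangle^{n-1}$.
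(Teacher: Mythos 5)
Your proof is correct and follows essentially the same route as the paper: the decomposition $[\mb{P}^n_k]=[\mb{A}^n_k]+[\mb{P}^{n-1}_k]$, additivity of $\chi^c$, and the telescoping identity $(n+1)_\eps-n_\eps=\langle-1\rangle^n$, with the paper also noting your alternative via multiplicativity on $\mb{A}^n_k=\prod_{i=1}^n\mb{A}^1_k$. No issues.
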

\begin{proof}
We use the decomposition $[\mb{P}^n_k]=[\mb{A}^n_k]+[\mb{P}^{n-1}_k]$ in $K_0(\Var_k)$. By Propositions~\ref{prop:chi additive} and~\ref{prop:chi of P^n}, we thus have
\begin{align*}
    \chi^c(\mb{A}^n_k)&=\chi^c(\mb{P}^n_k)-\chi^c(\mb{P}^{n-1}_k)\\
    &=(n+1)_\eps-n_\eps\\
    &=\langle-1\rangle^n.
\end{align*}
Alternatively, one can compute $\chi^c(\mb{A}^1_k)=\langle-1\rangle$ in this manner and then apply Proposition~\ref{prop:chi mult} to $\mb{A}^n_k=\prod_{i=1}^n\mb{A}^1_k$.
\end{proof}

We can now obtain various important classes in $\GW(k)[e^{-1}]$ as the motivic Euler characteristic of relatively simple schemes.

\begin{cor}\label{cor:punctured affine}
Let $p_1,\ldots,p_r\in\mb{A}^n_k(k)$ be distinct $k$-rational points. Then $\chi^c(\mb{A}^n_k-\{p_1,\ldots,p_r\})=\langle -1\rangle^n-r\langle 1\rangle$. In particular,
\begin{enumerate}[(i)]
\item $\chi^c(\mb{G}_m)=\langle-1\rangle-\langle 1\rangle$,
\item $\chi^c(\mb{P}^1_k-\{0,1,\infty\})=\langle-1\rangle-2\langle 1\rangle$,
\item $\chi^c(\mb{A}^{2m}_k-\{0\})=0$, and
\item $\chi^c(\mb{A}^{2m}_k-\{0,1\})=-\langle 1\rangle$.
\end{enumerate}
\end{cor}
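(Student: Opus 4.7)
The plan is to derive the general statement as a direct application of additivity (Proposition~\ref{prop:chi additive}) and then read off the four special cases by specializing $n$ and $r$. Since $\{p_1,\ldots,p_r\}$ is a closed subscheme of $\mb{A}^n_k$ with open complement $U:=\mb{A}^n_k\setminus\{p_1,\ldots,p_r\}$, additivity gives
\[\chi^c(\mb{A}^n_k)=\chi^c(U)+\chi^c(\{p_1,\ldots,p_r\}).\]
First I would substitute $\chi^c(\mb{A}^n_k)=\langle -1\rangle^n$ using Corollary~\ref{cor:chi A^n}. Next, since each $p_i$ is a $k$-rational point, its residue field is $k$, so $\{p_i\}\cong\Spec k$; by additivity over the disjoint union of these closed points together with $\chi^c(\Spec k)=\langle 1\rangle$ from Proposition~\ref{prop:chi of P^n}, we obtain $\chi^c(\{p_1,\ldots,p_r\})=r\langle 1\rangle$. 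Solving for $\chi^c(U)$ gives the stated formula $\langle -1\rangle^n-r\langle 1\rangle$.

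For the corollaries, the only step that requires a moment's thought is (ii): observe that $\mb{P}^1_k\setminus\{0,1,\infty\}=\mb{A}^1_k\setminus\{0,1\}$ (taking $\infty$ as the standard point at infinity in $\mb{P}^1_k$), so this is the $n=1$, $r=2$ case of the main formula. The remaining cases follow immediately by plugging in: $(n,r)=(1,1)$ for (i), $(n,r)=(2m,1)$ for (iii), and $(n,r)=(2m,2)$ for (iv), using $\langle -1\rangle^{2m}=\langle 1\rangle$ to simplify the two even-dimensional computations.

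There is no real obstacle here — the entire statement is a two-line consequence of the motivic measure property combined with the already-computed values $\chi^c(\mb{A}^n_k)$ and $\chi^c(\Spec k)$. The only thing to double-check is the rationality hypothesis on the $p_i$: if the points were not $k$-rational, then $\chi^c(\{p_i\})=\Tr_{k(p_i)/k}\langle 1\rangle$ via Proposition~\ref{prop:trace}, which would give a different trace form rather than $\langle 1\rangle$. Since the $p_i$ are assumed $k$-rational, this subtlety does not intervene.
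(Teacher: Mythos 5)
Your proposal is correct and is essentially identical to the paper's proof, which likewise deduces the formula immediately from additivity (Proposition~\ref{prop:chi additive}) together with the computations $\chi^c(\mb{A}^n_k)=\langle-1\rangle^n$ and $\chi^c(\Spec k)=\langle 1\rangle$. Your specializations for the four listed cases, including the identification $\mb{P}^1_k\setminus\{0,1,\infty\}=\mb{A}^1_k\setminus\{0,1\}$ and the simplification $\langle-1\rangle^{2m}=\langle 1\rangle$, are all accurate.
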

\begin{proof}
Since $p_1,\ldots,p_r$ are distinct, this follows immediately from Proposition~\ref{prop:chi additive} after computing $\chi^c(\mb{A}^n_k)$ (Corollary~\ref{cor:chi A^n}) and $\chi^c(\Spec{k})$ (Proposition~\ref{prop:chi of P^n}).
\end{proof}

\begin{ex}
We will later prove that the signature of $\chi^c(X)$ corresponds to the compactly supported Euler characteristic of the real locus $X(\mb{R})$ (see Proposition~\ref{prop:sign = etale of real locus}). The real locus of $\mb{G}_m$ is the affine hyperbola illustrated in Figure~\ref{fig:hyperbola}, which has Euler characteristic $-2=\sign(\langle-1\rangle-\langle 1\rangle)$.
\end{ex}

\begin{figure}
\begin{tikzpicture}[scale=0.75]
  \draw[->] (-3, 0) -- (3,0);
  \draw[->] (0, -3) -- (0, 3);
  \draw[ultra thick,domain=1/3:3,smooth,variable=\x,red] plot ({\x}, {1/\x});
  \draw[ultra thick,domain=-3:-1/3,smooth,variable=\x,red] plot ({\x}, {1/\x});
\end{tikzpicture}
\caption{$\mb{G}_m(\mb{R})$ with compactly supported Euler characteristic $-2$}\label{fig:hyperbola}
\end{figure}
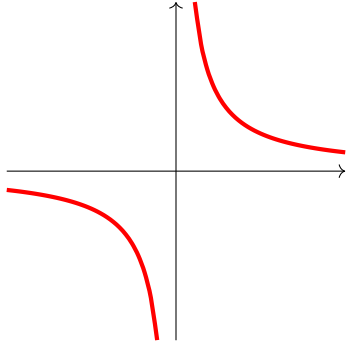

\begin{ex}\label{ex:motivic = cat}
By Proposition~\ref{prop:compact=cat}, we can compute the motivic Euler characteristic of any smooth proper scheme by computing its categorical Euler characteristic. Here, we record a few more computations from the literature.
\begin{enumerate}[(i)]
\item If $L/k$ is a finite separable field extension, then $\chi^c(\Spec{L})=\Tr_{L/k}\langle 1\rangle$, where $\Tr_{L/k}:\GW(L)[e^{-1}]\to\GW(k)[e^{-1}]$ is induced by the field trace \cite[Theorem 1.9]{Hoyois-quadratic}.
\item If $X$ is a Brauer--Severi variety of dimension $n$ over $k$, then $\chi^c(X)=(n+1)_\eps$ \cite[Examples 2.6]{Levine-quadratic}. Note that we generally do not have an equality of $[X]$ and $[\mb{P}^n]$ in $K_0(\Var_k)$ \cite[Theorem 7]{Lit15}, so the computation $\chi^c(X)=\chi^c(\mb{P}^n)$ shows that the class $[X]-[\mb{P}^n]$ lies in $\ker\chi^c$. Since $\chi^c$ is a ring homomorphism, we also find that $\chi^c(P)=1$, where $P\in K_0(\Var_k)$ is the class mentioned in \cite[Theorem 7]{Lit15}.
\item Let $\Gr_k(r,n)$ be the Grassmannian of $r$-planes in $k^n$. Set $n_\mb{C}:=\binom{n}{r}$ and $n_\mb{R}:=\binom{\floor{n/2}}{\floor{r/2}}$. Then $\chi^c(\Gr_k(r,n))=\frac{n_\mb{C}+n_\mb{R}}{2}\langle 1\rangle+\frac{n_\mb{C}-n_\mb{R}}{2}\langle-1\rangle$. Indeed, since Grassmannians are smooth and projective over $\mb{Z}$, this follows from \cite[Theorem 5.11]{BachmannWickelgren} and the computations $\rank\chi^\cat(\Gr_k(r,n))=\chi(\Gr_\mb{C}(r,n))=n_\mb{C}$ and $\sign\chi^\cat(\Gr_k(r,n))=\chi(\Gr_\mb{R}(r,n))=n_\mb{R}$. Alternatively, one can compute $\chi^\cat(\Gr_k(r,n))$ using the cellular structure of $\Gr_k(r,n)$ (see e.g.~\cite[Proposition 8.3 and Theorem 8.4]{BrazeltonMcKeanPauli}).
\end{enumerate}
\end{ex}

\begin{rem}
At this point, we have given enough examples to prove that $\chi^c$ surjects onto $\GW(k)$ when $\Char{k}\neq 2$. This fact, which we prove in Proposition~\ref{prop:chi^c surjective}, will be essential to the broader goals of the article, so we postpone its proof until the surjectivity becomes pertinent.
\end{rem}

\subsection{Computations for bundles}\label{sec:bundles}
Thanks to Corollary~\ref{cor:chi mult fiber bundles}, we can compute the motivic Euler characteristic of vector bundles, projective bundles, and blow ups.

\begin{cor}\label{cor:chi vector bundle}
Let $V\to X$ be a vector bundle of rank $r$. Then $\chi^c(V)=\langle-1\rangle^r\cdot\chi^c(X)$.
\end{cor}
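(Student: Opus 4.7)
The plan is to apply Corollary~\ref{cor:chi mult fiber bundles} together with the computation $\chi^c(\mb{A}^r_k)=\langle-1\rangle^r$ from Corollary~\ref{cor:chi A^n}. A rank $r$ vector bundle $p: V \to X$ is by definition Zariski-locally trivial with fiber $\mb{A}^r_k$, so this is essentially a direct invocation of the previous two results.

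The only mild subtlety is that Corollary~\ref{cor:chi mult fiber bundles} is stated under a connectedness hypothesis on the base, whereas we want to allow arbitrary $X$. To handle this, I would first reduce to the connected case by observing that $X$ admits a finite locally closed stratification $X=\coprod_i U_i$ on which the bundle trivializes, i.e.~$p^{-1}(U_i)\cong U_i\times_k\mb{A}^r_k$. Then by additivity (Proposition~\ref{prop:chi additive}) and multiplicativity (Proposition~\ref{prop:chi mult}), combined with $\chi^c(\mb{A}^r_k)=\langle-1\rangle^r$, I can write
\[
\chi^c(V)=\sum_i\chi^c(p^{-1}(U_i))=\sum_i\chi^c(U_i)\cdot\langle-1\rangle^r=\langle-1\rangle^r\cdot\chi^c(X),
\]
which is really just a recapitulation of the proof of Corollary~\ref{cor:chi mult fiber bundles} specialized to the affine fiber $\mb{A}^r_k$.

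There is no real obstacle here, since the content has already been developed in the preceding corollaries; the proof amounts to two citations. If one wanted an even cleaner statement, one could instead observe that the identity $[V]=[X]\cdot[\mb{A}^r_k]$ holds in $K_0(\Var_k)$ (by the stratification argument above), and then apply the ring homomorphism $\chi^c$ to both sides.
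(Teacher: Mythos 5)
Your proof is correct and matches the paper's, which likewise just combines Corollary~\ref{cor:chi mult fiber bundles} with Corollary~\ref{cor:chi A^n}. Your remark about the connectedness hypothesis is a fair observation, but the stratification argument you recapitulate is exactly the proof of Corollary~\ref{cor:chi mult fiber bundles} and never uses connectedness, so no genuinely new step is needed.
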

\begin{proof}
A vector bundle of rank $r$ is Zariski-locally trivial with fiber $\mb{A}^r_k$, so the result follows from Corollaries~\ref{cor:chi mult fiber bundles} and~\ref{cor:chi A^n}.
\end{proof}

We also get a direct computation of the motivic Euler characteristic of projective bundles.

\begin{cor}\label{cor:chi projective bundle}
Let $V\to X$ be a vector bundle of rank $r$. Then $\chi^c(\mb{P}V)=r_\eps\cdot\chi^c(X)$.
\end{cor}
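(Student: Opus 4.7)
The plan is to reduce this immediately to the tools already assembled. The projectivization $\mathbb{P}V \to X$ of a rank $r$ vector bundle is Zariski-locally trivial with fiber $\mathbb{P}^{r-1}_k$, since any Zariski-trivialization $V|_U \cong \mathbb{A}^r_U$ induces a trivialization $\mathbb{P}V|_U \cong \mathbb{P}^{r-1}_U$. So the structural input is just that $\mathbb{P}V \to X$ belongs to the class of fiber bundles handled by Corollary~\ref{cor:chi mult fiber bundles}.

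From there, I would apply Corollary~\ref{cor:chi mult fiber bundles} to get
\[
\chi^c(\mathbb{P}V) = \chi^c(X)\cdot\chi^c(\mathbb{P}^{r-1}_k),
\]
and then invoke Proposition~\ref{prop:chi of P^n}, which gives $\chi^c(\mathbb{P}^{r-1}_k) = r_\eps$. Putting these together yields $\chi^c(\mathbb{P}V) = r_\eps\cdot\chi^c(X)$.

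The only mild subtlety is that Corollary~\ref{cor:chi mult fiber bundles} is stated for a connected base, whereas $X$ here is arbitrary. This is handled by decomposing $X$ into its connected components (or finer, into a locally closed stratification by connected locally closed subschemes) and using additivity of $\chi^c$ (Proposition~\ref{prop:chi additive}) together with the fact that the fiber $\mathbb{P}^{r-1}_k$ is the same on every stratum. There is no genuine obstacle; the proof is essentially one line once the preceding corollaries are in place.
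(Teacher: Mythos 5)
Your proof is correct and matches the paper's argument exactly: the paper likewise observes that $\mb{P}V\to X$ is a Zariski-locally trivial fiber bundle with fiber $\mb{P}^{r-1}_k$ and concludes by Corollary~\ref{cor:chi mult fiber bundles} and Proposition~\ref{prop:chi of P^n}. Your extra remark about reducing to a connected base via additivity is a sensible (and harmless) addition that the paper elides.
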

\begin{proof}
The projective bundle $\mb{P}V$ is a Zariski-locally trivial fiber bundle with fibers $\mb{P}^{r-1}_k$, so the result follows from Corollary~\ref{cor:chi mult fiber bundles} and Proposition~\ref{prop:chi of P^n}.
\end{proof}

Since the exceptional divisor of a blow up along a smooth subscheme is a projective bundle, the motivic Euler characteristic satisfies a fairly simple formula in this setting:

\begin{prop}\label{prop:chi blow up}
Let $X$ be a $k$-scheme. Let $Z\subset X$ be an lci closed subscheme of codimension $d$. Let $\tilde{X}$ denote the blow up of $X$ along $Z$. Then
\[\chi^c(\tilde{X})=\chi^c(X)+\langle-1\rangle\cdot(d-1)_\eps\cdot\chi^c(Z).\]
In particular, if $Z$ is of codimension 1, then $\chi^c(\tilde{X})=\chi^c(X)$.\footnote{If $Z$ is reduced of codimension 1, then $\tilde{X}\cong X$, which directly implies $\chi^c(\tilde{X})=\chi^c(X)$.}
\end{prop}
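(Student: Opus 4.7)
The plan is to exploit the standard additive decomposition of a blow-up and the projective bundle formula already established. Let $\pi:\tilde{X}\to X$ denote the blow-up and $E\subset\tilde{X}$ the exceptional divisor. Since $Z\subset X$ is lci of codimension $d$, the normal sheaf $N_{Z/X}$ is a locally free $\mathcal{O}_Z$-module of rank $d$, and $E\cong\mathbb{P}(N_{Z/X})$ is a Zariski-locally trivial projective bundle over $Z$ with fiber $\mathbb{P}^{d-1}_k$. Moreover, $\pi$ restricts to an isomorphism $\tilde{X}\setminus E\xrightarrow{\sim}X\setminus Z$.

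First I would apply Proposition~\ref{prop:chi additive} twice, once to the open/closed decomposition $\tilde{X}=(\tilde{X}\setminus E)\sqcup E$ and once to $X=(X\setminus Z)\sqcup Z$. Using $\chi^c(\tilde{X}\setminus E)=\chi^c(X\setminus Z)$, subtracting gives
\begin{equation*}
\chi^c(\tilde{X})-\chi^c(X)=\chi^c(E)-\chi^c(Z).
\end{equation*}
Next I would apply Corollary~\ref{cor:chi projective bundle} to the projective bundle $E=\mathbb{P}(N_{Z/X})\to Z$ arising from a rank $d$ vector bundle, yielding $\chi^c(E)=d_\eps\cdot\chi^c(Z)$. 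Substituting, we obtain $\chi^c(\tilde{X})=\chi^c(X)+(d_\eps-\langle 1\rangle)\cdot\chi^c(Z)$.

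It remains to verify the identity $d_\eps-\langle 1\rangle=\langle -1\rangle\cdot(d-1)_\eps$ in $\GW(k)[e^{-1}]$. Unwinding the notation,
\begin{equation*}
d_\eps-\langle 1\rangle=\sum_{i=1}^{d-1}\langle -1\rangle^i=\langle -1\rangle\cdot\sum_{j=0}^{d-2}\langle -1\rangle^j=\langle -1\rangle\cdot(d-1)_\eps,
\end{equation*}
which is the desired formula. The final claim for $d=1$ follows since $(d-1)_\eps=0_\eps=0$ by convention, yielding $\chi^c(\tilde{X})=\chi^c(X)$.

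There is no real obstacle here; the argument is a direct application of the additivity of $\chi^c$ (Proposition~\ref{prop:chi additive}) and the projective bundle computation (Corollary~\ref{cor:chi projective bundle}), together with the standard fact that the exceptional divisor of a blow-up along an lci subscheme is the projectivization of the normal bundle. The only point that requires mild care is that Proposition~\ref{prop:chi additive} is stated for open/closed decompositions, so one should note that $E\hookrightarrow\tilde{X}$ and $Z\hookrightarrow X$ are closed immersions with the specified open complements; this is automatic.
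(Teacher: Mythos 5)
Your proof is correct and is essentially identical to the paper's: both decompose $\tilde{X}$ and $X$ along the exceptional divisor and center, apply Proposition~\ref{prop:chi additive} and Corollary~\ref{cor:chi projective bundle} to get $\chi^c(E)=d_\eps\cdot\chi^c(Z)$, and then simplify $d_\eps-\langle 1\rangle=\langle -1\rangle\cdot(d-1)_\eps$. No issues.
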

\begin{proof}
Let $N_{Z/X}\to Z$ be the normal bundle of $Z$ inside $X$. Let $E=\mb{P}N_{Z/X}\to Z$ be the exceptional divisor of $\tilde{X}$, so that $X-Z\cong\tilde{X}-E$. Since $N_{Z/X}$ has rank $d$, Proposition~\ref{prop:chi additive} and Corollary~\ref{cor:chi projective bundle} thus imply
\begin{align*}
    \chi^c(X)-\chi^c(Z)&=\chi^c(\tilde{X})-\chi^c(E)\\&=\chi^c(\tilde{X})-d_\eps\cdot\chi^c(Z).
\end{align*}
We conclude by noting that
\begin{align*}
d_\eps\cdot\chi^c(Z)-\chi^c(Z)&=(d_\eps-\langle 1\rangle)\cdot\chi^c(Z)\\
&=\langle-1\rangle\cdot(d-1)_\eps\cdot\chi^c(Z).\qedhere
\end{align*}
\end{proof}

\section{Recovering the \'etale Euler characteristic with compact support}\label{sec:etale with compact support}
An essential feature of the categorical Euler characteristic $\chi^\cat:\SH^\rig(S)\to\End(\1_S)$ is its connection to the \'etale Euler characteristic. When $S$ is a subfield of $\mb{C}$, we have $\rank\chi^\cat(\Sigma^\infty_{\mb{P}^1}X_+)=\chi(X(\mb{C})^{\mr{an}})$. If $S$ is a subfield of $\mb{R}$, we also have $\sign\chi^\cat(\Sigma^\infty_{\mb{P}^1}X_+)=\chi(X(\mb{R})^{\mr{an}})$. That is, the rank and signature of the categorical Euler characteristic of a smooth scheme are given by the singular Euler characteristic of its complex and real loci, respectively \cite[Remarks 2.3 (1)]{Levine-quadratic}. 

Over any field of characteristic not 2 the rank of $\chi^\cat$ is equal to the \'etale Euler characteristic  \cite[Section 1]{Levine-quadratic}. In this subsection, we will expand on Levine's remarks in \textit{loc.~cit.}~to show that the rank of $\chi^c$ is equal to the \'etale Euler characteristic with compact support. This will imply that the image of $\chi^c$ is contained in $\GW(k)\subseteq\GW(k)[e^{-1}]$ (see Corollary~\ref{cor:gw inside gw[1/p]}).

\begin{prop}\label{prop:rank = etale}
Let $k$ be a field of characteristic not 2. Let $\ell$ be a prime that not equal to $\Char{k}$. If $X$ is a $k$-variety, then $\rank\chi^c([X])=\chi^\et(X)$, where $\chi^\et$ is the \'etale Euler characteristic with compact support:
\begin{equation}\label{eq:alternating sum}
\chi^\et(X):=\sum_i(-1)^i\dim \H^i_c(X_{k^\mr{sep}};\mb{Q}_\ell).
\end{equation}
In particular, we obtain a ring homomorphism $\rank\chi^c:K_0(\Var_k)\to\mb{Z}$.
\end{prop}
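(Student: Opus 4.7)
My plan is to show that both $\rank\chi^c$ and $\chi^\et$ define ring homomorphisms $K_0(\Var_k)\to\mb{Z}[e^{-1}]$, and then verify their agreement on the generating set of quasi-projective varieties by passing through $\ell$-adic realization.

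First I would check both are ring homomorphisms. The map $\rank\chi^c$ is a composition of ring homomorphisms ($\chi^c$ from Definition~\ref{def:chi^c}, followed by the rank map $\GW(k)[e^{-1}]\to\mb{Z}[e^{-1}]$). For $\chi^\et$, the alternating sum in Equation~\ref{eq:alternating sum} is additive in locally closed decompositions (via the excision triangle $j_!j^*\to\id\to i_*i^*$ applied with compact supports, together with the long exact sequence on $\ell$-adic cohomology with compact support, which is well-defined for separated finite type schemes since $\ell\neq\Char{k}$) and multiplicative in products (Künneth). It therefore descends to a ring homomorphism $\chi^\et:K_0(\Var_k)\to\mb{Z}$. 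Since $K_0(\Var_k)$ is generated by classes of quasi-projective $k$-varieties \cite[Chapter 2, Corollary 2.6.6]{CLNS:MotivicIntegration}, it suffices to prove $\rank\chi^c([X])=\chi^\et(X)$ for quasi-projective $q:X\to\Spec{k}$.

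Next I would invoke the $\ell$-adic étale realization. There is a symmetric monoidal, exact functor
\[\mr{Re}_\ell:\SH(k)[e^{-1}]\to\mb{D}^b_c(\Gal(k^\mr{sep}/k),\mb{Q}_\ell)\]
compatible with Grothendieck's six-functor formalism (Ayoub, Cisinski--Déglise; Bachmann has extended the construction to general base schemes, covering the non-perfect case). It sends $\1_k$ to $\mb{Q}_\ell$ and $q_!\1_X$ to $Rq_!\mb{Q}_\ell$, whose cohomology is $\H^*_c(X_{k^\mr{sep}};\mb{Q}_\ell)$. Being symmetric monoidal and exact, it preserves strongly dualizable objects and categorical Euler characteristics, so $\mr{Re}_\ell(\chi^c([X]))=\chi^\cat(Rq_!\mb{Q}_\ell)\in\End(\mb{Q}_\ell)=\mb{Q}_\ell$. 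The induced ring homomorphism $\mr{Re}_\ell:\GW(k)[e^{-1}]\to\mb{Q}_\ell$ sends each $\langle a\rangle$ to the identity of $\mb{Q}_\ell$ (symmetric monoidality forces a rank-$1$ invertible to land in an invertible endomorphism of the unit, which is $1$), hence it coincides with the rank map. Since the categorical Euler characteristic of a perfect complex of $\mb{Q}_\ell$-vector spaces is the alternating sum of dimensions of its cohomology, we conclude $\rank\chi^c([X])=\chi^\cat(Rq_!\mb{Q}_\ell)=\chi^\et(X)$.

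The main obstacle is technical rather than structural: ensuring that the $\ell$-adic realization and its compatibility with $q_!$ are available over an arbitrary (possibly non-perfect) field $k$ of characteristic not $2$, and that the induced map on $\End(\1_k)$ really is the classical rank. If one prefers to avoid Bachmann's general construction, one can instead combine continuity of $\SH$ under the perfection $k\to k^{1/p^\infty}$ (used already in Lemma~\ref{lem:compact Euler} via \cite{ElmantoKhan}) with the corresponding continuity of $\ell$-adic étale cohomology to reduce to the perfect case, where the realization is classical. Once this is in place, everything else is formal manipulation of the six functors. The final statement that $\rank\chi^c$ lands in $\mb{Z}\subseteq\mb{Z}[e^{-1}]$ is then automatic, since $\chi^\et$ takes integer values.
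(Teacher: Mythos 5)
Your overall strategy coincides with the paper's: reduce to quasi-projective $q:X\to\Spec{k}$ via \cite[Chapter 2, Corollary 2.6.6]{CLNS:MotivicIntegration}, apply the $\ell$-adic realization to $q_!\1_X$ so that it represents compactly supported $\ell$-adic cohomology, and use compatibility of categorical traces with symmetric monoidal functors. The one step where your justification breaks down is the identification of the induced map $\GW(k)[e^{-1}]\to\mb{Q}_\ell$ with the rank homomorphism. You claim symmetric monoidality forces $\langle a\rangle$ to land in ``an invertible endomorphism of the unit, which is $1$.'' But the invertible endomorphisms of $\mb{Q}_\ell$ form $\mb{Q}_\ell^\times$, not $\{1\}$, and even the constraint $\langle a\rangle^2=\langle 1\rangle$ only pins the image down to $\pm 1$. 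The real Betti realization is equally symmetric monoidal and sends $\langle -1\rangle$ to $-1$ --- that is exactly the mechanism behind Proposition~\ref{prop:sign = etale of real locus} --- so no formal monoidality argument can distinguish rank from signature here. To close the gap over $k$ itself you would need an actual computation, e.g.\ that the automorphism $[x:y]\mapsto[x:ay]$ of $\mb{P}^1$ representing $\langle a\rangle$ acts trivially on $\H^*_\et(\mb{P}^1_{k^\mr{sep}};\mb{Q}_\ell)$ (it has degree $+1$ on $\H^2$), whence its desuspension realizes to $1\in\mb{Q}_\ell$.

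The paper sidesteps this point by first passing to the separable closure: since $k^\mr{sep}$ is quadratically closed when $\Char{k}\neq 2$, the rank is identified with the base-change map $i_*:\GW(k)\to\GW(k^\mr{sep})\cong\mb{Z}$, and Proposition~\ref{prop:field_ext} gives $\rank\chi^c(X)=\chi^c(X_{k^\mr{sep}})$. The $\ell$-adic realization is then applied over $k^\mr{sep}$, where the induced ring map $\GW(k^\mr{sep})[e^{-1}]\cong\mb{Z}[e^{-1}]\to\mb{Q}_\ell$ can only be the standard inclusion, so the delicate identification disappears. Your additional remarks --- the excision/K\"unneth argument that $\chi^\et$ is a motivic measure, and the reduction to the perfect case by continuity --- are correct; the paper simply asserts the former and does not explicitly address the latter. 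With the $\langle a\rangle\mapsto 1$ step repaired (or replaced by the base change to $k^\mr{sep}$), your argument is complete.
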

\begin{proof}
The proof is essentially identical to Levine's argument for $\rank\chi^\cat$ given in \cite[Section 1]{Levine-quadratic}. We will expand on the details for the reader's convenience. Let $q:X\to\Spec{k}$ be a quasi-projective $k$-variety. Let $\mr{D}^b_c(-,\mb{Q}_\ell)$ denote the derived category of constructible sheaves with $\mb{Q}_\ell$-coefficients. For any prime $\ell$ not equal to $e$, the $\ell$-adic realization functor $r_\ell:\SH(k)[e^{-1}]\to\mr{DM}(k)[e^{-1}]\to\mr{D}^b_c(k,\mb{Q}_\ell)$ sends the motivic spectrum $q_!\1_X\in\SH(k)[e^{-1}]$ to the complex $q_!\mb{Q}_{\ell,X}\in\mr{D}^b_c(k,\mb{Q}_\ell)$. The key point is that the complex $q_!\mb{Q}_{\ell,X}=q_!q^*\mb{Q}_{\ell}$ represents $\ell$-adic cohomology with compact supports (see e.g.~\cite[Th\'eor\`eme (5.4)]{Deligne}, or \cite[\S 1.4]{Gallauer} for a modern survey).

Now we apply the previous discussion to the base change $\bar{q} : X_{k^\mr{sep}} \to \Spec{k^{\mr{sep}}}$ to the separable closure to see that $\bar{q}_{!}\mb{Q}_{\ell, X_{k^\mr{sep}}} \in \mr{D}^b_c(k^\mr{sep},\mb{Q}_\ell)$ represents $\H^\bullet_c(X_{k^\mr{sep}};\mb{Q}_\ell)$ and so the categorical trace $\tr(\id_{\bar{q}_!\mb{Q}_{\ell,X}})$ is the alternating sum $\chi^{\et}(X)$ in Equation~\ref{eq:alternating sum}. On the other hand, we can identify the map $\rank$ with the base extension map $i_*:\GW(k) \to \GW(k^\mr{sep}) \cong \mathbb{Z}$, as $k^\mr{sep}$ is quadratically closed when $\Char{k}\neq 2$. By Proposition~\ref{prop:field_ext}, we have $\rank \chi^c(X) = \chi^c(X_{k^\mr{sep}})$ and by Proposition~\ref{prop:chi on quasi proj}, $\chi^c(X_k^\mr{sep}) = \tr(\id_{\bar{q}_!\1_{X_{k^\mr{sep}}}})$. 

Now note that the $\ell$-adic realization functor $r_{\ell}$ induces a canonical map $\GW(k^\mr{sep})[e^{-1}] \to \End_{\mr{D}^b_c(k^\mr{sep},\mb{Q}_\ell)}(\mb{Q}_\ell) = \mb{Q}_{\ell}$ which may be identified with the inclusion $\mb{Z}[e^{-1}] \to \mb{Q}_{\ell}$. Moreover, categorical traces are compatible with symmetric monoidal functors, so under this inclusion $\tr(\id_{\bar{q}_!\1_{X_{k^\mr{sep}}}})$ gets identified with $\tr(\id_{\bar{q}_!\mb{Q}_{\ell,X}})$. Thus $\rank \chi^c(X) = \chi^{\et}(X)$ as required. 

Next, we note that both $\rank\chi^c$ and $\chi^\et$ induce ring homomorphisms
\[\rank\chi^c,\chi^\et:K_0(\Var_k)\to\mb{Z}[e^{-1}].\]
We have proved that these two ring homomorphisms agree on quasi-projective varieties, so $\rank\chi^c=\chi^\et$ by \cite[Chapter 2, Corollary 2.6.6]{CLNS:MotivicIntegration}. 

The compactly supported $\ell$-adic Euler characteristic of any $k$-variety is a sum of integers, so $\rank\chi^c([X])\in\mb{Z}$ for any $k$-variety $X$. Since $K_0(\Var_k)$ is generated as a ring by classes of $k$-varieties, the image of $\rank\chi^c$ is generated by its value on classes of $k$-varieties. In particular, $\rank\chi^c(M)\in\mb{Z}$ for any $M\in K_0(\Var_k)$. 
\end{proof}

\begin{rem}\label{rem:rank}
If $k\subseteq\mb{C}$, the Betti realization functor
\begin{align*}
    \mr{re}_\mb{C}:\SH(\mb{C})&\to\SH\\
    \Sigma^\infty_{\mb{P}^1}X_+&\mapsto\Sigma^\infty X(\mb{C})_+
\end{align*}
sends the map of varieties $q:X\to\Spec{k}$ to the map $q(\mb{C}):X(\mb{C})\to\{\mr{pt}\}$ of topological spaces. Singular cohomology with compact supports of $X$ is represented by the functor $q(\mb{C})_!$, so we find that $\mr{re}_\mb{C}$ sends $\chi^c(X_{\mb{C}})$ to the compactly supported Euler characteristic of the topological space $X(\mb{C})$. Together with the observation that
\[\mr{re}_\mb{C}:\GW(\mb{C})\cong\End_{\SH(\mb{C})}(\1_\mb{C})\to\End_\SH(\mb{S})\cong\mb{Z}\]
is the rank homomorphism (cf. \cite[Remarks 2.3 (1)]{Levine-quadratic}), we get a slightly different version of Proposition~\ref{prop:rank = etale} in this context.
\end{rem}

Because $\rank\chi^c$ is $\mb{Z}$-valued, we can deduce that $\chi^c$ is $\GW(k)$-valued when $\Char{k}\neq 2$.

\begin{cor}\label{cor:gw inside gw[1/p]}
If $k$ is a field with odd exponential characteristic $e$, then the image of $\chi^c:K_0(\Var_k)\to\GW(k)[e^{-1}]$ is contained in $\GW(k)\subseteq\GW(k)[e^{-1}]$.
\end{cor}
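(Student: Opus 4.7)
The plan is to combine Proposition~\ref{prop:rank = etale} with two inputs: injectivity of $\GW(k)\hookrightarrow\GW(k)[e^{-1}]$ and the availability of $\GW(k)$-valued Euler characteristics on smooth proper varieties.

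First, I would verify that under the hypothesis $\Char k\neq 2$ the map $\GW(k)\to\GW(k)[e^{-1}]$ is injective, so that the inclusion in the statement is meaningful. Its kernel is the $e$-primary torsion of $\GW(k)$, which for odd $e$ is the odd torsion. By Pfister's theorem the Witt ring $W(k)$ has only $2$-primary torsion when $\Char k\neq 2$, and the kernel of the surjection $\GW(k)\twoheadrightarrow W(k)$ is the torsion-free subgroup $\mb{Z}\cdot H$ generated by the hyperbolic form $H=\langle 1\rangle+\langle-1\rangle$. Hence $\GW(k)$ has no odd torsion and $\GW(k)\hookrightarrow\GW(k)[e^{-1}]$ is injective.

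Second, since $\chi^c$ is a ring homomorphism out of $K_0(\Var_k)$ and the latter is generated as an abelian group by classes of quasi-projective varieties, it suffices to show $\chi^c([X])\in\GW(k)$ for each such $X$. For smooth proper $X$, Proposition~\ref{prop:compact=cat} and Theorem~\ref{thm:LevineRaksit} give $\chi^c([X])=\chi^\dR(X)\in\GW(k)$ outright, via the non-degenerate symmetric bilinear form on $\Hdg(X)$ coming from Serre duality. For general quasi-projective $X$, the plan is to reduce to this case by Nagata compactification together with Proposition~\ref{prop:chi additive} applied to a suitable stratification, using resolution of singularities in characteristic zero or de Jong alterations of prime-to-$e$ degree in positive characteristic, with induction on dimension handling the strata.

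The main obstacle is the positive-characteristic reduction: alterations produce relations in $K_0(\Var_k)[1/e]$ rather than in $K_0(\Var_k)$ itself, and a priori this only gives $\chi^c([X])\in\GW(k)[e^{-1}]$. Proposition~\ref{prop:rank = etale} is the crucial extra constraint, identifying $\rank\chi^c([X])$ with the $\ell$-adic compactly supported Euler characteristic, which is an integer. Combined with the injectivity from the first step, this integrality is what rules out genuine $e$-denominators in the final lift and shows that $\chi^c([X])$ lies in $\GW(k)$ rather than just $\GW(k)[e^{-1}]$.
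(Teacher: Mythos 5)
There is a real gap at the final step. You correctly reduce to showing $\chi^c([X])\in\GW(k)$ for quasi-projective $X$, and you correctly identify $\rank\chi^c([X])\in\mb{Z}$ (Proposition~\ref{prop:rank = etale}) as the decisive input. But your concluding sentence — that integrality of the rank ``combined with the injectivity from the first step'' forces $\chi^c([X])\in\GW(k)$ — is an assertion, not a deduction. Injectivity of $\GW(k)\hookrightarrow\GW(k)[e^{-1}]$ says nothing about which elements of the localization come from $\GW(k)$. What you actually need is that the square with horizontal maps $\GW(k)\to\GW(k)[e^{-1}]$ and $\mb{Z}\to\mb{Z}[e^{-1}]$ and vertical maps $\rank$ is \emph{cartesian}, i.e.\ that an element of $\GW(k)[e^{-1}]$ with integral rank already lies in $\GW(k)$. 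This is exactly the content of the paper's proof (the bi-cartesian diagram from Levine's remarks), and it requires an argument: the kernel of $\rank$ on $\GW(k)$ is the fundamental ideal $I(k)$, which injects into $W(k)$; in the only nontrivial case $\Char k=p=e>2$ the field is not formally real, so $W(k)$ is $2$-primary torsion by Pfister, hence $I(k)$ is uniquely $e$-divisible and $I(k)\to I(k)[e^{-1}]$ is an isomorphism, which gives cartesianness. You have all the raw materials for this (you invoke Pfister and the splitting $\ker(\GW(k)\to W(k))=\mb{Z}\cdot H$), but you deploy them only to prove injectivity and then skip the step that does the work.

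Separately, the middle of your proposal — Nagata compactification, stratification, resolutions, and prime-to-$e$ alterations — is a detour the paper does not take and that you yourself concede does not close in positive characteristic (alterations only yield relations after inverting $e$, which returns you to where you started). Once the cartesian square is in hand, none of that is needed: the corollary follows for \emph{all} of $K_0(\Var_k)$ directly from $\rank\chi^c(M)\in\mb{Z}$, with no devissage to smooth proper varieties at all.
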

\begin{proof}
This follows from the discussion in \cite[Remarks 2.1 (2)]{Levine-quadratic}, which we briefly explain here. When $\Char{k}\neq 2$, the map $\GW(k)\to\GW(k)[e^{-1}]$ is injective, fitting into the bi-cartesian commutative diagram
\[\begin{tikzcd}
    \GW(k)\arrow[r,hook]\arrow[d,"\rank"'] & \GW(k)[e^{-1}]\arrow[d,"\rank"]\\
    \mb{Z}\arrow[r,hook] & \mb{Z}[e^{-1}].
\end{tikzcd}\]
Since the image of $\rank\chi^c$ is contained in $\mb{Z}\subseteq\mb{Z}[e^{-1}]$, this implies that the image of $\chi^c$ is contained in $\GW(k)\subseteq\GW(k)[e^{-1}]$.
\end{proof}

As one might hope, the signature of $\chi^c$ recovers the compactly supported Euler characteristic of the real locus (or more generally, the real closed locus).

\begin{prop}\label{prop:sign = etale of real locus}
Let $k$ be a field admitting an embedding $\sigma:k\to R$, where $R$ is a real closed field. If $X$ is a $k$-variety, then $\sign_\sigma\chi^c([X])=\chi^t(X_\sigma(R))$, where $\chi^t$ is the Euler characteristic with compact support for topological spaces, $X_\sigma(R)$ is the real closed locus of $X$ under the embedding $\sigma$, and $\sign_\sigma$ is the signature with respect to the embedding $\sigma$.
\end{prop}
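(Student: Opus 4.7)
The plan is to mirror the argument of Proposition~\ref{prop:rank = etale}, replacing the $\ell$-adic realization with a real (semi-algebraic) Betti realization. First, using Proposition~\ref{prop:field_ext} applied to the embedding $\sigma:k\hookrightarrow R$, we have $\sign_\sigma\chi^c_k([X])=\sign\chi^c_R([X_R])$, where $X_R$ denotes the base change to $R$ and the right-hand signature is the unique signature on $\GW(R)$ (every element of a real closed field being a square or the negative of one). Since $X_\sigma(R)=X_R(R)$ as a semi-algebraic set, we may reduce to the case $k=R$ real closed.

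Next, I would identify the signature $\sign:\GW(R)\cong\End_{\SH(R)}(\1_R)\to\mb{Z}\cong\End_\SH(\mb{S})$ with the map on endomorphism rings of the unit induced by a symmetric monoidal real realization functor $\mr{re}_R:\SH(R)\to\SH$. For $R=\mb{R}$, this is the classical Betti realization sending $\Sigma^\infty_{\mb{P}^1}X_+$ to $\Sigma^\infty X(\mb{R})_+$, in complete analogy with Remark~\ref{rem:rank}. For a general real closed field $R$, one uses the semi-algebraic realization of Delfs--Knebusch, or equivalently Bachmann's identification of $\SH(R)$ after $\rho$-inversion with classical stable homotopy.

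The key compatibility is that $\mr{re}_R$ commutes with the exceptional pushforward $q_!$: for quasi-projective $q:X\to\Spec R$, the spectrum $\mr{re}_R(q_!\1_X)$ represents compactly supported cohomology of the semi-algebraic space $X(R)^{\mr{sa}}$, and consequently its categorical trace is $\chi^t(X(R)^{\mr{sa}})$. Since categorical traces are preserved under symmetric monoidal functors, combining this with Proposition~\ref{prop:chi on quasi proj} yields
\[\sign\chi^c_R([X])=\tr(\id_{\mr{re}_R(q_!\1_X)})=\chi^t(X(R)^{\mr{sa}})\]
on classes of quasi-projective varieties. Both $\sign\chi^c_R$ and $[X]\mapsto\chi^t(X(R)^{\mr{sa}})$ extend to ring homomorphisms $K_0(\Var_R)\to\mb{Z}$ (the latter by additivity of the semi-algebraic Euler characteristic with compact support), so they agree on all of $K_0(\Var_R)$ by \cite[Chapter 2, Corollary 2.6.6]{CLNS:MotivicIntegration}.

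The main obstacle is verifying that the real realization interacts correctly with $q_!$ over an arbitrary real closed field rather than just $\mb{R}$. For $R=\mb{R}$ this is standard Betti realization theory; for general $R$ one either appeals to the Delfs--Knebusch six-functor formalism in semi-algebraic topology, to Bachmann's theorem identifying $\SH(R)[\rho^{-1}]$ with stable real \'etale spectra (under which $\chi^c$ becomes the compactly supported Euler characteristic of the real spectrum), or to Tarski's transfer principle to reduce semi-algebraic Euler characteristic computations from arbitrary $R$ back to $\mb{R}$.
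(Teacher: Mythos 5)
Your proposal is correct and follows essentially the same route as the paper: reduce to the real closed field via base change (the paper's $\sigma^*$, your Proposition~\ref{prop:field_ext}), apply a real closed (semialgebraic) Betti realization compatible with $q_!$ and categorical traces, and identify the induced map on $\End(\1)$ with the signature. The only point you leave implicit that the paper spells out is why $\mr{re}_R$ induces the signature for arbitrary real closed $R$ — the paper settles this by Morel's observation that the automorphism $[x:y]\mapsto[x:\pm y]$ of $\mb{P}^1$ is defined over $\mb{Z}$ and realizes to the degree $\pm 1$ self-map of $S^1$.
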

\begin{proof}
The real closed embedding $\sigma:k\to R$ induces a base change functor\footnote{Here, we conflate notation and also write $\sigma:\Spec{R}\to\Spec{k}$.} $\sigma^*:\SH(k)\to\SH(R)$, which is symmetric monoidal. There is a real closed Betti realization functor
\begin{align*}
    \mr{re}_R:\SH(R)&\to\SH\\
    \Sigma^\infty_{\mb{P}^1}X_+&\mapsto\Sigma^\infty X_\sigma(R)_+,
\end{align*}
where $X_\sigma(R)$ is the topological space underlying the semialgebraic set $X(R)$ under the ordering on $R$. When $R=\mb{R}$, this Betti realization functor is real Betti realization, which is well-known. Using the same argument outlined in Remark~\ref{rem:rank}, the composite $\mr{re}_R\circ\sigma^*$ sends $\chi^c(X)$ to $\chi^t(X_\sigma(R))$. Because $\sigma^*$ is monoidal, the real closed embedding induces $\sigma^*:\End_{\SH(k)}(\1_k)\to\End_{\SH(R)}(\1_R)$, which is the homomorphism $\GW(k)\to\GW(R)$ used to compute $\sign_\sigma$. We will thus have the desired result as soon as we know that 
\[\mr{re}_R:\GW(R)\cong\End_{\SH(R)}(\1_R)\to\End_\SH(\mb{S})\cong\mb{Z}\]
is the signature homomorphism. For $R=\mb{R}$, this is an observation of Morel, recorded in \cite[Remarks 2.3 (1)]{Levine-quadratic}: the automorphism $[x:y]\mapsto[x:\pm y]$ of $\mb{P}^1_k$ corresponds to $\langle\pm 1\rangle\in\GW(k)$ under Morel's degree map, and the real realization of this automorphism is the map
\begin{align}\label{eq:involution}
\begin{split}
    S^1&\mapsto S^1\\
    \theta&\mapsto \pm\theta,
\end{split}
\end{align}
which has Brouwer degree $\pm 1$. Thus $\mr{re}_\mb{R}$ induces the map
\begin{align*}
    \GW(\mb{R})&\to\mb{Z}\\
    \langle\pm 1\rangle&\mapsto\pm 1,
\end{align*}
which is the signature homomorphism. The same argument holds verbatim for arbitrary real closed fields, as the automorphism $[x:y]\to[x:\pm y]$ of $\mb{P}^1_k$ is defined over $\mb{Z}$, and any real closed realization of this map is again Equation~\ref{eq:involution}.
\end{proof}

\begin{rem}
A key observation of Pajwani--P\'al is that in characteristic 0, the discriminant of $\chi^c$ recovers Saito's determinant of $\ell$-adic cohomology, appropriately scaled \cite[Theorem 2.27]{PajwaniPal:YauZaslow}. This follows from work of Saito (see e.g.~\cite{Sai94,Sai97,Sai12,Ter18}). 

The necessary results of Saito are known for smooth proper varieties, whose classes generate $K_0(\Var_k)$ in characteristic 0. In positive characteristic, one would need analogous results for all quasi-projective varieties (which may be assumed to be smooth if one works over a perfect field and inverts $e$.). Alternatively, one could search for a realization functor sending the trace $\tr(\id_{q_!\1_X})$ to the determinant of the complex represented by $q_!\1_X$. Potentially relevant articles in this direction include \cite{Bei07,Bre11,MTW15}.
\end{rem}

\begin{rem}
There are many other invariants of quadratic forms beyond rank, signature, and discriminant. Taking any such invariant of $\chi^c(X)$ gives an invariant of $X$, and one can ask whether such an invariant already occurs in the literature. For example, the Hasse--Witt invariants of $\chi^c(X)$ have been studied by Saito \cite{Sai13}.
\end{rem}

\section{Modifications of $K_0(\Var_k)$}\label{sec:modifications}
There are various ways that $K_0(\Var_k)$ can be modified. For example, one can invert universal homeomorphisms, localize at $[\mb{A}^1_k]$, complete with respect to the dimension filtration, or localize at a topology finer than the Zariski topology. In this section, we will discuss a few modifications of $K_0(\Var_k)$ through which $\chi^c$ factors.

\subsection{Inverting universal homeomorphisms}\label{sec:universal homeo}
In positive characteristic, the lack of resolution of singularities poses a problem for many arguments that we might hope to make. A common workaround is to invert those maps that might prevent an alteration, which exist over any field, from being a resolution of singularities. To this end, we will recall a certain modification of the Grothendieck ring of varieties and show that $\chi^c$ factors through this modification.

\begin{defn}
A morphism $f:X\to Y$ is called a \textit{universal homeomorphism} if $f_{Y'}:X\times_YY'\to Y'$ is a homeomorphism (of underlying topological spaces) for every scheme $Y'\to Y$. Equivalently, $f$ is integral, surjective, and \textit{universally injective}, i.e.~$f|_{X(K)}:X(K)\to Y(K)$ is injective for every field $K$.
\end{defn}

\begin{defn}
Let $S$ be a scheme. Let
\[I^\uh_S:=([X]-[Y]:\exists\text{ universal homeomorphism }f:X\to Y)\]
be the ideal in $K_0(\Var_S)$ generated by differences of classes of universally homeomorphic $S$-varieties. Define the \textit{Grothendieck ring of $S$-varieties up to universal homeomorphism} as the quotient $K^\uh_0(\Var_S):=K_0(\Var_S)/I^\uh_S$.
\end{defn}

See \cite[Chapter 2, \S 4]{CLNS:MotivicIntegration} for more details on $K^\uh_0(\Var_S)$. In characteristic 0, the ideal $I^\uh_S$ is trivial, so that $K^\uh_0(\Var_S)\cong K_0(\Var_S)$ \cite[Chapter 2, Corollary 4.4.7]{CLNS:MotivicIntegration}. It is unknown whether $I^\uh_k=(0)$ when $\Char{k}>0$, but the following lemma and its corollary will be useful for us.

\begin{lem}\label{lem:r_mot universal homeo}
Let $S$ be a noetherian scheme. Let $\mc{P}$ denote the set of primes not invertible in $\mc{O}_S$. Let $L_\mc{P}:K_0^\triangle(\SH^\omega(S))\to K_0^\triangle(\SH^\omega(S)[\mc{P}^{-1}])$ denote localization at the set of non-invertible primes. Then $L_\mc{P}\circ r_\mot:K_0(\Var_S)\to K_0^\triangle(\SH^\omega(S)[\mc{P}^{-1}])$ factors through $K^\uh_0(\Var_S)$.
\end{lem}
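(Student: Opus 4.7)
The plan is to verify that $L_\mc{P}\circ r_\mot$ annihilates the generators of the ideal $I^\uh_S$. So fix a universal homeomorphism $f:X\to Y$ of $S$-varieties, with structure maps $p:X\to S$ and $q:Y\to S$, so that $p=q\circ f$. I want to show $L_\mc{P}(r_\mot([X]))=L_\mc{P}(r_\mot([Y]))$, i.e., $[p_!\1_X]=[q_!\1_Y]$ in $K_0^\triangle(\SH^\omega(S)[\mc{P}^{-1}])$.

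First I would reduce the statement to one on $Y$. Since $p_!\simeq q_!\circ f_!$ and $q_!:\SH^\omega(Y)[\mc{P}^{-1}]\to\SH^\omega(S)[\mc{P}^{-1}]$ is exact and symmetric monoidal, it induces a ring homomorphism on Grothendieck rings (by the same argument used for $L_\mc{P}$ in the corollary preceding Lemma~\ref{lem:rigid Euler}). Hence it suffices to show
\[
[f_!\1_X]=[\1_Y]\quad\text{in }K_0^\triangle(\SH^\omega(Y)[\mc{P}^{-1}]),
\]
which will follow once we exhibit an isomorphism $f_!\1_X\simeq\1_Y$ in $\SH(Y)[\mc{P}^{-1}]$.

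Next I would verify that $f_!\simeq f_*$. Since $X$ and $Y$ are of finite type over $S$ and $f$ is integral (being a universal homeomorphism), the morphism $f$ is finite, hence proper, so the natural transformation $f_!\to f_*$ is an equivalence by proper base change in $\SH$.

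The main step is to invoke the invariance of $\SH[\mc{P}^{-1}]$ under universal homeomorphisms: for any universal homeomorphism $f:X\to Y$ of noetherian schemes, the pullback $f^*:\SH(Y)[\mc{P}^{-1}]\to\SH(X)[\mc{P}^{-1}]$ is a symmetric monoidal equivalence, where $\mc{P}$ contains the residue characteristics. This is proved by Elmanto--Khan \cite{ElmantoKhan} (and is classical when $f$ is the relative Frobenius, going back to work of Bachmann--Hoyois). Granting this, the right adjoint $f_*$ is a quasi-inverse to $f^*$, so
\[
f_*\1_X\;\simeq\;f_*f^*\1_Y\;\simeq\;\1_Y.
\]
Combined with $f_!\simeq f_*$, this gives $f_!\1_X\simeq\1_Y$, completing the proof.

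The only nontrivial input is the third step, namely the universal-homeomorphism invariance of $\SH[\mc{P}^{-1}]$; everything else is a formal consequence of $r_\mot$ being defined by $[X]\mapsto[p_!\1_X]$ together with proper base change. In particular, in characteristic $0$ the statement is vacuous, since $I^\uh_S$ is already trivial by \cite[Chapter 2, Corollary 4.4.7]{CLNS:MotivicIntegration}.
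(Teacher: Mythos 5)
Your proof is correct and follows essentially the same route as the paper's: reduce to the generators of $I^\uh_S$, invoke Elmanto--Khan's invariance of $\SH[\mc{P}^{-1}]$ under universal homeomorphisms to obtain $f_*\1_X\simeq\1_Y$, and identify $f_!$ with $f_*$ by checking that $f$ is proper (the paper deduces properness from affineness plus universal closedness rather than from finiteness, but this is immaterial). One small inaccuracy: $q_!$ is exact but not symmetric monoidal, so it only induces a group homomorphism on Grothendieck groups --- however, since you actually produce an isomorphism $f_!\1_X\simeq\1_Y$ in $\SH(Y)[\mc{P}^{-1}]$, applying the functor $q_!$ directly gives $p_!\1_X\simeq q_!\1_Y$ and nothing is lost.
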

\begin{proof}
By Lemma~\ref{lem:motivic realization}, it suffices to prove that if $p:X\to S$ and $q:Y\to S$ are quasi-projective varieties that are universally homeomorphic, then $[p_!\1_X]=[q_!\1_Y]$ in $K_0^\triangle(\SH^\omega(S)[\mc{P}^{-1}])$. Let $f:X\to Y$ be a universal homeomorphism, so that we have a commutative triangle
\[\begin{tikzcd}
X\arrow[rr,"f"]\arrow[dr,"p"'] && Y.\arrow[dl,"q"]\\
& S &
\end{tikzcd}\]
Any prime not invertible in $\mc{O}_X$ must be contained in $\mc{P}$, so \cite[Proof of Theorem 2.1.1]{ElmantoKhan} implies that the unit $\mr{id}\to f_*f^*$ of the adjunction
\[f^*:\SH(Y)[\mc{P}^{-1}]\rightleftarrows\SH(X)[\mc{P}^{-1}]:f_*\]
is invertible. In particular, $q_!\1_Y\cong q_!f_*f^*\1_Y$. Since $f^*$ is monoidal, this implies that $q_!\1_Y\cong q_!f_*\1_X$. If $f$ were proper, then $f_*\simeq f_!$. We would then have $q_!f_*\1_X\cong q_!f_!\1_X\cong p_!\1_X$ and hence $q_!\1_Y\cong p_!\1_X$. This would immediately imply that $[p_!\1_X]=[q_!\1_Y]$ in $K_0^\triangle(\SH^\omega(S)[\mc{P}^{-1}])$.

It thus remains to show that a universal homeomorphism between quasi-projective schemes is proper. By \cite[\href{https://stacks.math.columbia.edu/tag/01VX}{Lemma 01VX}]{stacks} and \cite[\href{https://stacks.math.columbia.edu/tag/01T8}{Lemma 01T8}]{stacks}, any morphism between quasi-projective schemes is locally of finite type. By \cite[\href{https://stacks.math.columbia.edu/tag/04DE}{Lemma 04DE}]{stacks} a universal homeomorphism is affine and in particular quasi-compact and separated. Finally, it is universally closed by definition so it is proper.
\end{proof}

\begin{cor}\label{cor:chi universal homeo}
Let $k$ be a field. Then $\chi^c$ factors through $K^\uh_0(\Var_k)$.
\end{cor}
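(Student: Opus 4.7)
The plan is to show that the corollary follows almost immediately by unwinding the definition of $\chi^c$ and applying Lemma~\ref{lem:r_mot universal homeo}. First I would recall that by Definition~\ref{def:chi^c}, the compactly supported Euler characteristic is the composite
\[\chi^c = \chi^\omega \circ r_\mot : K_0(\Var_k) \longrightarrow K_0^\triangle(\SH^\omega(k)[e^{-1}]) \longrightarrow \GW(k)[e^{-1}],\]
where $e$ is the exponential characteristic of $k$ and the copy of $r_\mot$ appearing here is implicitly post-composed with the localization $L_e$, since that is the domain of $\chi^\omega$ from Lemma~\ref{lem:compact Euler}.

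Next I would identify the set $\mc{P}$ appearing in Lemma~\ref{lem:r_mot universal homeo} in the case $S = \Spec k$. The primes not invertible in the field $k$ are precisely $\{e\}$ when $\Char k = e > 0$, and empty when $\Char k = 0$; in either case, $\mc{P} = \{e\}$ suffices (with the convention that $L_1$ is the identity). Applying Lemma~\ref{lem:r_mot universal homeo} with this $\mc{P}$ then gives that $L_e \circ r_\mot$ factors through $K_0^\uh(\Var_k)$, i.e.\ there exists a unique ring homomorphism
\[\bar r_\mot : K_0^\uh(\Var_k) \longrightarrow K_0^\triangle(\SH^\omega(k)[e^{-1}])\]
making the obvious triangle with the quotient $K_0(\Var_k) \twoheadrightarrow K_0^\uh(\Var_k)$ commute.

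Finally, post-composing $\bar r_\mot$ with $\chi^\omega$ yields the desired factorization $\chi^c = \chi^\omega \circ \bar r_\mot \circ \pi$, where $\pi : K_0(\Var_k) \twoheadrightarrow K_0^\uh(\Var_k)$ is the quotient. There is no genuine obstacle here: all of the real work is done in Lemma~\ref{lem:r_mot universal homeo}, whose proof uses that a universal homeomorphism between quasi-projective schemes is proper and that $f^\ast \dashv f_\ast$ becomes an equivalence after inverting the residual characteristics. The only thing to check is that the two notions of ``the set of non-invertible primes'' match, which is transparent for $S = \Spec k$.
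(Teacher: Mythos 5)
Your proposal is correct and follows essentially the same route as the paper: unwind $\chi^c=\chi^\omega\circ L_e\circ r_\mot$, identify the set of non-invertible primes for $S=\Spec k$, and apply Lemma~\ref{lem:r_mot universal homeo}. The only cosmetic difference is that the paper disposes of characteristic $0$ up front via $K_0(\Var_k)\cong K^\uh_0(\Var_k)$, whereas you fold that case into the lemma with $\mc{P}=\varnothing$; both are fine.
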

\begin{proof}
If $\Char{k}=0$, then $K_0(\Var_k)\cong K^\uh_0(\Var_k)$ and there is nothing to show. Otherwise $\mc{P}=\{\Char{k}\}$. Since $\chi^c=\chi^\omega\circ L_e\circ r_\mot$, the result follows from Lemma~\ref{lem:r_mot universal homeo}.
\end{proof}

\begin{rem}
Corollary~\ref{cor:chi universal homeo} implies that various results in \cite{PRV24} extend from characteristic 0 to all odd characteristics.
\end{rem}

\subsection{Localization at the Lefschetz motive}
We now show that $\chi^c$ is insensitive to localization at $[\mb{A}^1_k]$ \cite[Chapter 2, \S 4.2]{CLNS:MotivicIntegration}.

\begin{defn}
Let $S$ be a scheme. Let $\mb{L}_S:=[\mb{A}^1_S]\in K_0(\Var_S)$, which is called the \textit{Lefschetz motive}. Let $\mathscr{M}_S:=K_0(\Var_S)[\mb{L}_S^{-1}]$.
\end{defn}

\begin{prop}\label{prop:chi lefschetz local}
Let $k$ be a field. Then $\chi^c$ factors through $\mathscr{M}_k$.
\end{prop}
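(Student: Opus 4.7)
The plan is to invoke the universal property of localization of commutative rings. Since $\chi^c$ is a ring homomorphism (either directly, or after Corollary~\ref{cor:gw inside gw[1/p]}, into $\GW(k)$), to factor it through $\mathscr{M}_k = K_0(\Var_k)[\mathbb{L}_k^{-1}]$ it suffices to check that $\chi^c(\mathbb{L}_k)$ is a unit in the target ring.

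First, I would compute $\chi^c(\mathbb{L}_k) = \chi^c([\mathbb{A}^1_k]) = \langle -1 \rangle$, which is exactly the $n=1$ case of Corollary~\ref{cor:chi A^n}. Next, I would observe that $\langle -1 \rangle$ is a unit in $\GW(k)$ (and hence in $\GW(k)[e^{-1}]$), since $\langle -1 \rangle \cdot \langle -1 \rangle = \langle (-1)(-1) \rangle = \langle 1 \rangle = 1$. In other words, $\langle -1 \rangle$ is its own multiplicative inverse.

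By the universal property of the localization $K_0(\Var_k) \to K_0(\Var_k)[\mathbb{L}_k^{-1}]$, the ring homomorphism $\chi^c$ therefore descends uniquely to a ring homomorphism $\mathscr{M}_k \to \GW(k)$ (or $\GW(k)[e^{-1}]$ in even characteristic) such that the composite with the localization map is $\chi^c$.

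There is no real obstacle here: this is a one-line consequence of the universal property together with the computation $\chi^c(\mathbb{A}^1_k) = \langle -1 \rangle$ and the self-inverse identity $\langle -1 \rangle^2 = \langle 1 \rangle$.
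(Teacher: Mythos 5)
Your proof is correct and is essentially identical to the paper's: both invoke the universal property of localization, compute $\chi^c(\mb{L}_k)=\langle -1\rangle$ via Corollary~\ref{cor:chi A^n}, and observe that $\langle -1\rangle$ is its own inverse. (The paper keeps the target as $\GW(k)[e^{-1}]$ throughout this section, but that is only a cosmetic difference.)
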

\begin{proof}
By the universal property of $\mathscr{M}_k$, it suffices to verify that $\chi^c(\mb{L}_k)$ is invertible in $\GW(k)[e^{-1}]$ \cite[Paragraph (4.2.1)]{CLNS:MotivicIntegration}. We have computed $\chi^c(\mb{L}_k)=\langle -1\rangle$ (Corollary~\ref{cor:chi A^n}), which has multiplicative inverse $\langle -1\rangle$.
\end{proof}

Note that we can similarly localize the Grothendieck ring up to universal homeomorphisms as well.

\begin{defn}
Let $S$ be a scheme. Let $\mb{L}^\uh_S:=[\mb{A}^1_S]\in K^\uh_0(\Var_S)$. We can then define $\mathscr{M}^\uh_S:=K^\uh_0(\Var_S)[(\mb{L}^\uh_S)^{-1}]$.
\end{defn}

\begin{prop}\label{prop:chi uh lefschetz local}
Let $k$ be a field. Then $\chi^c$ factors through $\mathscr{M}^\uh_k$.
\end{prop}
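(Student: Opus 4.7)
The plan is to combine Corollary~\ref{cor:chi universal homeo} with the argument used for Proposition~\ref{prop:chi lefschetz local}, since $\mathscr{M}^\uh_k$ is obtained from $K_0(\Var_k)$ by first quotienting by the universal homeomorphism ideal and then inverting the Lefschetz class. There is no genuine obstacle to overcome here; the task is simply to verify that both universal properties apply in sequence.

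First I would invoke Corollary~\ref{cor:chi universal homeo} to produce an induced ring homomorphism
\[\bar\chi^c: K^\uh_0(\Var_k) \to \GW(k)[e^{-1}]\]
satisfying $\bar\chi^c([X]) = \chi^c([X])$ on quasi-projective varieties. By construction of the quotient map $K_0(\Var_k) \to K^\uh_0(\Var_k)$, the class $\mb{L}^\uh_k$ is the image of $\mb{L}_k$, so $\bar\chi^c(\mb{L}^\uh_k) = \chi^c(\mb{L}_k) = \langle -1\rangle$ by Corollary~\ref{cor:chi A^n}.

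Next I would appeal to the universal property of localization at a single element: $\bar\chi^c$ extends (uniquely) to a ring homomorphism $\mathscr{M}^\uh_k \to \GW(k)[e^{-1}]$ precisely when $\bar\chi^c(\mb{L}^\uh_k)$ is a unit in $\GW(k)[e^{-1}]$. Since $\langle -1\rangle \cdot \langle -1\rangle = \langle 1\rangle$, the element $\langle -1\rangle$ is its own multiplicative inverse, so the required invertibility holds, and $\chi^c$ factors through $\mathscr{M}^\uh_k$ as claimed.
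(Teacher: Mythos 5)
Your proof is correct and matches the substance of the paper's argument: both rest on Corollary~\ref{cor:chi universal homeo} together with the invertibility of $\chi^c(\mb{L}_k)=\langle -1\rangle$. The only cosmetic difference is organizational — the paper observes that $\mathscr{M}^\uh_k$ is the pushout of $K^\uh_0(\Var_k)$ and $\mathscr{M}_k$ over $K_0(\Var_k)$ and uses the two existing factorizations, whereas you apply the quotient and localization universal properties in sequence; the two arguments are interchangeable.
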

\begin{proof}
We have already seen that $\chi^c$ factors through $K^\uh_0(\Var_k)$ (Corollary~\ref{cor:chi universal homeo}) and through $\mathscr{M}_k$ (Proposition~\ref{prop:chi lefschetz local}). It thus suffices to note that the following diagram
\begin{equation}\label{eq:muh pushout}
\begin{tikzcd}
K_0(\Var_k)\arrow[r,"-/I^\uh_k"]\arrow[d,"\mb{L}^{-1}_k"'] & K^\uh_0(\Var_k)\arrow[d,"(\mb{L}^\uh_k)^{-1}"]\\
\mathscr{M}_k\arrow[r,"-/I^\uh_k"] & \mathscr{M}^\uh_k
\end{tikzcd}
\end{equation}
is a pushout, since this is a diagram of commutative rings and localization commutes with quotients for commutative rings. Thus there exists a unique dashed arrow completing the following diagram:
\begin{equation}\label{eq:muh factors}
\begin{tikzcd}
K_0(\Var_k)\arrow[r]\arrow[d] & K^\uh_0(\Var_k)\arrow[d]\arrow[ddr,bend left=20,"\text{Corollary~\ref{cor:chi universal homeo}}"] &\\
\mathscr{M}_k\arrow[r]\arrow[drr,bend right=20,"\text{Proposition~\ref{prop:chi lefschetz local}}"'] & \mathscr{M}^\uh_k\arrow[dr,dashed] &\\
&& \GW(k)[e^{-1}].
\end{tikzcd}
\end{equation}
Diagram~\ref{eq:muh factors} commutes, and hence $\chi^c$ factors through $\mathscr{M}^\uh_k$.
\end{proof}

\subsection{Topological localization}
As mentioned in the proof of Corollary~\ref{cor:chi mult fiber bundles}, classes in $K_0(\Var_k)$ factor over Zariski-locally trivial fiber bundles. Given a topology $\tau$ finer than the Zariski topology, one can impose multiplicativity over $\tau$-locally trivial fiber bundles to obtain the quotient ring $K_0^\tau(\Var_k)$. One can then ask whether a motivic measure factors through $K_0^\tau(\Var_k)$.

\begin{defn}
Let $\tau$ be a Grothendieck topology on $\Var_k$. A finite type morphism $f:Y\to X$ is called a \textit{$\tau$-locally trivial fiber bundle with fiber $F$} if there exist a $\tau$ covering $\{p_i:U_i\to X\}_{i\in I}$ and isomorphisms of $U_i$-schemes $Y\times_X U_i\cong F\times_k U_i$ for all $i\in I$.

Now consider the ideal $I^\tau\subset K_0(\Var_k)$ generated by classes of the form $[E]-[F]\cdot [B]$, where $E\to B$ is a $\tau$-locally trivial fiber bundle with fiber $F$. Define the \textit{$\tau$ local Grothendieck ring of varieties} as the quotient 
\[K_0^\tau(\Var_k):=K_0(\Var_k)/I^\tau.\]
A motivic measure that factors through $K_0^\tau(\Var_k)$ is said to be \textit{$\tau$ local}.
\end{defn}

The \'etale topology drastically culls classes in $K_0(\Var_k)$. Indeed, the \'etale Euler characteristic induces an isomorphism $K_0^\et(\Var_k)\to\mb{Z}$ in characteristic 0 (see e.g.~\cite{EvgenyShinder}). In particular, if a motivic measure $\mu$ is strictly more interesting than the \'etale Euler characteristic, then $\mu$ should not be \'etale local. The motivic Euler characteristic is an example of such a motivic measure.

\begin{prop}
Let $k$ be a field of characteristic 0. Assume that $k$ is not quadratically closed. Then $\chi^c$ does not factor through $K_0^\et(\Var_k)$. 
\end{prop}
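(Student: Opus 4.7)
The plan is to argue by contradiction: I will exhibit a variety whose class collapses in $K_0^\et(\Var_k)$ but whose motivic Euler characteristic retains genuinely quadratic information. Since $k$ is not quadratically closed, choose $a \in k^\times$ with $a \notin (k^\times)^2$ and set $L := k(\sqrt{a})$. The extension $L/k$ is finite Galois of degree $2$, so $\pi : \Spec L \to \Spec k$ is \'etale. Pulling $\pi$ back along itself yields $\Spec L \times_{\Spec k} \Spec L \cong \Spec L \sqcup \Spec L$, which exhibits $\pi$ as an \'etale-locally trivial fiber bundle with fiber $\Spec k \sqcup \Spec k$. Consequently $[\Spec L] - 2[\Spec k] \in I^\et$, and so $[\Spec L] = 2[\Spec k]$ in $K_0^\et(\Var_k)$.

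If $\chi^c$ factored through $K_0^\et(\Var_k)$, we would therefore have $\chi^c(\Spec L) = 2\chi^c(\Spec k) = 2\langle 1\rangle$ in $\GW(k)$. On the other hand, Example~\ref{ex:motivic = cat}(i) gives $\chi^c(\Spec L) = \Tr_{L/k}\langle 1\rangle$, and computing the trace form in the $k$-basis $\{1,\sqrt{a}\}$ yields the diagonal form $\langle 2, 2a\rangle$. To rule out the equality $\langle 2, 2a\rangle = \langle 1, 1\rangle$ in $\GW(k)$, I would compare discriminants: $\disc\langle 2, 2a\rangle \equiv a \pmod{(k^\times)^2}$, while $\disc\langle 1, 1\rangle = 1$. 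Since $a$ is a non-square, the two classes are distinct in $\GW(k)$, producing the desired contradiction.

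The only step requiring any care is the \'etale-local triviality of $\pi$, which follows immediately from the self-pullback computation; there is no real obstacle. (One could alternatively invoke Shinder's isomorphism $K_0^\et(\Var_k) \cong \mb{Z}$ and then observe that any variety with $\chi^c \notin \mb{Z}\cdot\langle 1\rangle$ furnishes the contradiction, but the direct argument via a single explicit relation in $I^\et$ is more transparent.)
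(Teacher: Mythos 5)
Your argument is correct, and it takes a genuinely different route from the paper. The paper's proof is indirect: it invokes the isomorphism $K_0^\et(\Var_k)\cong\mb{Z}$ in characteristic $0$ (due to Shinder et al.) together with the later-proved surjectivity of $\chi^c$ onto $\GW(k)$ (Proposition~\ref{prop:chi^c surjective}), and concludes that a surjection onto a ring strictly larger than $\mb{Z}$ cannot factor through $\mb{Z}$. You instead exhibit a single explicit relation in $I^\et$ that $\chi^c$ violates: the self-pullback computation $\Spec L\times_{\Spec k}\Spec L\cong\Spec L\sqcup\Spec L$ does show that $\Spec{k(\sqrt a)}\to\Spec k$ is \'etale-locally trivial with fiber $\Spec k\sqcup\Spec k$ in the sense of the paper's definition, so $[\Spec L]=2$ in $K_0^\et(\Var_k)$, while $\chi^c(\Spec L)=\Tr_{L/k}\langle 1\rangle=\langle 2,2a\rangle\neq\langle 1,1\rangle$ by the discriminant comparison (which is legitimate: by Witt cancellation, equality of same-rank classes in $\GW(k)$ forces isometry). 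Your approach buys two things: it is self-contained (no forward reference to the surjectivity proposition and no appeal to the external computation of $K_0^\et(\Var_k)$), and it actually proves more, since nothing in your argument uses characteristic $0$ --- only that $L/k$ is separable, so the conclusion holds over any non-quadratically-closed field of characteristic not $2$. The paper's version, by contrast, yields the slightly stronger structural statement that the entire image $\GW(k)$ obstructs factorization, not just one class.
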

\begin{proof}
We will later prove that $\chi^c$ surjects onto $\GW(k)$ (Proposition~\ref{prop:chi^c surjective}), which surjects onto but is not isomorphic to $\mb{Z}$ whenever $k$ is not quadratically closed. In particular, $\chi^c$ cannot factor through $K_0^\et(\Var_k)\cong\mb{Z}$.
\end{proof}

Sitting between the Zariski and \'etale topologies is the Nisnevich topology, which is built from \'etale covers that induce isomorphisms on residue fields of points. Recall that the unstable motivic homotopy category consists of $\mb{A}^1$-local Nisnevich sheaves on smooth schemes, and that $\chi^c$ factors through the Grothendieck ring of the stable motivic homotopy category. It is therefore unsurprising that $\chi^c$ is Nisnevich local. However, it turns out that all motivic measures are Nisnevich local.

\begin{prop}\label{prop:Nisnevich local}
Let $\Nis$ denote the Nisnevich topology. There is a ring isomorphism $K_0(\Var_k)\cong K_0^\Nis(\Var_k)$, so any motivic measure is Nisnevich local.
\end{prop}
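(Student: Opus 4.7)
The plan is to show directly that the ideal $I^\Nis \subseteq K_0(\Var_k)$ is zero; the displayed isomorphism and the Nisnevich-locality of every motivic measure then follow at once. Concretely, given a Nisnevich-locally trivial fiber bundle $\pi : E \to B$ with fiber $F$, I need $[E] = [F] \cdot [B]$ in $K_0(\Var_k)$, and I proceed by Noetherian induction on $B$.

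The crucial input is the splitting property of the Nisnevich topology: for any Nisnevich cover $\{f_i : U_i \to X\}$ and any point $x \in X$, there exist an index $i$ and a point $u \in U_i$ with $f_i(u) = x$ such that the induced map $\kappa(x) \to \kappa(u)$ is an isomorphism. Fix a Nisnevich cover $\{p_i : U_i \to B\}$ together with trivializations $E \times_B U_i \cong F \times_k U_i$ over $U_i$. Choose a generic point $\eta$ of some irreducible component of $B$; the splitting property supplies a $B$-morphism $\Spec \kappa(\eta) \to U_i$ for some index $i$. Since $p_i$ is \'etale, and in particular finitely presented, and since $\Spec \kappa(\eta)$ is the limit of the open neighborhoods of $\eta$ in $B$, this morphism spreads out to a section $s : V \to U_i$ of $p_i$ over some nonempty open $V \subseteq B$ containing $\eta$.

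Pulling the trivialization $E \times_B U_i \cong F \times_k U_i$ back along $s$ produces an isomorphism $E|_V \cong F \times_k V$ of $V$-schemes, so $[E|_V] = [F] \cdot [V]$ in $K_0(\Var_k)$. The complement $B \setminus V$ is a proper closed subvariety of $B$, and $E|_{B \setminus V} \to B \setminus V$ inherits a Nisnevich-local trivialization with fiber $F$ by pulling back the cover, so the inductive hypothesis gives $[E|_{B \setminus V}] = [F] \cdot [B \setminus V]$. Applying the scissor relation to the decomposition $B = V \sqcup (B \setminus V)$ then yields $[E] = [F] \cdot [B]$, which is the required identity in $K_0(\Var_k)$.

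There is no real obstacle here: the argument is essentially bookkeeping around the Nisnevich splitting property combined with the standard spreading-out of sections of finitely presented \'etale maps. The only point to verify is that Nisnevich-local trivializations are stable under base change, which is immediate because Nisnevich covers themselves are stable under base change. By contrast, attempting the same argument for the \'etale topology fails precisely because one cannot in general find a section after any Zariski shrinking, consistent with the fact that $\chi^c$ is not \'etale local.
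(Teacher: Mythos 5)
Your proof is correct and takes essentially the same route as the paper's: both use the Nisnevich splitting property at a generic point of $B$ to produce a dense open $V\subseteq B$ over which the cover admits a section and hence the bundle trivializes (the paper finds an irreducible component of some $U_i$ mapping birationally to $B$ and shrinks to the locus where it is an isomorphism, while you spread out the section of the finitely presented \'etale map $p_i$ --- the same mechanism in different clothing), and then conclude by Noetherian induction on the closed complement. The only implicit point is that one should replace $B$ by its reduction so that $\mathcal{O}_{B,\eta}=\kappa(\eta)$ at a generic point and the limit description of $\Spec\kappa(\eta)$ holds, which is harmless since $[B]=[B_{\mathrm{red}}]$ in $K_0(\Var_k)$.
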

\begin{proof}
Let $f:E\to B$ be a Nisnevich-locally trivial fibration with fiber $F$. We will show that $[E]=[F]\cdot[B]$ in $K_0(\Var_k)$, which will yield the claimed ring isomorphism. Finite type schemes over a field are noetherian,\footnote{By convention, $\Var_S$ is the category of $S$-varieties of finite presentation, which implies finite type.} so $B$ has finitely many irreducible components $B_1,\ldots,B_n$. When a scheme has finitely many irreducible components, these components are pairwise disjoint and hence $[B]=[B_1]+\ldots+[B_n]$. Since $\chi^c$ is additive, we may therefore assume that $B$ is irreducible. Let $\eta_B$ denote the generic point of $B$.

Let $\mc{U}:=\{p_i:U_i\to B\}_{i\in I}$ be a Nisnevich cover that trivializes $f$. Nisnevich covers are jointly surjective on all (not just closed) points, so there exists $(p,U)\in\mc{U}$ with $p^{-1}(\eta_B)\in U$. In particular, there exists an irreducible component $V$ of $U$ with generic point $\eta_V$ such that $p$ induces an isomorphism $k(\eta_V)\cong k(\eta_B)$. In other words, $p|_V:V\to B$ is a birational morphism.

By further restricting $p|_V$ to the open locus of definition, there exist non-empty open subschemes $V'\subseteq V$ and $B'\subseteq B$ such that $p|_{V'}:V'\to B'$ is an isomorphism. Since $f$ is trivial over $U$ by assumption and $V'\subset U$ is an open immersion, we have $E\times_B V'\cong F\times_k V'$. Since $V'\cong B'$ as $k$-schemes, we thus have $E\times_B B'\cong F\times_k B'$. Thus $[f^{-1}(B')]=[F]\cdot[B']$. Since $B'$ is a non-empty open subset of $B$, the complement $B_1 := B\setminus B'$ is a proper closed subscheme of $B$. Then $[E] = [f^{-1}(B')] + [f^{-1}(B_1)]$ and the restriction of $\mc{U}$ to $B_1$ gives a Nisnevich cover which trivializes $f$. Thus we may repeat the argument on $B_1$ to obtain a non-empty open $B_1' \subset B_1$ with $[f^{-1}(B_1')] = [F]\cdot[B_1']$ and set $B_2 = B_1 \setminus B_1'$ and repeat. Continuing in this way, we get a decreasing sequence of closed subschemes $\ldots \subset B_{m+1} \subset B_m \subset \ldots \subset B_0 = B$ with complements $B_m' = B_m \setminus B_{m+1}$ (where we set $B' =\colon B_0'$) such that $[f^{-1}(B_m')] = [F] \cdot [B_m']$. Since $B$ is noetherian, this chain must terminate with $B_N' = B_N$ for some $N \gg 0$ and so 
\[[E] = \sum_{m = 0}^N [f^{-1}(B_m')] = [F] \cdot \sum_{m=0}^N [B_m'] = [F] \cdot [B].\qedhere\]
\end{proof}

Finer than the Nisnevich topology (but incomparable to the \'etale topology) is the cdh topology. This is obtained from the Nisnevich topology by adding in covers $\{p:Y\to X,i:Z\to X\}$, where $p$ is proper, $i$ is a closed immersion,
\begin{equation}\label{eq:blow up square}
\begin{tikzcd}
    E\arrow[r]\arrow[d] & Y\arrow[d,"p"]\\
    Z\arrow[r,"i"] & X
\end{tikzcd}
\end{equation}
is a Cartesian diagram, and $Y-E\cong X-Z$. Diagram~\ref{eq:blow up square} is called an \textit{abstract blow up square}. Since $[E]=[Y]+[Z]-[X]\in K_0(\Var_k)$ for any abstract blow up square, it follows that $K_0(\Var_k)$ cannot distinguish between the Nisnevich and cdh topologies.

\begin{prop}\label{prop:cdh}
There is a ring isomorphism $K_0^\Nis(\Var_k)\cong K_0^\cdh(\Var_k)$, so any motivic measure is $\cdh$ local.
\end{prop}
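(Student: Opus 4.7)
Since every Nisnevich cover is a cdh cover, we have the containment $I^\Nis \subseteq I^\cdh$ of ideals in $K_0(\Var_k)$, and hence a surjective ring homomorphism $K_0^\Nis(\Var_k) \twoheadrightarrow K_0^\cdh(\Var_k)$. By Proposition~\ref{prop:Nisnevich local}, the source is isomorphic to $K_0(\Var_k)$. The plan is therefore to show that every generator of $I^\cdh$ already vanishes in $K_0(\Var_k)$: for any cdh-locally trivial fiber bundle $f : E \to B$ with fiber $F$, the identity $[E] = [F]\cdot[B]$ already holds in $K_0(\Var_k)$.

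I would argue by noetherian induction on $B$, paralleling the strategy used in Proposition~\ref{prop:Nisnevich local}. As there, first reduce to the case of $B$ irreducible. The key structural input is that cdh covers are generated by Nisnevich covers together with abstract blow up covers $\{p : Y \to X,\; i : Z \to X\}$; for any such blow up cover, the restriction to the open complement $U := X \setminus i(Z)$ is trivial because $p$ restricts to an isomorphism over $U$. Propagating this generic triviality through the finitely many layers of composition expressing a given cdh cover yields the following claim: any cdh cover $\{p_i : U_i \to B\}$ admits a dense open $V \subseteq B$ such that the pulled-back cover $\{p_i^{-1}(V) \to V\}$ is refined by a Nisnevich cover of $V$.

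Given this, let $\{p_i : U_i \to B\}$ be a cdh cover trivializing $f$ and let $V \subseteq B$ be as in the claim. Then $f^{-1}(V) \to V$ is Nisnevich-locally trivial with fiber $F$, so Proposition~\ref{prop:Nisnevich local} gives $[f^{-1}(V)] = [F] \cdot [V]$ in $K_0(\Var_k)$. Let $Z := B \setminus V$, with its reduced induced structure; this is a proper closed subscheme of $B$. Base-changing the original cdh cover along $Z \hookrightarrow B$ yields a cdh cover trivializing $f|_Z : f^{-1}(Z) \to Z$, and since $\dim Z < \dim B$, the induction hypothesis provides $[f^{-1}(Z)] = [F] \cdot [Z]$. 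The scissor relation then yields
\[
[E] = [f^{-1}(V)] + [f^{-1}(Z)] = [F] \cdot ([V] + [Z]) = [F] \cdot [B],
\]
completing the induction.

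The main obstacle is making rigorous the structural claim that cdh covers admit a generically Nisnevich refinement. This is standard in the literature on the cdh topology (essentially due to Suslin--Voevodsky), but executing it cleanly requires either appealing to a generation result for cdh covers or unwinding the composition defining a given cover and verifying the refinement property by a secondary induction on the number of abstract blow up covers involved. Once this input is available, the rest of the argument is formal: noetherian induction together with additivity of $[\cdot]$ propagates the Nisnevich-local identity through the dimension filtration of $B$, and the hint following diagram~\ref{eq:blow up square} in the text (namely $[E] = [Y] + [Z] - [X]$ for an abstract blow up square) is recovered as the bottom layer of this induction.
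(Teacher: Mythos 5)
Your argument is correct in outline but takes a genuinely different route from the paper. The paper's proof is very short: it cites Kuijper's presentation of $K_0(\Var_k)$ with proper varieties as generators and abstract blow up squares as relations, observes that cdh covers are generated by Nisnevich covers together with abstract blow up covers, and concludes via Proposition~\ref{prop:Nisnevich local}. You instead prove directly that every cdh-locally trivial fiber bundle satisfies $[E]=[F]\cdot[B]$ in $K_0(\Var_k)$, by noetherian induction: Nisnevich-local triviality over a dense open $V$ handles $[f^{-1}(V)]$, the induction hypothesis handles the closed complement, and the scissor relation assembles the two. Your approach is more self-contained (it avoids the presentation theorem entirely) and makes explicit exactly where the abstract blow up relation enters, namely as the base case of the induction; the paper's approach is shorter but leans on an external generators-and-relations result and is, if anything, terser than yours about how a general cdh cover decomposes.

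The one load-bearing step you leave as a black box is the structural claim that a cdh cover of an irreducible $B$ is refined by a Nisnevich cover over a dense open $V\subseteq B$. This is true, and you correctly identify it as the crux, but as written it is an assertion rather than a proof: a general cover in the generated topology is a finite tower of refinements and compositions of the elementary covers, and the dense opens produced at each layer live on the members of the previous layer rather than on $B$, so pushing them down to a single dense open of $B$ requires a secondary induction on the length of the tower (or an appeal to a refinement theorem of Suslin--Voevodsky type, e.g.\ that every cdh cover is refined by a Nisnevich cover of a composition of abstract blow ups, the latter being an isomorphism over a dense open). Supplying that citation or carrying out that induction would close the gap; with it in place, the rest of your argument is complete and correct. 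One cosmetic point: base-changing the trivializing cover along $Z\hookrightarrow B$ does yield a cdh cover of $Z$ trivializing $f|_Z$, but for the induction you only need that $Z$ is a proper closed subscheme of $B$ (noetherian induction), not the dimension inequality, which is what you should say if $B$ is merely irreducible rather than integral of finite dimension.
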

\begin{proof}
By \cite[Proposition 7.10]{Kui23}, the ring $K_0(\Var_k)$ can be obtained using proper varieties as generators and abstract blow up squares as relations. Since every cover in the cdh topology is either an abstract blow up square or a Nisnevich cover, we find that $K_0^\Nis(\Var_k)\cong K_0^\cdh(\Var_k)$. The fact that any motivic measure is cdh local now follows from Proposition~\ref{prop:Nisnevich local}.
\end{proof}

\begin{rem}
After a suitable stratification, $X^n\to\Sym^n X$ is \'etale-locally trivial, but this only tells us that $\rank\chi^c(\Sym^n X)=0$ if $\rank \chi^c(X) = 0$. In order to attack Conjecture~\ref{conj:main}, one might hope to find a coarser topology in which $X^n\to\Sym^n X$ is locally trivial (after a suitable stratification). Since $\chi^c$ factors through $K_0^\Nis(\Var_k)$, the Nisnevich topology is the first place to look. 

Unfortunately, $X^n\to\Sym^n X$ (for $n\geq 2$) is not Nisnevich-locally trivial after any stratification. Indeed, over the open stratum of $\Sym^nX$ parametrizing reduced subschemes of length $n$, the map $X^n\to\Sym^nX$ restricts to a degree $n!$ \'etale cover with irreducible source. This yields non-trivial extensions of residue fields, which prevents the \'etale-locally trivial fiber bundle from being Nisnevich-locally trivial. This same issue arises with any completely decomposed topology, such as the cdh topology.
\end{rem}

\section{A power structure on $K_0(\Var_k)$}\label{sec:power structure K0}
A key input for Beauville's proof of the Yau--Zaslow formula \cite{Beauville:Yau-Zaslow} is G\"ottsche's formula, which implies that for a smooth complex surface $X$, the generating series of the Euler characteristics of the Hilbert schemes $\Hilb^nX$ is given by $\prod_{i\geq 1}(1-t^i)^{-\chi(X)}$. The power structure on $K_0(\Var_\mb{C})$ \cite{PowerStructure-K0(Var)} enables one to give a very natural proof of G\"ottsche's formula. This power structure allows one to make sense of (and prove \cite{GZLMH:hilbert}) the expression
\[\sum_{g\geq 0}[\Hilb^gX]\cdot t^g=(\sum_{m\geq 0}[\Hilb^m_0(\mb{A}^d)]\cdot t^m)^{[X]}\]
in $K_0(\Var_\mb{C})$, where $d$ is any smooth quasi-projective variety of dimension $d$. Furthermore, the Euler characteristic with compact supports is compatible with this exponentiation, so that
\[\sum_{g\geq 0}\chi(\Hilb^gX)\cdot t^g=(\sum_{m\geq 0}\chi(\Hilb^m_0(\mb{A}^d))\cdot t^m)^{\chi(X)}.\]

The key to proving this equality is that the Euler characteristic with compact support induces a surjective ring homomorphism $\chi:K_0(\Var_\mb{C})\to\mb{Z}$ that respects the power structure on the source and target. Our ultimate goal for the next few sections is to show that over any field $k$ with $\Char{k}\neq 2$, there is a power structure on $\GW(k)$ (assuming Conjecture~\ref{conj:main}) that is compatible with a power structure on $K_0(\Var_k)$ and the motivic Euler characteristic $\chi^c$. This will enable us to prove, conditional on Conjecture~\ref{conj:main}, an enriched G\"ottsche formula en route to the arithmetic Yau--Zaslow formula. 

To begin, we will provide some exposition around the details of \cite{PowerStructure-K0(Var)}. Rather than restricting our attention to $\mb{C}$, we will work over an arbitrary field, addressing any positive characteristic anomalies that might arise. Many of these details in arbitrary characteristic are present in \cite{GZLMH:hilbert,GZLMH13} under the added assumption that $k$ is algebraically closed, but we include and expand on them here for the reader's convenience.

\begin{defn}\label{def:power structure}
Let $R$ be a ring. Let $1+t\cdot R\dlb t\drb$ denote the (multiplicatively closed) set of formal power series of the form $1+\sum_{i=1}^\infty a_it^i$ with $a_i\in R$. A \textit{power structure} on $R$ is a function
\[(1+t\cdot R\dlb t\drb)\times R\to 1+t\cdot R\dlb t\drb,\]
denoted by $(A(t),r)\mapsto A(t)^r$, satisfying the following properties for all $A(t),B(t)\in 1+t\cdot R\dlb t\drb$ and $r,s\in R$:
\begin{enumerate}[(i)]
\item $A(t)^0=1$.
\item $A(t)^1=A(t)$.
\item $(A(t)\cdot B(t))^r=A(t)^r\cdot B(t)^r$.
\item $A(t)^{r+s}=A(t)^r\cdot A(t)^s$.
\item $A(t)^{rs}=(A(t)^r)^s$.
\item $(1+t)^r=1+rt+o(t^2)$.
\item $A(t^i)^r=A(t)^r|_{t\mapsto t^i}$ for all $i\geq 1$.
\end{enumerate}
Further, a power structure is called \emph{finitely determined} if for each $i>0$, there exists $j>0$ such that for all $A(t)\in 1+t\cdot R\dlb t\drb$ and $m\in R$, the series $A(t)^m\mod t^{i+1}$ is determined by $A(t)\mod t^{j+1}$.
\end{defn}

\begin{rem}
All of the power structures that we will encounter in this paper are finitely determined, so we will generally not mention this assumption explicitly.    
\end{rem}

\begin{ex}
When $R=\mb{Z}$, the usual exponentiation of power series determines a power structure on $\mb{Z}$. In fact, Gusein-Zade--Luengo--Melle-Hern\'andez use this power structure to guide their construction of the power structure on $K_0(\Var_\mb{C})$ \cite[p.~50]{PowerStructure-K0(Var)}. Let $r\in\mb{Z}$ and $1+\sum_{i=1}^\infty a_it^i\in 1+t\cdot\mb{Z}\dlb t\drb$. Inducting on the multinomial theorem, we find that for $r\geq 0$,
\begin{align}\label{eq:power structure on Z}
\left(1+\sum_{i=1}^\infty a_it^i\right)^r&=1+\sum_{n=1}^\infty\left(\sum_{\substack{b_1,\ldots,b_n\geq 0;\\ \sum_i ib_i=n}}\binom{r}{b_1,\ldots,b_n,r-\sum_ib_i}\prod_{i=1}^n a_i^{b_i}\right)t^n,
\end{align}
where each $\binom{r}{b_1,\ldots,b_n,r-\sum_ib_i}=\frac{r(r-1)\cdots(r-\sum_ib_i+1)}{b_1!\cdots b_n!}$ is a multinomial coefficient. Note that for the degree $n$ term, we only need to consider the multinomial theorem for $(1+a_1t+\ldots+a_nt^n)^r$, since all remaining terms will have degree strictly greater than $n$. For $-r<0$, we set $A(t)^{-r}:=\frac{1}{A(t)^r}$. One can check by induction on polynomial division that $A(t)^{-r}\in 1+t\cdot\mb{Z}\dlb t\drb$.
\end{ex}

We now want to adapt Equation~\ref{eq:power structure on Z} to $R=K_0(\Var_k)$, where $k$ is an arbitrary field. Since $a_i,r\in R$, these should be replaced with isomorphism classes of quasi-projective varieties $[A_i],[X]\in K_0(\Var_k)$. If we continue to view $b_i$ as integers, a natural replacement of $a_i^{b_i}$ is the product variety $A_i^{b_i}$. The real work comes from correctly interpreting the multinomial coefficient $\binom{X}{b_1,\ldots,b_n,X-\sum_ib_i}$. Over an algebraically closed field, this should be a variety whose Euler characteristic is $\binom{\chi(X)}{b_1,\ldots,b_n,\chi(X)-\sum_ib_i}$. Such a variety is given by $(\prod_{i=1}^n X^{b_i}-\Delta)/\prod_{i=1}^n S_{b_i}$, where the \textit{big diagonal} $\Delta$ is the union of the pairwise diagonals in $\prod_i X^{b_i}$. For an example of this computation in a concrete case, see e.g.~\cite{dC00}.

Based on these observations, one is led to propose the following power structure on isomorphism classes of quasi-projective varieties (i.e. the Grothendieck semiring of \textit{effective} classes).

\begin{defn}\label{def:power structure on effective classes}
    Let $k$ be an arbitrary field. Let $X$ and $A_1,A_2,\ldots$ be quasi-projective varieties over $k$. Let $A(t):=1+\sum_i[A_i]t^i\in 1+t\cdot K_0(\Var_k)\dlb t\drb$. Define
    \[\mu_\mr{eff}(A(t),[X]):=1+\sum_{n=1}^\infty\left(\sum_{\substack{b_1,\ldots,b_n\geq 0;\\ \sum_i ib_i=n}}\left[\frac{((\prod_{i=1}^n X^{b_i})-\Delta)\times\prod_{i=1}^n A_i^{b_i}}{\prod_{i=1}^n S_{b_i}}\right]\right)t^n,\]
    where $\Delta$ is the union of the pairwise diagonals in $\prod_i X^{b_i}$ and $S_{b_i}$ acts by permuting the $b_i$ factors in $\prod_i X^{b_i}-\Delta$ and $A_i^{b_i}$.
\end{defn}

\begin{rem}
    Over an algebraically closed field, one can describe $\Delta$ as the set of points in $\prod_i X^{b_i}$ with at least two coinciding coordinates. Over an arbitrary field, we no longer have a clean description of $\Delta$ as the boundary of a configuration space, so we instead construct $\Delta$ in terms of the various diagonal morphisms.
\end{rem}

Note that $\mu_0$ satisfies the criteria of Definition~\ref{def:power structure}, at least when the exponents involved are classes of quasi-projective varieties. In order to turn $\mu_\mr{eff}$ into a genuine power structure, it thus suffices to define $\mu_\mr{eff}$ when the exponent is a general element of $K_0(\Var_k)$.

Rather than explicitly extending the definition of $\mu_\mr{eff}$, we will define $(1-t)^{-[X]}$ for every quasi-projective variety $X$. By \cite[Proposition 1]{GZLMH:hilbert}, defining the power series $(1-t)^{-[X]}$ for all quasi-projective varieties $X$ uniquely determines a (finitely determined) power structure on $K_0(\Var_k)$. We start with a definition.

\begin{defn}\label{def:sym_detla}
Let $n\geq 1$, and let $X$ be a quasi-projective variety over a field $k$. Define
\[\Sym_\Delta^n X:=\coprod_{\substack{a_1,\ldots,a_n\geq 0\\\sum_i ia_i=n}}\left(\left(\prod_{i=1}^n X^{a_i}-\Delta\right)/\prod_{i=1}^n S_{a_i}\right),\]
where $\Delta$ is the union of the pairwise diagonals in $\prod_i X^{a_i}$.
\end{defn}

\begin{lem}\label{lem:definition of mu_0}
For each quasi-projective variety $X$, define
\[\mu_0(1-t,-[X]):=1+\sum_{n=1}^\infty[\Sym^n_\Delta X]\cdot t^n.\]
Then $\mu_0$ determines a (finitely determined) power structure on $K_0(\Var_k)$ (which we also denote by $\mu_0$), and $\mu_0(A(t),[X])=\mu_\mr{eff}(A(t),[X])$ for every quasi-projective variety $X$. 
\end{lem}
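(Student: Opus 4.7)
The strategy is to invoke~\cite[Proposition~1]{GZLMH:hilbert}, which asserts that a finitely determined power structure on $K_0(\Var_k)$ is uniquely determined by specifying the values $(1-t)^{-[X]}$ as $X$ ranges over quasi-projective varieties, subject to the compatibility condition that these values respect the scissor relations of $K_0(\Var_k)$. So the plan is to (1) verify this compatibility for the proposed formula $1+\sum_n[\Sym^n_\Delta X]\cdot t^n$, and (2) check that the power structure produced this way agrees with $\mu_{\mr{eff}}$ on effective classes.

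For step (1), the key input is the identity $[\Sym^n_\Delta X]=[\Sym^n X]$ in $K_0(\Var_k)$. I would establish this by stratifying $\Sym^n X$ by partition type of the support: for each tuple $(a_1,\ldots,a_n)$ with $\sum_i ia_i=n$, the corresponding locally closed subscheme of $\Sym^n X$ is $(\prod_i X^{a_i}-\Delta)/\prod_i S_{a_i}$. Over a non-closed field, this stratification has to be constructed at the scheme level using the diagonal morphisms rather than by inspecting closed points, which is the technical heart of the argument. Summing over partitions yields the identity, so the proposed generating series is the Kapranov zeta function $\zeta_X(t):=\sum_n[\Sym^n X]\cdot t^n$. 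For an open--closed decomposition $X=U\sqcup Z$, the standard stratification $\Sym^n X=\coprod_{a+b=n}\Sym^a U\times\Sym^b Z$ (which continues to yield the corresponding equality in $K_0(\Var_k)$ even when $U$ is merely open with closed complement $Z$) gives $\zeta_X(t)=\zeta_U(t)\cdot\zeta_Z(t)$, i.e.\ $(1-t)^{-[X]}=(1-t)^{-[U]}\cdot(1-t)^{-[Z]}$. This is exactly the scissor compatibility demanded by~\cite[Proposition~1]{GZLMH:hilbert}, so that proposition produces the required finitely determined power structure $\mu_0$.

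For step (2), I would compute $\mu_{\mr{eff}}((1-t)^{-1},[X])$ directly from Definition~\ref{def:power structure on effective classes} by taking each $A_i=\Spec{k}$; then every $\prod_i A_i^{b_i}$ is a point and the formula collapses to $1+\sum_n[\Sym^n_\Delta X]\cdot t^n=\mu_0((1-t)^{-1},[X])$. It then remains to verify that $\mu_{\mr{eff}}$ satisfies the power structure axioms (i)--(vii) of Definition~\ref{def:power structure} when restricted to effective exponents, which is a direct combinatorial consequence of the multinomial identities motivating Definition~\ref{def:power structure on effective classes} (and which is worked out explicitly in~\cite{PowerStructure-K0(Var)}). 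Given both compatibilities, the uniqueness clause in~\cite[Proposition~1]{GZLMH:hilbert} forces $\mu_0=\mu_{\mr{eff}}$ on effective classes.

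The main obstacle is step (1): one must give the scheme-theoretic decomposition of $\Sym^n X$ by partition type over an arbitrary field, which requires careful manipulation of the big diagonal via the diagonal morphisms rather than via closed points, and one must check that the resulting strata recover $(\prod_i X^{a_i}-\Delta)/\prod_i S_{a_i}$ as schemes (up to the equality in $K_0(\Var_k)$). Once that stratification is in place, multiplicativity of the Kapranov zeta function under open--closed decompositions is standard, and the comparison with $\mu_{\mr{eff}}$ reduces to an elementary combinatorial check.
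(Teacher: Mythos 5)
Your overall route is the same as the paper's: reduce everything to \cite[Proposition~1]{GZLMH:hilbert} via the series $(1-t)^{-[X]}$, observe that $\mu_{\mr{eff}}((1-t)^{-1},[X])$ collapses to $1+\sum_n[\Sym^n_\Delta X]\cdot t^n$ when all $A_i=\Spec{k}$, and conclude by uniqueness. Your step (2) is exactly the paper's argument. The extra content you add in step (1) --- an explicit verification that the assignment respects the scissor relations --- is a reasonable thing to want, but the way you carry it out has a genuine gap.

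The problem is your ``key input'' $[\Sym^n_\Delta X]=[\Sym^n X]$ \emph{in $K_0(\Var_k)$}. Over a field of positive characteristic this identity is not known, and the paper is careful never to assert it: Proposition~\ref{prop:universally homeomorphic} only proves that the partition-type stratum of $\Sym^n X$ and the corresponding piece $(\prod_i X^{a_i}-\Delta)/\prod_i S_{a_i}$ of $\Sym^n_\Delta X$ are \emph{universally homeomorphic} (the comparison map is finite, surjective, and bijective on geometric points, but need not be an isomorphism of schemes in characteristic $p$), so the equality of classes holds only in $K_0^\uh(\Var_k)$. Since it is unknown whether $K_0(\Var_k)\to K_0^\uh(\Var_k)$ is injective, you cannot transport the identity back, and your identification of the proposed series with the Kapranov zeta function --- and hence your appeal to multiplicativity of $\zeta_X$ --- is unjustified precisely in the case the lemma is meant to cover. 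The same characteristic-$p$ caveat applies to your claim that $\Sym^nX=\coprod_{a+b=n}\Sym^aU\times\Sym^bZ$ yields the corresponding identity in $K_0(\Var_k)$: for non-free actions, quotients of closed stable subschemes need not agree with scheme-theoretic images of strata. The repair is to bypass $\Sym^nX$ entirely and verify $\bigl(1+\sum_n[\Sym^n_\Delta U\sqcup Z]t^n\bigr)=\bigl(1+\sum_n[\Sym^n_\Delta U]t^n\bigr)\cdot\bigl(1+\sum_n[\Sym^n_\Delta Z]t^n\bigr)$ directly on the configuration spaces: the locally closed decomposition of $\prod_iX^{a_i}-\Delta$ according to which coordinates lie in $U$ versus $Z$ is stable under $\prod_iS_{a_i}$, and because these actions are \emph{free} the quotients are finite \'etale torsors, so they commute with the locally closed decomposition in every characteristic. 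This is the computation underlying \cite{PowerStructure-K0(Var),GZLMH:hilbert}, and it is the reason the lemma can legitimately be stated in $K_0(\Var_k)$ rather than only in $K_0^\uh(\Var_k)$.
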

\begin{proof}
    That $\mu_0$ determines a power structure on $K_0(\Var_k)$ follows from \cite[Proposition 1]{GZLMH:hilbert}. Since $\mu_0$ is a power structure, we have
    \begin{align*}
        \mu_0\left(1+\sum_{n\geq 1}t^n,[X]\right)&=\mu_0((1-t)^{-1},[X])\\
        &=\mu_0(1-t,-[X]),
    \end{align*}
    and $\mu_0(1-t,-[X])=\mu_\mr{eff}(1+\sum_{n\geq 1}t^n,[X])$ by construction. Thus $\mu_0$ and $\mu_\mr{eff}$ agree on pairs of the form $(1-t,-[X])$, so these two formulas determine the same power structure by \cite[Proposition 1]{GZLMH:hilbert}.
\end{proof}

\subsection{A power structure on $K_0^\uh(\Var_k)$}
Since $\chi^c$ is insensitive to universal homeomorphisms, it would suffice for our purposes to work with a power structure on $K_0^\uh(\Var_k)$. Analogous to Lemma~\ref{lem:definition of mu_0}, we can define a power structure $\mu_0^\uh$ on $K_0^\uh(\Var_k)$ by specifying
\[\mu_0^\uh(1-t,-[X]):=1+\sum_{n=1}^\infty[\Sym^n_\Delta X]\cdot t^n\]
for each quasi-projective variety $X$. If we replace $\Sym^n_\Delta X$ by a scheme $Y_n$ such that $[\Sym^n_\Delta X]=[Y_n]$ in $K_0^\uh(\Var_k)$, then the formula
\[\mu(1-t,-[X]):=1+\sum_{n=1}^\infty[Y_n]\cdot t^n\]
determines the same power structure on $K_0^\uh(\Var_k)$ as $\mu_0^\uh$. We list a few universally homeomorphic replacements of $[\Sym^n_\Delta X]$ below.

\begin{prop}\label{prop:universally homeomorphic}
    Let $X$ be a quasi-projective variety over a field $k$. For each $n\geq 1$, the classes in $K_0^\uh(\Var_k)$ of the following $k$-schemes are equal:
    \begin{enumerate}[(i)]
    \item $\Sym_\Delta^n X$.
    \item $\Sym^n X$.
    \item The scheme of degree $n$ divided powers of $X$.
    \end{enumerate}
\end{prop}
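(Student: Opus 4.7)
The plan is to prove (i)$=$(ii) via a locally closed stratification of $\Sym^n X$ by partition type, which in fact gives equality already in $K_0(\Var_k)$, and then to deduce (ii)$=$(iii) from the classical fact that the symmetric product and the scheme of divided powers are universally homeomorphic.

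For (i)$=$(ii), I would stratify $\Sym^n X$ by the partition type of the effective $0$-cycles it parametrizes. For each tuple $(a_1,\dots,a_n)$ with $a_i\geq 0$ and $\sum_i i\cdot a_i = n$, the stratum $\Sym^{(a_\bullet)}X\subseteq \Sym^n X$ consists of $0$-cycles supported on $\sum_i a_i$ distinct points, with exactly $a_i$ of them having multiplicity $i$. Choosing ordered representatives (where the $a_i$ slots of multiplicity $i$ are treated interchangeably) identifies this stratum with $\left(\prod_{i=1}^n X^{a_i}-\Delta\right)/\prod_{i=1}^n S_{a_i}$, which is exactly the summand appearing in Definition~\ref{def:sym_delta}. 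Since these strata form a locally closed decomposition of $\Sym^n X$ as the partition $(a_\bullet)$ varies, summing over partitions gives
\[[\Sym^n X] \;=\; \sum_{\sum_i i a_i = n}\left[\left(\prod_{i=1}^n X^{a_i}-\Delta\right)\bigg/\prod_{i=1}^n S_{a_i}\right] \;=\; [\Sym_\Delta^n X]\]
already in $K_0(\Var_k)$, and a fortiori in $K_0^\uh(\Var_k)$.

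For (ii)$=$(iii), I would appeal to the standard fact (proved in full generality by Rydh, building on work of Roby and Ferrand) that the canonical morphism $\Gamma^n X \to \Sym^n X$ from the scheme of degree-$n$ divided powers to the symmetric product is integral, universally injective, and surjective, hence a universal homeomorphism. When $n!$ is invertible on $X$ (for example in characteristic $0$, or when $\Char k > n$) this morphism is already an isomorphism, so the only content lies in small positive characteristic $p\leq n$. Since the map is a universal homeomorphism between quasi-projective $k$-schemes, its source and target have the same class in $K_0^\uh(\Var_k)$ by the very definition of the ideal $I^\uh_k$.

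The only nontrivial input is the universal homeomorphism property of $\Gamma^n X \to \Sym^n X$ in positive characteristic; the partition stratification is a formal construction, and the passage to $K_0^\uh(\Var_k)$ is then automatic.
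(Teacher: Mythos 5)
Your overall route is the same as the paper's: stratify $\Sym^n X$ by partition type and compare stratum by stratum, then quote Rydh for the divided-power scheme. The (ii)$=$(iii) step is fine. But there is a genuine gap in your (i)$=$(ii) step: the claim that choosing ordered representatives ``identifies'' the stratum $\Sym^{(a_\bullet)}X$ with $\bigl(\prod_{i}X^{a_i}-\Delta\bigr)/\prod_i S_{a_i}$, and hence that the equality holds already in $K_0(\Var_k)$, is not justified and is in fact the whole content of the statement in positive characteristic. What one actually has is a canonical finite surjective morphism $\bigl(\prod_i X^{a_i}-\Delta\bigr)/\prod_i S_{a_i}\to\Sym^{(a_\bullet)}X$ that is bijective on geometric points; this is a universal homeomorphism but need not be an isomorphism. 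Concretely, take $X=\mb{A}^1$, $\Char k=p$, $n=p$, and the partition $\lambda=(p)$: the stratum map is $\mb{A}^1\to\Sym^p\mb{A}^1\cong\mb{A}^p$, $x\mapsto(e_1,\dots,e_p)=\bigl(\tbinom{p}{1}x,\dots,x^p\bigr)=(0,\dots,0,x^p)$, i.e.\ a Frobenius onto its image --- a universal homeomorphism that is not an isomorphism. So your assertion that $[\Sym^nX]=[\Sym^n_\Delta X]$ in $K_0(\Var_k)$ is wrong (or at least unproven) outside characteristic $0$; the correct conclusion, and the one the proposition asks for, is equality in $K_0^\uh(\Var_k)$, and reaching it requires you to actually verify that the comparison map on each stratum is integral (or finite), surjective, and universally injective. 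This is exactly the step the paper spells out, and it is not ``a formal construction'' as you describe it: the passage to $K_0^\uh$ is needed for (i)$=$(ii) just as much as for (ii)$=$(iii).

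To repair the argument, replace ``identifies this stratum with'' by: the map $(x_1,\dots,x_m)\mapsto\sum_i\lambda_i[x_i]$ from $X^m-\Delta$ to the orbit-type stratum $\Sym^\lambda X$ descends to the free quotient by $S_\lambda$, and the induced morphism $(X^m-\Delta)/S_\lambda\to\Sym^\lambda X$ is finite, surjective, and injective on geometric points, hence a universal homeomorphism; then sum over partitions in $K_0^\uh(\Var_k)$. With that correction your proof matches the paper's.
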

\begin{proof}
    The symmetric power and divided power schemes are universally homeomorphic by \cite[Paper III]{Ryd08}, so it suffices to consider $\Sym^n_\Delta X$ and $\Sym^n X$. This is explained in the proof of \cite[Chapter 7, Proposition 1.1.11]{CLNS:MotivicIntegration}, but we will include the relevant details here. 
    
    The stratification of $\Sym^n X$ into orbit types (see e.g.~\cite{Got01,dCM02}) gives us an equality
    \[\Sym^n X=\coprod_{\lambda\in\mc{P}(n)}\Sym^\lambda X,\]
    where $\mc{P}(n)$ is the set of partitions of $n$. Given $\lambda=(\lambda_1,\ldots,\lambda_m)$ (so that $n=\sum_{i=1}^m\lambda_i$), the closed points of the stratum $\Sym^\lambda X$ are 0-cycles $\sum_i\lambda_i[x_i]$, with $x_1,\ldots,x_m$ distinct closed points of $X$. Let $\Delta\subset X^m$ denote the diagonal consisting of $m$-tuples of points in which at least two coincide. Let $S_\lambda\leq S_m$ denote the subgroup consisting of permutations $\sigma$ such that $\lambda_{\sigma(i)}=\lambda_i$ for all $1\leq i\leq m$. This group acts freely on $X^m-\Delta$, and the morphism
    \begin{align*}
        X^m-\Delta&\to\Sym^\lambda X\\
        (x_1,\ldots,x_m)&\mapsto\sum_{i=1}^m\lambda_i[x_i]
    \end{align*}
    factors through the quotient $X^m-\Delta\to(X^m-\Delta)/S_\lambda$. The induced map $(X^m-\Delta)/S_\lambda\to\Sym^\lambda X$ is finite, surjective, and a bijection on geometric points, which implies that $(X^m-\Delta)/S_\lambda\to\Sym^\lambda X$ is a universal homeomorphism.
    
    For our partition $\lambda$, let $a_i$ denote the number of terms $\lambda_j$ such that $\lambda_j=i$ (so that $n=\sum_{i=1}^n ia_i$ and $m=\sum_{i=1}^n a_i$). Let $\prod_{i=1}^n X^{a_i}-\Delta$, where $\Delta$ is again the diagonal consisting of $m$-tuples of points in which at least two coincide. The action of $\prod_i S_{a_i}$ on $\prod_i X^{a_i}-\Delta$ assigns the same weight $\lambda_i$ to the points in the $a_i$-tuple coming from $X^{a_i}-\Delta$; this is identical to the action of $S_\lambda$ on $X^m-\Delta$. In particular, we have an isomorphism
    \[(X^m-\Delta)/S_\lambda\cong\left(\prod_{i=1}^nX^{a_i}-\Delta\right)/\prod_{i=1}^n S_{a_i},\]
    the latter terms being the strata of $\Sym^n_\Delta X$. Thus
    \begin{align*}
        [\Sym^n X]&=\sum_{\lambda\in\mc{P}(n)}[\Sym^n_\lambda X]\\
        &=\sum_{\lambda\in\mc{P}(n)}[(X^m-\Delta)/S_\lambda]\\
        &=\sum_{\lambda\in\mc{P}(n)}\left[\left(\prod_{i=1}^n X^{a_i}-\Delta\right)/\prod_{i=1}^n S_{a_i}\right]\\
        &=[\Sym^n_\Delta X]
    \end{align*}
    in $K_0^\uh(\Var_k)$, as desired.
\end{proof}

\section{A conjectural power structure on $\GW(k)$}\label{sec:power structure gw}
Recall over a field $k$ of characteristic not 2, the motivic Euler characteristic is a ring homomorphism $\chi^c:K_0(\Var_k)\to\GW(k)$ (see Remark~\ref{rem:chi^c valued in gw}). In the complex setting, G\"ottsche's formula arises from the compatibility of the Euler characteristic (with compact support) $\chi:K_0(\Var_\mb{C})\to\mb{Z}$ with the standard power structure on $\mb{Z}$ and Gusein-Zad--Luengo--Melli-Hern\'andez's power structure on $K_0(\Var_\mb{C})$. 

We have given an analogous Euler characteristic $\chi^c$ and recounted a power structure on $K_0(\Var_k)$. Over any field $k$ with $\Char{k}\neq 2$ (and conditional on Conjecture~\ref{conj:main}), we will use these constructions to induce a power structure on $\GW(k)$ such that
\[\chi^c(A(t)^{M})=\chi^c(A(t))^{\chi^c(M)}\]
for all $M\in K_0(\Var_k)$ and $A(t)\in 1+t\cdot K_0(\Var_k)\dlb t\drb$. The reason that we assume $\Char{k}\neq 2$ is so that the motivic Euler characteristic is a ring homomorphism $\chi^c:K_0(\Var_k)\to\GW(k)$ (see Corollary~\ref{cor:gw inside gw[1/p]}). In Proposition~\ref{prop:chi^c surjective}, we will prove that $\chi^c$ surjects onto $\GW(k)$. The next subsection will explain the role of surjective ring homomorphisms with respect to power structures.

\subsection{Pre-power structures}
Recall that a power structure on a ring $R$ is a function $(1+t\cdot R\dlb t\drb)\times R\to 1+t\cdot R\dlb t\drb$ that satisfies the formal properties usually associated with exponentiation (see Definition~\ref{def:power structure}). Rather than defining the values of such a function for each $f(t)\in 1+t\cdot R\dlb t\drb$, it will be convenient to work with a simpler subset of $1+t\cdot R\dlb t\drb$. This leads us to the notion of a \textit{pre-power structure}.

\begin{defn}\label{def:pre-power structure}
Let $R$ be a ring. Let $S_R:=\{1-t^i:i\geq 0\}\subset 1+t\cdot R\dlb t\drb$. A \textit{pre-power structure} on $R$ is a function
\[S_R\times R\to 1+t\cdot R\dlb t\drb,\]
denoted by $((1-t^i),r)\mapsto (1-t^i)^r$, satisfying the following properties for all $r,s\in R$:
\begin{enumerate}[(i)]
\item $(1-t)^{-1}=\sum_{i=0}^\infty t^i$.
\item $(1-t)^{-a}=1+at+o(t^2)$.
\item $(1-t)^{-(a+b)}=(1-t)^{-a}\cdot(1-t)^{-b}$.
\item If $(1-t)^a=f(t)$, then $(1-t^i)^a=f(t^i)$ for all $i\geq 1$.
\end{enumerate}
\end{defn}

Note that any power structure restricts to a pre-power structure.

\begin{prop}
Let $R$ be a ring with a power structure $\mu:(1+t\cdot R\dlb t\drb)\times R\to 1+t\cdot R\dlb t\drb$. Then $\mu|_{S_R\times R}$ is a pre-power structure on $R$.
\end{prop}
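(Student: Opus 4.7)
The plan is to verify each of the four axioms of Definition~\ref{def:pre-power structure} directly from the seven axioms of Definition~\ref{def:power structure}, applied throughout to the series $A(t) = 1-t$. Three of the four verifications are essentially syntactic specializations. Pre-power axiom (iii) is power structure axiom (iv) with $r = -a$, $s = -b$. Pre-power axiom (iv) is power structure axiom (vii) with $r = a$, using that $A(t^i) = 1-t^i$ when $A(t) = 1-t$. For pre-power axiom (i), power structure axioms (i) and (iv) (taking $s = -r$) give $A(t)^{r} \cdot A(t)^{-r} = A(t)^0 = 1$, so $(1-t)^{-1}$ is the multiplicative inverse of $(1-t)^1 = 1-t$ in $1 + t\cdot R\dlb t\drb$, and this inverse is manifestly $\sum_{i\geq 0} t^i$.

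The substantive step is pre-power axiom (ii), namely $(1-t)^{-a} = 1 + at + o(t^2)$. Power structure axiom (vi) governs $(1+t)^r$, not $(1-t)^{-r}$, so one needs a bridge. The key is the factorization
\[
(1-t)^{-1} = (1+t)\cdot(1-t^2)^{-1}
\]
in $1 + t\cdot R\dlb t\drb$, which follows from $(1-t)(1+t) = 1-t^2$. Applying power structure axioms (v) and (iii) then gives
\[
(1-t)^{-a} = ((1-t)^{-1})^a = (1+t)^a \cdot (1-t^2)^{-a}.
\]
Power structure axiom (vi) yields $(1+t)^a = 1 + at + o(t^2)$. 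Meanwhile, power structure axiom (vii) applied to $A(t) = 1-t$ (with exponent $-a$) gives
\[
(1-t^2)^{-a} = ((1-t)^{-a})\bigr|_{t\mapsto t^2} \in 1 + t^2 \cdot R\dlb t\drb,
\]
so this factor contributes only to terms of order $\geq t^2$. Multiplying the two factors shows $(1-t)^{-a} = 1 + at + o(t^2)$, completing the argument.

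The only real obstacle is bridging from the power structure axiom governing $(1+t)^r$ to pre-power axiom (ii); the factorization above handles this cleanly. The remaining axioms of the pre-power structure reduce immediately to special cases of the power structure axioms with $A(t) = 1-t$.
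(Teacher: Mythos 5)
Your proof is correct and follows the same overall strategy as the paper: three of the four pre-power axioms are read off directly from the power structure axioms specialized to $A(t)=1-t$, and axiom (i) is obtained by inverting $1-t$ in $1+t\cdot R\dlb t\drb$ via $(1-t)^1(1-t)^{-1}=(1-t)^0=1$. The one place where you genuinely add something is axiom (ii). The paper asserts that property (ii) of Definition~\ref{def:pre-power structure} is a ``special case'' of property (vi) of Definition~\ref{def:power structure}, but as you observe these concern different series ($(1-t)^{-a}$ versus $(1+t)^r$), and no axiom permits the substitution $t\mapsto -t$. Your factorization $(1-t)^{-a}=(1+t)^a\cdot(1-t^2)^{-a}$, obtained from $(1-t)(1+t)=1-t^2$ together with axioms (iii) and (v), combined with the observation via axiom (vii) that $(1-t^2)^{-a}\in 1+t^2\cdot R\dlb t\drb$, is a clean and valid bridge; it supplies the justification the paper's proof elides. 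So your argument is not merely equivalent to the paper's but slightly more complete at that step.
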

\begin{proof}
Properties (ii), (iii), and (iv) of Definition~\ref{def:pre-power structure} are special cases of properties (vi), (iv), and (vii), respectively, of Definition~\ref{def:power structure}. To deduce property (i) of Definition~\ref{def:pre-power structure}, note that
\begin{align*}
    1 &= (1-t)^0\\
    &= (1-t)^1(1-t)^{-1}.
\end{align*}
We can thus derive the formula $(1-t)^{-1}=\sum_{i=0}^\infty t^i$ by inductively computing $1=(1-t)(\sum_{i=0}^\infty t^i)$.
\end{proof}

The benefits of working with a pre-power structure are twofold: a pre-power structure uniquely extends to a power structure \cite[Proposition 1]{GZLMH:hilbert}, and a ring homomorphism $\vphi:R_1\to R_2$ respects given power structures $\mu_i$ on $R_i$ if and only if $\vphi$ respects the pre-power structures $\mu_i|_{S_{R_i}\times R_i}$ \cite[Proposition 2]{GZLMH:hilbert}. In the following lemma, we will utilize these results to show when a surjective ring homomorphism sends a power structure on the source to a compatible power structure on the target.

Before stating the lemma, recall that a ring homomorphism $\vphi:R_1\to R_2$ induces a ring homomorphism $R_1\dlb t\drb\to R_2\dlb t\drb$. The ring structure on $R_i\dlb t\drb$ is given by formal addition and multiplication of power series, with the ring structure on $R_i$ governing the addition and multiplication of coefficients.

\begin{notn}\label{notn:ker chi mult closed}
Let $\vphi:R_1\to R_2$ be a ring homomorphism. Let $1+t\cdot\ker\vphi\dlb t\drb$ denote the subset of $1+t\cdot R_1\dlb t\drb$ consisting of power series whose positive degree coefficients are all elements of $\ker\vphi\subseteq R_1$. Note that $1+t\cdot\ker\vphi\dlb t\drb$ is a multiplicatively closed set, as $\vphi$ is a ring homomorphism and all positive degree coefficients in a product of elements of $1+t\cdot\ker\vphi\dlb t\drb$ consist of sums of products of elements of $\ker\vphi$.
\end{notn}

\begin{lem}\label{lem:induced power structure}
Let $\vphi:R_1\to R_2$ be a surjective ring homomorphism. Let $\mu_1$ be a power structure on $R_1$. Assume that $\mu_1(1-t,r)\in 1+t\cdot\ker\vphi\dlb t\drb$ for all $r\in\ker\vphi$. Then there exists a unique power structure $\mu_2$ on $R_2$ such that $\vphi(\mu_1(A(t),r))=\mu_2(\vphi(A(t)),\vphi(r))$ for all $A(t)\in 1+t\cdot R_1\dlb t\drb$ and $r\in R_1$.
\end{lem}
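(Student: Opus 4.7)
The plan is to invoke the pre-power structure machinery of \cite[Propositions 1 and 2]{GZLMH:hilbert}: every pre-power structure on a ring extends uniquely to a power structure, and a ring homomorphism respects two power structures if and only if it respects the underlying pre-power structures. It thus suffices to construct, uniquely, a pre-power structure $\mu_2$ on $R_2$ satisfying $\mu_2(1-t^i,\vphi(r)) = \vphi(\mu_1(1-t^i,r))$ for all $r \in R_1$ and $i \geq 1$, where $\vphi$ acts coefficientwise on power series.

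The construction is forced. For $s \in R_2$, use surjectivity to pick any lift $r \in R_1$ with $\vphi(r)=s$, and set $\mu_2(1-t^i,s) := \vphi(\mu_1(1-t^i,r))$. The key step is well-definedness. Given two lifts $r,r'$ of $s$, the difference $r-r'$ lies in $\ker\vphi$, and property (iv) of Definition~\ref{def:power structure} gives
\[
\mu_1(1-t^i,r) = \mu_1(1-t^i,r') \cdot \mu_1(1-t^i,r-r').
\]
By hypothesis $\mu_1(1-t,r-r') \in 1 + t\cdot\ker\vphi\dlb t\drb$, and property (vii) of Definition~\ref{def:power structure} yields $\mu_1(1-t^i,r-r') = \mu_1(1-t,r-r')|_{t\mapsto t^i}$, so this factor also has all positive-degree coefficients in $\ker\vphi$. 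Applying $\vphi$ sends it to $1$, hence $\vphi(\mu_1(1-t^i,r)) = \vphi(\mu_1(1-t^i,r'))$, proving that $\mu_2(1-t^i,s)$ is independent of the chosen lift.

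The four axioms of Definition~\ref{def:pre-power structure} for $\mu_2$ then reduce immediately to the corresponding properties of $\mu_1$: pick lifts of the scalars appearing on each side, apply the axiom for $\mu_1$, and push through $\vphi$ (which is a ring map and so acts multiplicatively on $R_2\dlb t\drb$). Uniqueness of $\mu_2$ follows because compatibility with $\vphi$ forces the definition of $\mu_2$ on $S_{R_2}\times R_2$ once we know $\vphi$ is surjective, and \cite[Proposition 1]{GZLMH:hilbert} permits at most one extension of this pre-power structure to a power structure on $R_2$.

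The main obstacle is precisely the well-definedness step, where the hypothesis $\mu_1(1-t,\ker\vphi) \subseteq 1 + t\cdot\ker\vphi\dlb t\drb$ is doing all the work; the remaining verifications and the uniqueness statement are formal consequences of the pre-power structure formalism together with the surjectivity of $\vphi$.
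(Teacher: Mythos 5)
Your proposal is correct and follows essentially the same route as the paper: define $\mu_2(1-t^i,s)$ via an arbitrary lift, use the hypothesis on $\ker\vphi$ together with properties (iv) and (vii) of the power structure to establish well-definedness, verify the pre-power structure axioms, and conclude by the extension and compatibility results of Gusein-Zade--Luengo--Melle-Hern\'andez. The only difference is that the paper writes out the verification of the four pre-power structure axioms explicitly, whereas you assert they follow formally, which is accurate.
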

\begin{proof}
Since $\vphi$ is surjective, $\vphi^{-1}(s)$ is non-empty for each $s\in R_2$. We begin by defining $\mu_2(1-t^i,s):=\vphi(\mu_1(1-t^i,\vphi^{-1}(s)))$ for each $i\geq 1$ and $s\in R_2$. To see that this is well-defined, note that given $a,b\in\vphi^{-1}(s)$, there exists $r\in\ker\vphi$ such that $a=b+r$. Thus $\mu_1(1-t^i,a)=\mu_1(1-t^i,b)\cdot\mu_1(1-t^i,r)$, so
\begin{align*}
    \vphi(\mu_1(1-t^i,a))&=\vphi(\mu_1(1-t^i,b)\cdot\mu_1(1-t^i,r))\\
    &=\vphi(\mu_1(1-t^i,b))\cdot\vphi(\mu_1(1-t^i,r)).
\end{align*}
Since $\mu_1$ is a power structure, we have $\mu_1(1-t^i,r)=\mu_1(1-t,r)|_{t\mapsto t^i}$. That is, the coefficients in positive degree of $\mu_1(1-t^i,r)$ belong to the same set as the coefficients in positive degree of $\mu_1(1-t,r)$. In particular, $\mu_1(1-t,r)\in 1+t\cdot\ker\vphi\dlb t\drb$ implies that $\mu_1(1-t^i,r)\in 1+t\cdot\ker\vphi\dlb t\drb$ for all $i\geq 1$. Thus $\vphi(\mu_1(1-t^i,r))=1$ for all $r\in\ker\vphi$, so $\mu_2(1-t^i,\vphi^{-1}(s))$ is well-defined.

This is enough to yield a pre-power structure on $R_2$. Indeed:
\begin{enumerate}[(i)]
\item We have $\vphi(1_{R_1})=1_{R_2}$ and $\vphi(-1_{R_1})=-1_{R_2}$, as is the case for any ring homomorphism. Since $\mu_1(1-t,-1)=\sum_{i=0}^\infty t^i$, we have
\begin{align*}
    \mu_2(1-t,-1)&=\vphi(\mu_1(1-t,-1))\\
    &=\vphi(\textstyle\sum_{i=0}^\infty t^i)\\
    &=\textstyle\sum_{i=0}^\infty t^i.
\end{align*}
\item Since $\mu_1(1-t,-a)=1+at+o(t^2)$, we have
\begin{align*}
    \mu_2(1-t,-s)&=\vphi(\mu_1(1-t,-\vphi^{-1}(s)))\\
    &=\vphi(1+\vphi^{-1}(s)t+o(t^2))\\
    &=1+st+o(t^2).
\end{align*}
\item Since $\mu_1(1-t,-(a+b))=\mu_1(1-t,-a)\cdot\mu_1(1-t,-b)$, we have
\begin{align*}
    \mu_2(1-t,-(c+d))&=\vphi(\mu_1(1-t,-(\vphi^{-1}(c)+\vphi^{-1}(d))))\\
    &=\vphi(\mu_1(1-t,-\vphi^{-1}(c)))\cdot\vphi(\mu_1(1-t,-\vphi^{-1}(d)))\\
    &=\mu_2(1-t,-c)\cdot\mu_2(1-t,-d).
\end{align*}
\item Let $f(t):=\mu_2(1-t,s)$. Then $f(t)=\vphi(g(t))$, where $g(t)=\mu_1(1-t,\vphi^{-1}(s))$. Since $\mu_1$ is a power structure on $R_1$, we have $g(t^i)=\mu_1(1-t^i,\vphi^{-1}(s))$. Because $\vphi$ acts on power series by acting on their coefficients, we have $\vphi(g(t^i))=f(t^i)$. Thus $f(t^i)=\mu_2(1-t^i,s)$, as desired.
\end{enumerate}
By \cite[Proposition 1]{GZLMH:hilbert}, $\mu_2$ extends to a power structure on $R_2$. By construction, $\vphi$ respects the pre-power structures underlying $\mu_1$ and $\mu_2$, so \cite[Proposition 2]{GZLMH:hilbert} implies that $\vphi(\mu_1(A(t),r))=\mu_2(\vphi(A(t)),\vphi(r))$ for all $A(t)\in 1+t\cdot R_1\dlb t\drb$ and $r\in R_1$.
\end{proof}

\subsection{Symmetric powers of $\ker\chi^c$}
Let $k$ be a field with odd exponential characteristic (i.e.~$\Char{k}\neq 2$). Our goal is to induce a power structure on $\GW(k)$ from the power structure on $K_0(\Var_k)$ by way of Lemma~\ref{lem:induced power structure}. In order to do this, we need to show that $\chi^c:K_0(\Var_k)\to\GW(k)$ is surjective. Remarkably, we do not need any virtual classes in $K_0(\Var_k)$ to prove the surjectivity of $\chi^c$.

\begin{prop}\label{prop:chi^c surjective}
Let $k$ be a field with $\Char{k}\neq 2$. Then for each $q\in\GW(k)$, there exists a variety $X_q\in\Var_k$ such that $\chi^c(X_q)=q$. In particular,
\[\chi^c:K_0(\Var_k)\to\GW(k)\]
is surjective.
\end{prop}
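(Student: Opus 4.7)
Plan. The proof proceeds in two steps. First, a negation trick: by Corollary~\ref{cor:punctured affine}(iv) we have $\chi^c(\mb{A}^{2}_k-\{0,1\})=-\langle 1\rangle$, so for any variety $Y$, multiplicativity (Proposition~\ref{prop:chi mult}) gives $\chi^c((\mb{A}^2_k-\{0,1\})\times_k Y)=-\chi^c(Y)$. Combined with additivity under disjoint union (Proposition~\ref{prop:chi additive}), any $\mb{Z}$-linear combination $\sum_i n_i\chi^c(X_i)$ is realized as $\chi^c(Z)$ for a concrete disjoint union $Z$ of products of the $X_i$ with copies of $\mb{A}^2_k-\{0,1\}$. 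So it suffices to show that the additive subgroup of $\GW(k)$ generated by $\mr{im}(\chi^c)$ is all of $\GW(k)$.

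Second, additive generation. Since $\Char k\neq 2$, every non-degenerate symmetric bilinear form diagonalizes, so $\GW(k)$ is generated as an abelian group by the rank-one classes $\langle a\rangle$ for $a\in k^\times$. We have $\langle 1\rangle = \chi^c(\Spec k)$ (Proposition~\ref{prop:chi of P^n}), and for each non-square $a$, the trace form
\[\chi^c(\Spec k(\sqrt{a}))=\Tr_{k(\sqrt{a})/k}\langle 1\rangle=\langle 2,2a\rangle=\langle 2\rangle+\langle 2a\rangle\]
by Example~\ref{ex:motivic = cat}(i). Multiplicativity then gives
\[\chi^c(\Spec k(\sqrt{a})\times_k\Spec k(\sqrt{b}))=\langle 2,2a\rangle\cdot\langle 2,2b\rangle=\langle 1,a,b,ab\rangle\]
for non-squares $a,b$; subtracting $\langle 1\rangle$ yields $\langle a\rangle+\langle b\rangle+\langle ab\rangle$ in the image subgroup. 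To isolate each individual $\langle a\rangle$, I would take sums and differences of these rank-three classes as $b$ varies over $k^\times/(k^\times)^2$, and combine with the chain equivalence $\langle x,y\rangle\cong\langle x+y,xy(x+y)\rangle$ (valid in $\GW(k)$ whenever $x+y\neq 0$) to rewrite the intermediate expressions.

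The main obstacle is this last combinatorial step. The natural building blocks (trace forms of quadratic extensions, their pairwise products, and $\langle\pm 1\rangle$) produce even-rank classes and rank-three classes $\langle a,b,ab\rangle$; showing that these additively generate every single $\langle a\rangle$ is the only non-formal content. I expect to split into cases according to whether $2$ is a square in $k$, making explicit choices of auxiliary non-squares relating to $a$ in $k^\times/(k^\times)^2$ (e.g.\ by choosing $b$ so that $ab$ lies in a prescribed square class). Once each $\langle a\rangle$ is exhibited in the image subgroup, Step 1 repackages the construction into a single variety $X_q$, giving the stronger conclusion.
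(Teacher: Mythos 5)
Your Step 1 (the negation trick via $\chi^c(\mb{A}^2_k-\{0,1\})=-\langle 1\rangle$ and additivity over disjoint unions) and your reduction to additive generation by the rank-one classes $\langle a\rangle$ match the paper exactly, as does your use of the trace form $\chi^c(\Spec k(\sqrt{a}))=\langle 2\rangle+\langle 2a\rangle$. The problem is that your proof stops precisely where the real work begins. When $2$ is a square the trace form is $\langle 1\rangle+\langle a\rangle$ and you are done (this is the paper's easy case), but when $2$ is not a square your building blocks are $\langle \pm 1\rangle$, the even-rank classes $\langle 2,2a\rangle$, and the products $\langle 2,2a\rangle\cdot\langle 2,2b\rangle=\langle 1,a,b,ab\rangle$ --- the latter are $2$-fold Pfister forms, i.e.\ elements of $I^2$ up to a unit of rank, and it is not at all clear that sums and differences of these, even aided by chain equivalence, isolate a single $\langle a\rangle$. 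Setting $b=a$ only yields $2\langle a\rangle$, and the chain relation $\langle x,y\rangle\cong\langle x+y,xy(x+y)\rangle$ depends on which square classes $x+y$ lands in, which you cannot control uniformly over all fields of characteristic $\neq 2$. You acknowledge this is ``the only non-formal content'' and that you ``expect'' to handle it by case analysis, but no argument is given; as written this is a genuine gap, not a routine verification.

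The idea you are missing is how to get the rank-one class $\langle 2\rangle$ (equivalently $\langle -2\rangle$) into the image by itself. The paper does this by removing the degree-$2$ closed point $x_2=\Spec k(\sqrt 2)$ from $\mb{P}^1_k$: since $\mb{H}=\langle 2\rangle+\langle -2\rangle$, one gets $\chi^c(\mb{P}^1_k-\{x_2\})=\mb{H}-\langle 2\rangle-\langle 4\rangle=\langle -2\rangle-\langle 1\rangle$, and adjoining a rational point gives a variety with class $\langle -2\rangle$; multiplying by $\langle -1\rangle$ (from $\mb{G}_m\amalg\Spec k$) gives $\langle 2\rangle$. Once $\langle 2\rangle$ is available, $\langle 2\rangle\cdot(\langle 2\rangle+\langle 2a\rangle)=\langle 1\rangle+\langle a\rangle$ and a single subtraction of $\langle 1\rangle$ produces $\langle a\rangle$ --- no combinatorics over $k^\times/(k^\times)^2$ and no chain-equivalence manipulations are needed. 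I would recommend replacing your product-of-two-trace-forms step with this hyperbolic-splitting trick; with it, your Step 1 packaging then correctly produces an honest variety $X_q$ for each $q$.
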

\begin{proof}
Since $\GW(k)$ is additively generated by isomorphism classes of rank $\pm 1$ forms \cite[II Theorem 4.1]{Lam:quadratic-forms}, there exist  $a_1,\ldots,a_n,b_1,\ldots,b_m\in k^\times$ such that $q=\sum_{i=1}^n\langle a_i\rangle-\sum_{j=1}^m\langle b_j\rangle$. Since $\chi^c$ is additive on disjoint unions, it suffices to find, for each $a\in k^\times$, varieties $X_{\langle a\rangle}$ and $X_{-\langle a\rangle}$ such that $\chi^c(X_{\langle a\rangle})=\langle a\rangle$ and $\chi^c(X_{-\langle a\rangle})=-\langle a\rangle$. 

Let $a\in k^\times/(k^\times)^2$. Since $\Char{k}\neq 2$, the degree 2 extension $k(\sqrt{a})/k$ is separable. We can thus apply Example~\ref{ex:motivic = cat} (i) to compute $\chi^c(\Spec{k(\sqrt{a})})=\Tr_{k(\sqrt{a})/k}\langle 1\rangle=\langle 2\rangle+\langle 2a\rangle$. We now proceed in two cases.
\begin{enumerate}[(i)]
\item First, assume that $2\in k$ is not a square. Let $x_a:\Spec{k(\sqrt{a})}\to\mb{P}^1_k$ denote $\Spec{k(\sqrt{a})}$ as a degree $2$ closed point in $\mb{P}^1_k$. Then
\begin{align*}
    \chi^c(\mb{P}^1_k-\{x_2\})&=\mb{H}-\langle 2\rangle-\langle 4\rangle\\
    &=\langle 2\rangle+\langle -2\rangle-\langle 2\rangle-\langle 1\rangle\\
    &=\langle-2\rangle-\langle 1\rangle.
\end{align*}
Let $X_{\langle -2\rangle}=(\mb{P}^1_k-\{x_2\})\amalg\Spec{k}$, which satisfies $\chi^c(X_{\langle -2\rangle})=\langle-2\rangle$ by Proposition~\ref{prop:chi additive} and the above calculation. Let $X_{\langle -1\rangle}=\mb{G}_m\amalg\Spec{k}$, which satisfies $\chi^c(X_{\langle-1\rangle})=\langle-1\rangle$ by Proposition~\ref{prop:chi additive} and Corollary~\ref{cor:punctured affine}. Then $\chi^c(X_{\langle -2\rangle}\times X_{\langle -1\rangle})=\langle 2\rangle$ by Proposition~\ref{prop:chi mult}. We then have
\begin{align*}
\chi^c(X_{\langle -2\rangle}\times(\mb{P}^1_k-\{x_a\}))&=\langle -2\rangle(\langle-2\rangle-\langle 2a\rangle)\\
&=\langle 1\rangle-\langle -a\rangle.
\end{align*}
Since $0\in\mb{P}^1_k-\{x_a\}$ and $0\in(\mb{P}^1_k-\{x_2\})\amalg\Spec{k}$ are both $k$-rational, we can define $X_{-\langle -a\rangle}:=X_{\langle -2\rangle}\times(\mb{P}^1_k-\{x_a\})-\{(0,0)\}$, which satisfies $\chi^c(X_{-\langle-a\rangle})=-\langle-a\rangle$. We then have $\chi^c(X_{\langle-1\rangle}\times X_{-\langle-a\rangle})=-\langle a\rangle$ by Proposition~\ref{prop:chi mult}. We also have
\begin{align*}
\chi^c(X_{-\langle-a\rangle}\amalg\mb{P}^1_k)&=-\langle-a\rangle+\langle a\rangle+\langle-a\rangle\\
&=\langle a\rangle,
\end{align*}
as desired.
\item Second, assume that $2\in k$ is a square. Then $\chi^c(\Spec{k(\sqrt{a})})=\langle 1\rangle+\langle a\rangle$, so $\chi^c(\Spec{k(\sqrt{a})}\amalg(\mb{A}^2_k-\{0,1\}))=\langle a\rangle$ by Proposition~\ref{prop:chi additive} and Corollary~\ref{cor:punctured affine}. Similarly, since $x_a:\Spec{k(\sqrt{a})}\to\mb{A}^1_k-\{0\}$ is a closed point, we have $\chi^c((\mb{A}^1_k-\{0,x_a\})\amalg\Spec{k})=-\langle a\rangle$.
\end{enumerate}
In either case, we have constructed (for each $a\in k^\times/(k^\times)^2$) the desired varieties $X_{\langle a\rangle}$ and $X_{-\langle a\rangle}$ with $\chi^c(X_{\langle a\rangle})=\langle a\rangle$ and $\chi^c(X_{-\langle a\rangle})=-\langle a\rangle$.
\end{proof}

Now that we know that $\chi^c:K_0(\Var_k)\to\GW(k)$ is surjective, we can obtain a power structure on $\GW(k)$ that is compatible with $\chi^c$ and the power structure on $K_0(\Var_k)$ by proving $(1-t)^{\ker\chi^c}\in 1+t\cdot\ker\chi^c\dlb t\drb$. We will start by considering virtual classes of quasi-projective varieties.

\begin{prop}\label{prop:virtual classes in kernel}
Let $X$ be a quasi-projective $k$-variety. Assume that $[X]\in\ker\chi^c$. Then $(1-t)^{-[X]}\in 1+t\cdot\ker\chi^c\dlb t\drb$ if and only if $[\Sym^n X]\in\ker\chi^c$ for all $n\geq 1$.
\end{prop}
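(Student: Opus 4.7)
The plan is to unwind the definition of $(1-t)^{-[X]}$ using Lemma~\ref{lem:definition of mu_0}, then observe that the coefficients that appear are identified (after applying $\chi^c$) with the symmetric powers $\Sym^n X$. The whole proof will come down to concatenating three facts already established in the paper, and there is no serious obstacle.

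First I would note that, by the very formula of Lemma~\ref{lem:definition of mu_0},
\[
(1-t)^{-[X]} \;=\; \mu_0(1-t,-[X]) \;=\; 1+\sum_{n=1}^{\infty}[\Sym_\Delta^n X]\cdot t^n.
\]
So the condition $(1-t)^{-[X]}\in 1+t\cdot\ker\chi^c\dlb t\drb$ is literally the statement that $\chi^c([\Sym_\Delta^n X])=0$ for every $n\geq 1$. The proposition is therefore equivalent to the claim $\chi^c([\Sym_\Delta^n X])=\chi^c([\Sym^n X])$ for all $n\geq 1$ (under the assumption $[X]\in\ker\chi^c$; actually this identity will hold for every $X$).

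Next, I would invoke Proposition~\ref{prop:universally homeomorphic}, which gives $[\Sym_\Delta^n X]=[\Sym^n X]$ in the ring $K_0^\uh(\Var_k)$. Combining this with Corollary~\ref{cor:chi universal homeo}, which says that $\chi^c\colon K_0(\Var_k)\to\GW(k)$ factors through the quotient $K_0^\uh(\Var_k)$, we deduce $\chi^c([\Sym_\Delta^n X])=\chi^c([\Sym^n X])$ in $\GW(k)$ for every $n\geq 1$. (Note no assumption on $[X]$ was needed for this identification; the hypothesis $[X]\in\ker\chi^c$ only serves to single out the proposition's context.)

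Putting the two steps together: $(1-t)^{-[X]}$ lies in $1+t\cdot\ker\chi^c\dlb t\drb$ precisely when each $[\Sym_\Delta^n X]$ lies in $\ker\chi^c$, and by the previous paragraph this is the same as asking that each $[\Sym^n X]$ lies in $\ker\chi^c$. This gives the biconditional. The argument is a direct bookkeeping exercise, and the only subtlety one must keep track of is that symmetric powers and the ``stratified'' symmetric powers $\Sym_\Delta^n$ are only equal after passing to $K_0^\uh(\Var_k)$, which is exactly the level at which $\chi^c$ already lives thanks to Corollary~\ref{cor:chi universal homeo}.
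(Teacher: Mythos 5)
Your proof is correct and is essentially identical to the paper's: both unwind $(1-t)^{-[X]}=1+\sum_{n\geq 1}[\Sym_\Delta^n X]\cdot t^n$ via Lemma~\ref{lem:definition of mu_0}, then identify $\chi^c([\Sym_\Delta^n X])$ with $\chi^c([\Sym^n X])$ using Proposition~\ref{prop:universally homeomorphic} and Corollary~\ref{cor:chi universal homeo}. Your side remark that the hypothesis $[X]\in\ker\chi^c$ is not needed for that identification is also accurate.
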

\begin{proof}
By definition of the power structure on $K_0(\Var_k)$ (Lemma~\ref{lem:definition of mu_0}), we have $(1-t)^{-[X]}=1+\sum_{n\geq 1}[\Sym_\Delta^n X]\cdot t^n$. Thus $(1-t)^{-[X]}\in 1+t\cdot\ker\chi^c\dlb t\drb$ if and only  if $[\Sym_\Delta^n X]\in\ker\chi^c$ for all $n\geq 1$. Since $\Sym_\Delta^n X$ and $\Sym^n X$ are equal in $K_0^{\rm{uh}}(\rm{Var}_k)$ for all $n\geq 1$ (Proposition~\ref{prop:universally homeomorphic}), the desired result follows from Corollary~\ref{cor:chi universal homeo}.
\end{proof}

This leads us to Conjecture~\ref{conj:main}, which we restate here.

\begin{conj}\label{conj:sym power}
If $X$ is a quasi-projective $k$-variety with $[X]\in\ker\chi^c$, then $[\Sym^n X]\in\ker\chi^c$ for all $n\geq 1$ (equivalently, $[\Sym^n_{\Delta}X] \in \ker\chi^c$ for all $n \geq 1$).
\end{conj}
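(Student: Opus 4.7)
The plan is to reduce the conjecture to a statement about quotients by free actions of finite groups, and then attack that reduced statement via descent. First I would use the orbit-type stratification $\Sym^n X = \coprod_{\lambda \in \mathcal{P}(n)} \Sym^\lambda X$ recalled in the proof of Proposition~\ref{prop:universally homeomorphic}. By additivity (Proposition~\ref{prop:chi additive}) and Corollary~\ref{cor:chi universal homeo}, it suffices to show $[\Sym^\lambda X] \in \ker\chi^c$ for every partition $\lambda$ of $n$ with $m$ parts. Each such stratum is universally homeomorphic to a free quotient $(X^m - \Delta)/S_\lambda$ of an open subscheme of $X^m$. Multiplicativity (Proposition~\ref{prop:chi mult}) together with inclusion--exclusion over the big diagonal shows $[X^m - \Delta] \in \ker\chi^c$ whenever $[X] \in \ker\chi^c$. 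The conjecture thus reduces to the following assertion: \emph{if $Y$ is a quasi-projective $k$-variety, $G$ is a finite group acting freely on $Y$, and $[Y] \in \ker\chi^c$, then $[Y/G] \in \ker\chi^c$.}

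For this reduced problem, my next step would be to verify it on a convenient generating set. Two natural candidates are products of $\Spec(L)$ for finite separable extensions $L/k$, where one can compute $\chi^c(\Sym^n \Spec L)$ using Proposition~\ref{prop:trace} and the $S_n$-decomposition of $L^{\otimes n}$, and smooth projective varieties, where the de Rham computation of $\chi^c$ is available. The goal would be to extract a Macdonald-type generating-function identity of the form
\[\sum_{n \geq 0} \chi^c(\Sym^n X) \cdot t^n \;=\; F\bigl(\chi^c(X)\bigr)(t) \quad \text{in } \GW(k)\dlb t\drb,\]
with $F$ depending only on the class in $\GW(k)$. Any such $F$ would automatically send $0 \in \GW(k)$ to $1$, finishing the proof. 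This is essentially equivalent to asking that the candidate power structure on $\GW(k)$ of Theorem~\ref{thm:power structure} be compatible with $\chi^c$.

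The hard part, and in fact the central obstruction, is that $\chi^c$ is \emph{not} étale-local, as highlighted in the remark following Proposition~\ref{prop:cdh}. The quotient $Y \to Y/G$ is an étale $G$-torsor, yet $\chi^c$ does not satisfy a multiplicativity $\chi^c(Y) = |G| \cdot \chi^c(Y/G)$ in $\GW(k)$. Already for $Y = \Spec(L)$ with $L/k$ Galois of degree $|G|$, Proposition~\ref{prop:trace} gives $\chi^c(Y) = \Tr_{L/k}\langle 1 \rangle$, which differs from $|G|\langle 1 \rangle$ in $\GW(k)$ by discriminant and Hasse--Witt data that vanish only under strong assumptions on $k$ (e.g.~pythagorean, where one can conclude via rank and signature alone as in the proof outlined in the introduction). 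A viable attack would be to show that the failure of étale-multiplicativity is captured by a transfer-type correction term that itself lies in $\ker\chi^c$ whenever $[Y] \in \ker\chi^c$, but constructing such a correction and tracking it coherently across all strata $\Sym^\lambda X$ appears to require a genuinely new input beyond the formal properties of $\chi^c$ developed in this paper; this is exactly why the statement is left as a conjecture.
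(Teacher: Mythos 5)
This statement is Conjecture~\ref{conj:main} restated; the paper does not prove it, and neither do you --- which you correctly acknowledge at the end of your proposal. So there is no ``gap'' in the usual sense, but it is worth measuring your reduction against the partial progress the paper actually achieves. Your preliminary steps are sound: the orbit-type stratification of $\Sym^nX$, the inclusion--exclusion argument showing $[X^m-\Delta]\in\ker\chi^c$ (every stratum of the big diagonal and every intersection of pairwise diagonals is a product of copies of $X$, hence lies in the ideal $\ker\chi^c$), and the resulting reduction to controlling the free quotients $(X^m-\Delta)/S_\lambda$ from Proposition~\ref{prop:universally homeomorphic}. One caution: your intermediate assertion --- that $[Y/G]\in\ker\chi^c$ whenever a finite group $G$ acts freely on $Y$ with $[Y]\in\ker\chi^c$ --- is a priori strictly stronger than the conjecture, which only requires the very special quotients arising from configuration spaces of $X$ itself; it could fail even if the conjecture holds, so the reduction should be treated as a sufficient condition rather than an equivalence. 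The obstruction you then identify, the failure of \'etale-locality of $\chi^c$, is exactly the one named in the remark following Proposition~\ref{prop:cdh}, and your observation that a Macdonald-type formula $\sum_n\chi^c(\Sym^nX)t^n=F(\chi^c(X))(t)$ is essentially equivalent to the compatible power structure of Theorem~\ref{thm:power structure} is also correct.

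What you stop short of, and what the paper does extract unconditionally from precisely your reduction, is Corollary~\ref{cor:symmetric torsion}: the rank of $\chi^c(\Sym^nX)$ vanishes because $\chi^{\et}=\rank\chi^c$ (Proposition~\ref{prop:rank = etale}) \emph{is} multiplicative over the \'etale quotients $(X^m-\Delta)\to(X^m-\Delta)/S_\lambda$ and over the forgetful fibrations between configuration spaces, while the signature vanishes by real realization (Proposition~\ref{prop:sign = etale of real locus}) combined with the Kharlamov--R\u{a}sdeaconu decomposition of $(\Sym^nX)_\sigma(R)$ and the binomial formula for $\chi^t(\Sym^nM)$. Pfister's local-global principle then places $\chi^c(\Sym^nX)$ in $\GW(k)_\tors$, which proves the conjecture whenever $\GW(k)$ is torsion-free (Theorem~\ref{thm:conj over pythagorean}); this unconditional torsion statement is the strongest consequence of the formal properties developed in the paper, and is the natural endpoint your proposal should aim for rather than the fully general (and genuinely open) claim.
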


Conditional on Conjecture~\ref{conj:sym power}, we can now consider classes in $\ker\chi^c$ that are purely sums of genuine classes or purely sums of virtual classes.

\begin{cor}\label{cor:sums of classes in kernel}
Assume Conjecture~\ref{conj:sym power}. Let $X_1,\ldots,X_n$ be quasi-projective $k$-varieties. Let $M=\pm\sum_{i=1}^n[X_i]$. Assume that $M\in\ker\chi^c$. Then $(1-t)^M\in 1+t\cdot\ker\chi^c\dlb t\drb$.
\end{cor}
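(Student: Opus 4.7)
The plan is to reduce the statement to the single-variety case by taking a disjoint union. Set $X := \coprod_{i=1}^n X_i$, which is quasi-projective because each $X_i$ is, and satisfies $[X] = \sum_{i=1}^n [X_i]$ in $K_0(\Var_k)$. Since $\ker\chi^c$ is a subgroup of $K_0(\Var_k)$ (being the kernel of a ring homomorphism) and $M = \pm[X]$ lies in $\ker\chi^c$ by hypothesis, we have $[X] \in \ker\chi^c$ regardless of the sign. Invoking Conjecture~\ref{conj:sym power} gives $[\Sym^j X] \in \ker\chi^c$ for every $j \geq 1$, so Proposition~\ref{prop:virtual classes in kernel} yields $(1-t)^{-[X]} \in 1 + t\cdot\ker\chi^c\dlb t\drb$. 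This already handles the case $M = -[X]$.

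For the case $M = [X]$, the plan is to invert. By axiom (iv) of Definition~\ref{def:power structure}, $(1-t)^{[X]} \cdot (1-t)^{-[X]} = (1-t)^0 = 1$, so it suffices to show that $1 + t\cdot\ker\chi^c\dlb t\drb$ is closed under multiplicative inverses inside $1 + t\cdot K_0(\Var_k)\dlb t\drb$. Given $a(t) = 1 + \sum_{n\geq 1} a_n t^n$ with $a_n \in \ker\chi^c$, the coefficients of its inverse $b(t) = 1 + \sum_{n\geq 1} b_n t^n$ are determined recursively by
\[
b_n = -a_n - \sum_{\substack{i+j=n\\ i,j\geq 1}} a_i b_j.
\]
Because $\ker\chi^c$ is an ideal, a straightforward induction on $n$ shows $b_n \in \ker\chi^c$, proving the closure under inversion. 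Applying this to $a(t) = (1-t)^{-[X]}$ gives $(1-t)^{[X]} \in 1 + t\cdot\ker\chi^c\dlb t\drb$ as well.

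There is no real obstacle here beyond invoking Conjecture~\ref{conj:sym power}: the genuine content is the conjecture, and the above proof is essentially bookkeeping. The two technical observations to highlight are (i) that passing to the disjoint union $\coprod X_i$ is precisely what lets us bring the conjecture to bear (otherwise we would need the individual $[X_i]$ to lie in $\ker\chi^c$, which we do not know), and (ii) that the asymmetry between $+[X]$ and $-[X]$ in the exponent is absorbed by the fact that $1 + t\cdot\ker\chi^c\dlb t\drb$ forms a subgroup of $(1 + t\cdot K_0(\Var_k)\dlb t\drb,\,\cdot\,)$.
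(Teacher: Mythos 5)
Your proof is correct and follows essentially the same route as the paper: reduce to $X=\coprod_i X_i$, handle $M=-[X]$ via Proposition~\ref{prop:virtual classes in kernel}, and handle $M=[X]$ by inverting the power series and running the coefficient recursion, using that $\ker\chi^c$ is an ideal. The paper phrases the second step as an explicit induction on the coefficients of $(1-t)^{M}\cdot(1-t)^{-M}=1$ rather than as closure of $1+t\cdot\ker\chi^c\dlb t\drb$ under inverses, but the computation is identical.
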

\begin{proof}
Let $X=\coprod_{i=1}^nX_i$, so that $\sum_{i=1}^n[X_i]=[X]$. Since each $X_i$ is quasi-projective, so is $X$. In the case that $-[X]=M$, the desired result is now given in Proposition~\ref{prop:virtual classes in kernel}.

Suppose instead that $[X]=M$.
Properties (i) and (iii) of Definition~\ref{def:pre-power structure} imply that $(1-t)^M\cdot(1-t)^{-M}=1$. Let $(1-t)^M=1+\sum_{i\geq 1}A_it^i$ and $(1-t)^{-M}=1+\sum_{i\geq 1}B_it^i$, where $A_i,B_i\in K_0(
\Var_k)$. Set $A_0=B_0=1$. With this notation, we have
\[1=1+\sum_{\ell\geq 1}\left(\sum_{\substack{i,j\geq 0\\i+j=\ell}}A_iB_j\right)t^\ell.\]
The first paragraph of this proof implies that $B_i\in\ker\chi^c$ for all $i$. We now proceed by induction. For $\ell=1$, we have $A_1+B_1=0$, so $B_1\in\ker\chi^c$ implies that $A_1\in\ker\chi^c$. Now assume that $A_1,\ldots,A_n\in\ker\chi^c$. Since $0=A_{n+1}+\sum_{i=0}^nA_iB_{n+1-i}$ and $A_iB_{n+1-i}\in\ker\chi^c$ for $0\leq i\leq n$, we find that $A_{n+1}\in\ker\chi^c$ as well. It follows that $(1-t)^M\in 1+t\cdot\ker\chi^c\dlb t\drb$, as desired.
\end{proof}

Finally, we can consider arbitrary classes in $\ker\chi^c$.

\begin{prop}\label{prop:kernel of chi^c}
Assume Conjecture~\ref{conj:sym power}. If $M\in K_0(\Var_k)$ with $\chi^c(M)=0$, then $(1-t)^M\in 1+t\cdot\ker\chi^c\dlb t\drb$.
\end{prop}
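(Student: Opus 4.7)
The plan is to reduce an arbitrary class $M\in\ker\chi^c$ to the effective-or-anti-effective case already handled by Corollary~\ref{cor:sums of classes in kernel}. The key maneuver uses the surjectivity of $\chi^c$ onto $\GW(k)$ by classes of genuine varieties (Proposition~\ref{prop:chi^c surjective}) to absorb the obstruction to effectivity while remaining inside $\ker\chi^c$.

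Concretely, I would first write $M = [X] - [Y]$ with $X,Y$ quasi-projective, which is possible since $K_0(\Var_k)$ is generated as an abelian group by classes of quasi-projective varieties. Since $\chi^c(M) = 0$, we have $\chi^c([X]) = \chi^c([Y]) =: q \in \GW(k)$. Apply Proposition~\ref{prop:chi^c surjective} to produce a quasi-projective variety $W$ with $\chi^c([W]) = -q$. Then $M^+ := [X] + [W]$ and $M^- := [Y] + [W]$ are each a sum of classes of quasi-projective varieties, and both have vanishing motivic Euler characteristic by construction.

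Now decompose $M = M^+ + (-M^-)$. By property (iv) of Definition~\ref{def:power structure} applied to the power structure $\mu_0$ on $K_0(\Var_k)$,
\[
(1-t)^M \;=\; (1-t)^{M^+} \cdot (1-t)^{-M^-}.
\]
Corollary~\ref{cor:sums of classes in kernel} applies directly to each factor: to $M^+$ with the ``$+$'' sign, and to $-M^-$ with the ``$-$'' sign. Hence both $(1-t)^{M^+}$ and $(1-t)^{-M^-}$ lie in $1 + t\cdot\ker\chi^c\dlb t\drb$. Since this subset of $K_0(\Var_k)\dlb t\drb$ is multiplicatively closed (Notation~\ref{notn:ker chi mult closed}), the product $(1-t)^M$ lies there as well.

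There is no substantive obstacle here beyond invoking earlier results: the whole argument is a clean two-line reduction whose only non-bookkeeping ingredient is that $\chi^c$ hits every element of $\GW(k)$ via a single quasi-projective variety, which has already been established. The conceptual difficulty is instead bundled into the Conjecture itself, which is used through Proposition~\ref{prop:virtual classes in kernel} in the proof of Corollary~\ref{cor:sums of classes in kernel}.
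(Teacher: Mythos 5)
Your proof is correct, but it takes a genuinely different route from the paper's. Both arguments have the same skeleton: write $M$ as a difference $M=M^+-M^-$ of two classes, each of which is a sum of classes of quasi-projective varieties and each of which lies in $\ker\chi^c$, apply Corollary~\ref{cor:sums of classes in kernel} to $M^+$ and to $-M^-$, and use that $1+t\cdot\ker\chi^c\dlb t\drb$ is multiplicatively closed. The difference is in how the auxiliary effective classes are manufactured. The paper stratifies $X$ into locally closed affine pieces $A_i\subset\mb{A}^{n_i}$, embeds each into $B_i:=\mb{A}^{2n_i}-\{(1,\dots,1)\}$ so that $[B_i]\in\ker\chi^c$ by Corollary~\ref{cor:punctured affine}(iii) and $[B_i]-[A_i]=[B_i-A_i]$ is effective, and then takes $M^+=[B]$, $M^-=[C\amalg Y]$. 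You instead invoke Proposition~\ref{prop:chi^c surjective} to produce a single variety $W$ with $\chi^c(W)=-\chi^c(X)$ and set $M^+=[X\amalg W]$, $M^-=[Y\amalg W]$. Your version is shorter and arguably more conceptual: it isolates exactly the input needed, namely that every class in $\GW(k)$ is hit by a genuine variety (not merely a virtual class), which the paper has already established and even emphasizes as "remarkable." The only point you should make explicit is that the $W$ supplied by Proposition~\ref{prop:chi^c surjective} is quasi-projective (as required by Corollary~\ref{cor:sums of classes in kernel}); this holds because the varieties constructed in its proof are finite disjoint unions and products of open subschemes of $\mb{P}^1_k$ and $\mb{A}^n_k$. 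The paper's more hands-on construction buys nothing extra here beyond being independent of the surjectivity statement, which in any case precedes this proposition in the text, so there is no circularity in your argument.
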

\begin{proof}
Since $K_0(\Var_k)$ is generated as a group by classes of quasi-projective $k$-varieties \cite[Chapter 2, Corollary 2.6.6(a)]{CLNS:MotivicIntegration}, there exist quasi projective $k$-varieties $\{X_i\}_{i=1}^a$ and $\{Y_j\}_{j=1}^b$ such that $M=\sum_{i=1}^a [X_i]-\sum_{j=1}^b [Y_j]$. Let $X=\coprod_{i=1}^a X_i$ and $Y=\coprod_{j=1}^b Y_j$, so that $M=[X]-[Y]$.

Since $X$ is quasi-projective over a noetherian base, we can partition $X$ into a finite set of locally closed affine subvarieties $\{A_i\}_{i=1}^c$, so that $[X]=\sum_{i=1}^c[A_i]$. Each of these affine varieties $A_i$ is a subvariety of $\mb{A}^{n_i}$, where the dimension $n_i\geq 1$ depends on $i$. Using any closed embedding $\mb{A}^{n_i}\subset\mb{A}^{2n_i}-\{(1,\ldots,1)\}$ (for example, given by the vanishing of the first $n$ variables),
we find that $A_i$ also occurs as a subvariety of $B_i:=\mb{A}^{2n_i}-\{(1,\ldots,1)\}$. In particular, $B_i-A_i$ is a quasi-projective $k$-variety with $[B_i-A_i]=[B_i]-[A_i]$.

Let $B:=\coprod_{i=1}^c B_i$ and $C:=\coprod_{i=1}^c(B_i-A_i)$. By Corollary~\ref{cor:punctured affine} (iii) (after translating the origin to $(1,\ldots,1)$), we have $[B_i]\in\ker\chi^c$ and thus $[B]\in\ker\chi^c$. Moreover,
\begin{align*}
    M-\sum_{i=1}^c[B_i]&=-[Y]+\sum_{i=1}^c[A_i]-\sum_{i=1}^c[B_i]\\
    &=-[Y]-\sum_{i=1}^c[B_i-A_i]\\
    &=-[C\amalg Y].
\end{align*}
Since $M,[B]\in\ker\chi^c$, we have $[C\amalg Y]\in\ker\chi^c$. In particular, we have $(1-t)^M=(1-t)^{-[C\amalg Y]}\cdot(1-t)^{[B]}$ with $[C\amalg Y],[B]\in\ker\chi^c$. By Corollary~\ref{cor:sums of classes in kernel}, it follows that $(1-t)^{[B]},(1-t)^{-[C\amalg Y]}\in 1+t\cdot\ker\chi^c\dlb t\drb$. We conclude by noting that $1+t\cdot\ker\chi^c\dlb t\drb$ is multiplicatively closed (see Notation~\ref{notn:ker chi mult closed}).
\end{proof}

We are now ready to prove Theorem~\ref{thm:power structure}, which we restate here.

\begin{thm}\label{thm:power structure (second appearance)}
Let $k$ be a field of characteristic not 2. Let
\[\mu_0:(1+t\cdot K_0(\Var_k)\dlb t\drb)\times K_0(\Var_k)\to 1+t\cdot K_0(\Var_k)\dlb t\drb\]
be the power structure defined in Section~\ref{sec:power structure K0}. Then Conjecture~\ref{conj:main} is true if and only if there exists a power structure
\[\mu_\GW:(1+t\cdot\GW(k)\dlb t\drb)\times\GW(k)\to 1+t\cdot \GW(k)\dlb t\drb\]
such that
\[\chi^c(\mu_0(A(t),M))=\mu_\GW(\chi^c(A(t)),\chi^c(M))\]
for all $A(t)\in 1+t\cdot K_0(\Var_k)\dlb t\drb$ and $M\in K_0(\Var_k)$.
\end{thm}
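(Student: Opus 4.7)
The plan is to prove the biconditional by treating each direction separately; both directions are essentially assembly jobs from the machinery established earlier in the paper, so the real work was already done in Proposition~\ref{prop:chi^c surjective}, Proposition~\ref{prop:kernel of chi^c}, Proposition~\ref{prop:universally homeomorphic}, and Lemma~\ref{lem:induced power structure}. There will be no serious new obstacle.

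For the forward direction, I would assume Conjecture~\ref{conj:main} and build $\mu_\GW$ via Lemma~\ref{lem:induced power structure} applied to the ring homomorphism $\chi^c : K_0(\Var_k) \to \GW(k)$. By Proposition~\ref{prop:chi^c surjective}, this homomorphism is surjective (using here that $\Char{k} \neq 2$ so that $\chi^c$ lands in $\GW(k)$, cf.~Corollary~\ref{cor:gw inside gw[1/p]}). By Proposition~\ref{prop:kernel of chi^c}, for every $M \in \ker\chi^c$ we have $\mu_0(1-t, M) \in 1+t\cdot\ker\chi^c\dlb t\drb$, which is precisely the hypothesis of Lemma~\ref{lem:induced power structure} applied to the pre-power structure underlying $\mu_0$. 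That lemma then produces a unique power structure $\mu_\GW$ on $\GW(k)$ satisfying $\chi^c(\mu_0(A(t), M)) = \mu_\GW(\chi^c(A(t)), \chi^c(M))$ for every $A(t)$ and $M$.

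For the reverse direction, I would assume the existence of a compatible $\mu_\GW$ and deduce the conjecture directly. Let $X$ be a quasi-projective $k$-variety with $[X] \in \ker\chi^c$. By Lemma~\ref{lem:definition of mu_0},
\[\mu_0(1-t, -[X]) = 1 + \sum_{n\geq 1}[\Sym^n_\Delta X]\cdot t^n.\]
Applying $\chi^c$ coefficient-wise and using the assumed compatibility together with the axiom $A(t)^0 = 1$ in Definition~\ref{def:power structure}, we compute
\[\chi^c(\mu_0(1-t, -[X])) = \mu_\GW(\chi^c(1-t), 0) = 1\]
in $1 + t \cdot \GW(k)\dlb t\drb$. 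Comparing coefficients gives $\chi^c([\Sym^n_\Delta X]) = 0$ for all $n \geq 1$. Finally, by Proposition~\ref{prop:universally homeomorphic} we have $[\Sym^n_\Delta X] = [\Sym^n X]$ in $K_0^\uh(\Var_k)$, and by Corollary~\ref{cor:chi universal homeo} the map $\chi^c$ factors through $K_0^\uh(\Var_k)$, so $\chi^c([\Sym^n X]) = 0$ for all $n \geq 1$, verifying Conjecture~\ref{conj:main}.

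The only point requiring minor care is checking that the forward direction's ingredients interlock correctly: Proposition~\ref{prop:kernel of chi^c} is stated for arbitrary $M \in \ker\chi^c$ (not merely effective classes), which is exactly what Lemma~\ref{lem:induced power structure} demands, so no additional reduction is needed. Everything else is bookkeeping with the axioms of a power structure.
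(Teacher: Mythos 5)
Your proposal is correct and follows essentially the same route as the paper: the forward direction assembles Proposition~\ref{prop:chi^c surjective}, Proposition~\ref{prop:kernel of chi^c}, and Lemma~\ref{lem:induced power structure}, and the reverse direction evaluates $\chi^c(\mu_0(1-t,-[X]))=\mu_\GW(\chi^c(1-t),0)=1$ and compares coefficients, passing from $\Sym^n_\Delta X$ to $\Sym^n X$ via universal homeomorphism invariance. No gaps.
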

\begin{proof}
First assume that Conjecture~\ref{conj:main} is true. Since $\chi^c:K_0(\Var_k)\to\GW(k)$ is surjective (Proposition~\ref{prop:chi^c surjective}) and $\mu_0(1-t,M)\in 1+t\cdot\ker\chi^c\dlb t\drb$ for all $M\in\ker\chi^c$ (Proposition~\ref{prop:kernel of chi^c}), the result follows from Lemma~\ref{lem:induced power structure}.

Now assume that the power structure $\mu_\GW$ exists. The idea is to use Proposition~\ref{prop:virtual classes in kernel}: given any quasi-projective variety $X$ with $\chi^c(X)=0$, we have
\begin{align*}
    1&=\mu_\GW(1-t,0)\\
    &=\mu_\GW(\chi^c(1-t),\chi^c(-[X]))\\
    &=\chi^c(\mu_0((1-t),-[X]))\\
    &=1+\sum_{n\geq 1}\chi^c(\Sym_\Delta^n X)\cdot t^n\\
    &=1+\sum_{n\geq 1}\chi^c(\Sym^n X)\cdot t^n.
\end{align*}
It follows that $\chi^c(\Sym^n X)=0$ for all $n\geq 1$, which implies Conjecture~\ref{conj:main}.
\end{proof}

\section{Enriching G\"ottsche's formula}\label{sec:gottsche}
Now that we have developed a conjectural power structure on $\GW(k)$ that is compatible with the power structure on $K_0(\Var_k)$ via $\chi^c$, we are prepared to return to the Yau--Zaslow formula. The relevant generating function comes from G\"ottsche's formula \cite{Got90}, which states that for a smooth projective surface $X$ over an algebraically closed field, the Hilbert schemes $\Hilb^gX$ satisfy
\begin{equation}\label{eq:gottsche}
\sum_{g\geq 0}\chi(\Hilb^gX)\cdot t^g=\prod_{n\geq 1}(1-t^n)^{-\chi(X)}.
\end{equation}
Equation~\ref{eq:gottsche} admits a generalization in terms of the power structure on $K_0(\Var_k)$. Given a scheme $Y$ and a point $y\in Y$, let $\Hilb^m_yY$ denote the Hilbert scheme of length $m$, dimension 0 subschemes of $Y$ that are concentrated at the point $y$. Then for any smooth quasi-projective variety $X$ of dimension $d$, there is an equality 
\begin{equation}\label{eq:power gottsche}
\sum_{g\geq 0}[\Hilb^gX]\cdot t^g=\big(\sum_{m\geq 0}[\Hilb^m_0(\mb{A}^d)]\cdot t^m\big)^{[X]}
\end{equation}
in $K_0^\uh(\Var_k)\dlb t\drb$ \cite[Theorem 1]{GZLMH:hilbert}. (This result is stated in $K_0(\Var_\mb{C})$ in \textit{loc.~cit.}, but one can repeat the same argument verbatim over any field with the power structure on $K_0^\uh(\Var_k)$ given in Section~\ref{sec:power structure K0}.)

To deduce Equation~\ref{eq:gottsche} from Equation~\ref{eq:power gottsche}, one uses the compatibility of the power structures on $K_0^\uh(\Var_{\overline{k}})$ and $\mb{Z}$ via the Euler characteristic, as well as Ellingsrud--Str\o mme's computation of $\chi(\Hilb^m_0(\mb{A}^2))$ as the partition number of $m$ \cite{ES87}:
\[\sum_{m\geq 0}\chi(\Hilb^m_0(\mb{A}^2))\cdot t^m=\prod_{n\geq 1}(1-t^n)^{-1}.\]
To enrich G\"ottsche's formula, we therefore need Conjecture~\ref{conj:main} to hold (which gives the necessary power structure on $\GW(k)$) and a computation of $\chi^c(\Hilb^m_0(\mb{A}^2))$.

\subsection{Computing $\chi^c(\Hilb^m_0(\mb{A}^2))$}
We now set out to give a generating series for the motivic Euler characteristic of the local punctual Hilbert scheme $\Hilb^m_0(\mb{A}^2)$. We will show that the cell decomposition of $\Hilb^m_0(\mb{A}^2_\mb{C})$ given in \cite{ES88} in fact holds over arbitrary fields. This will allow us to compute $\chi^c(\Hilb^m_0(\mb{A}^2_k))$ in terms of $\chi^c(\mb{A}^i)=\langle -1\rangle^i$. As a result, we will find that $\chi^c(\Hilb^m_0(\mb{A}^2_k))$ is determined entirely by its values for $k=\mb{C}$ and $\mb{R}$, which were computed by Ellingsrud--Str\o mme \cite{ES87} and Kharlamov--R\u{a}sdeaconu \cite{KR15}, respectively.

\begin{defn}
A \textit{cellular decomposition} of a scheme $X$ is a chain $X=X_n\supset X_{n-1}\supset\cdots\supset X_1\supset X_0=\varnothing$ of closed subschemes such that each $X_i-X_{i-1}$ is a disjoint union of locally closed subschemes isomorphic to affine spaces.
\end{defn}

Given such a cellular decomposition of $X$, note that $X=[X_n-X_{n-1}]+[X_{n-1}]$ in $K_0(\Var_k)$. Inducting, we find that $[X]=\sum_{i=1}^n[X_i-X_{i-1}]$, as $[X_0]=0$. We can then further decompose each class $[X_i-X_{i-1}]$ by expressing $X_i-X_{i-1}$ as a disjoint union of affine spaces.

\begin{lem}\label{lem:cellular decomposition}
Let $X$ be a scheme over a field of characteristic not 2. If $X$ admits a cellular decomposition over $k$, then $\chi^c(X)=a\langle 1\rangle+b\langle -1\rangle$ for some $a,b\in\mb{N}\cup\{0\}$.
\end{lem}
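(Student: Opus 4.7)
The plan is to combine the additivity of $\chi^c$ on locally closed decompositions (Proposition~\ref{prop:chi additive}) with the explicit computation $\chi^c(\mathbb{A}^d_k) = \langle -1\rangle^d$ (Corollary~\ref{cor:chi A^n}), and then observe that $\langle -1\rangle^d$ is either $\langle 1\rangle$ or $\langle -1\rangle$ depending on the parity of $d$.

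More precisely, given a cellular decomposition $X = X_n \supset X_{n-1} \supset \cdots \supset X_0 = \varnothing$, I would argue by induction on $n$. Since each $X_i$ is closed in $X_{i+1}$, the complement $X_i - X_{i-1}$ is open in $X_i$, and Proposition~\ref{prop:chi additive} gives $\chi^c(X_i) = \chi^c(X_{i-1}) + \chi^c(X_i - X_{i-1})$. Iterating and using $\chi^c(X_0) = \chi^c(\varnothing) = 0$, we obtain $\chi^c(X) = \sum_{i=1}^n \chi^c(X_i - X_{i-1})$. By hypothesis each $X_i - X_{i-1}$ is a disjoint union of affine spaces $\coprod_j \mathbb{A}^{d_{i,j}}_k$, so another application of Proposition~\ref{prop:chi additive} together with Corollary~\ref{cor:chi A^n} yields
\[
\chi^c(X) \;=\; \sum_{i,j} \langle -1\rangle^{d_{i,j}}.
\]

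To conclude, I would split this sum according to the parity of $d_{i,j}$: when $d_{i,j}$ is even the corresponding summand equals $\langle 1\rangle$, and when $d_{i,j}$ is odd it equals $\langle -1\rangle$. Setting $a$ to be the number of even-dimensional cells and $b$ to be the number of odd-dimensional cells gives $\chi^c(X) = a\langle 1\rangle + b\langle -1\rangle$ with $a,b \in \mathbb{N} \cup \{0\}$, as required.

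There is no real obstacle here; the statement is a direct bookkeeping consequence of $\chi^c$ being a motivic measure plus the computation of the motivic Euler characteristic of affine space. The hypothesis $\operatorname{char}(k) \neq 2$ is needed only to ensure $\chi^c$ takes values in $\GW(k)$ (Corollary~\ref{cor:gw inside gw[1/p]}) so that the classes $\langle \pm 1\rangle$ are well-defined elements of the target.
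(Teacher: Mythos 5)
Your proof is correct and follows essentially the same route as the paper: decompose $[X]$ into the classes of the strata $X_i - X_{i-1}$, write each as a disjoint union of affine spaces, and apply $\chi^c(\mb{A}^d_k)=\langle -1\rangle^d \in \{\langle 1\rangle, \langle -1\rangle\}$. The parity bookkeeping at the end is the same observation the paper makes implicitly.
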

\begin{proof}
If $X$ admits a cellular decomposition, then
\begin{align*}
    [X]&=\sum_{i=1}^n[X_i-X_{i-1}]\\
    &=\sum_{i=1}^n\sum_{j=1}^{m_i}[\mb{A}^{d(i,j)}_k],
\end{align*}
where $d(i,j)\in\mb{N}\cup\{0\}$. Applying $\chi^c$ and recalling that $\chi^c(\mb{A}^{d(i,j)})=\langle\pm 1\rangle$ proves the claim.
\end{proof}

Since the element $a\langle 1\rangle+b\langle -1\rangle\in\GW(k)$ is completely determined by its rank and signature, Lemma~\ref{lem:cellular decomposition} allows us to compute the motivic Euler characteristic of schemes admitting a cellular decomposition purely in terms of topological data, namely the Euler characteristic of the complex and real loci. We now apply this strategy to $\Hilb^m_0(\mb{A}^2)$.

\begin{prop}\label{prop:cell decomposition of Hilb0}
If $k$ is a field, then the scheme $\Hilb^m_0(\mb{A}^2_k)$ admits a cellular decomposition.
\end{prop}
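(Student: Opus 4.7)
The plan is to establish the cell decomposition by adapting the Ellingsrud--Str\o mme construction over $\mb{C}$ to the general base $k$. Their argument is essentially combinatorial, based on monomial ideals and a suitable torus action, and goes through unchanged over any field.

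First, I would introduce the $\mb{G}_m$-action on $\mb{A}^2_k$ given by $t \cdot (x,y) = (tx, t^a y)$ for a sufficiently generic integer $a \geq 2$, which induces an action on $\Hilb^m_0(\mb{A}^2_k)$. The $\mb{G}_m$-fixed points are exactly the monomial ideals $I_\lambda \subset k[x,y]$ of colength $m$ supported at the origin; these fixed points are $k$-rational and are in bijection with the set of partitions $\lambda$ of $m$.

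Next, for each partition $\lambda$, I would define the cell $C_\lambda \subset \Hilb^m_0(\mb{A}^2_k)$ as the locally closed subscheme parametrizing ideals $I$ such that the monomials outside the Young diagram of $\lambda$ descend to a $k$-basis of $k[x,y]/I$. Following \cite{ES88}, every ideal in $C_\lambda$ has a unique generating set of the form $x^iy^j - \sum_{(p,q)\notin\lambda} c_{p,q}^{(i,j)} x^p y^q$, with $(i,j)$ ranging over the border of $\lambda$ and the scalars $c_{p,q}^{(i,j)}$ serving as affine coordinates on $C_\lambda$, subject to the Buchberger-type closure relations ensuring the chosen generators cut out an actual ideal. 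These relations have integer coefficients arising purely from the combinatorics of $\lambda$, so $C_\lambda \cong \mb{A}^{d(\lambda)}_k$ for an explicit $d(\lambda)$ depending only on $\lambda$.

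Finally, I would fix a total order on partitions of $m$ refining the closure order on the cells (for instance, a linear refinement of the Bialynicki-Birula attracting order) and set $X_i$ to be the union of the closures of the first $i$ cells. Each $X_i$ is closed in $\Hilb^m_0(\mb{A}^2_k)$ and $X_i - X_{i-1} = C_{\lambda_i}$ is an affine space, producing the required chain $\varnothing = X_0 \subset X_1 \subset \cdots \subset X_n = \Hilb^m_0(\mb{A}^2_k)$. The main obstacle is verifying that the normal-form description of each $C_\lambda$ gives an affine space over $k$ itself rather than only after base change; but this is mild, since the monomial ideals $I_\lambda$ and the torus action are defined over $\Spec \mb{Z}$, so the normal-form coordinates and the polynomial relations they satisfy are also defined over $\Spec \mb{Z}$, and the identification $C_\lambda \cong \mb{A}^{d(\lambda)}_k$ is obtained by base change from the universal picture.
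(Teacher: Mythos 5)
Your overall strategy---transporting the Ellingsrud--Str\o mme construction to arbitrary $k$ by observing that all the data is defined over $\Spec\mb{Z}$---is in the same spirit as the paper's proof, which simply notes that the requisite Bia\l ynicki-Birula theory holds over arbitrary fields. But the middle of your argument has a genuine gap. First, the locus of ideals $I$ for which a fixed set of $m$ monomials spans $k[x,y]/I$ is an \emph{open} chart of the Hilbert scheme, and these charts overlap (a generic ideal lies in many of them), so as literally defined your $C_\lambda$ cannot be the disjoint strata of a cell decomposition. The actual cell is the Bia\l ynicki-Birula attracting set $\{I:\lim_{t\to 0}t\cdot I=I_\lambda\}$, equivalently the Gr\"obner stratum of ideals whose initial ideal for the chosen weight order is $I_\lambda$; this is a proper subset of the chart, cut out by the vanishing of the normal-form coefficients of positive weight. (Also, it is the monomials \emph{inside} the Young diagram of $\lambda$ that give the basis of the $m$-dimensional quotient; the monomials outside it lie in $I_\lambda$ and are infinite in number.)

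Second, and more seriously, the assertion that the normal-form coefficients ``subject to the Buchberger-type closure relations'' exhibit $C_\lambda\cong\mb{A}^{d(\lambda)}_k$ is precisely the nontrivial content of the theorem and cannot simply be asserted. The closure relations cut out an affine scheme; knowing its equations have integer coefficients only shows it is defined over $\mb{Z}$, not that it is an affine space. Gr\"obner strata are not affine spaces in general. That these particular strata are affine spaces is what Ellingsrud--Str\o mme prove, either via Bia\l ynicki-Birula's theorem applied to a \emph{smooth} ambient Hilbert scheme---note that $\Hilb^m_0(\mb{A}^2)$ itself is singular, so one must additionally argue that it is a union of cells of the smooth scheme, another point your proposal skips---or, alternatively, via the canonical Hilbert--Burch parametrization of Gr\"obner cells, which is special to ideals of codimension two in the plane. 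One of these inputs must be invoked and verified over an arbitrary field; supplying exactly that (Bia\l ynicki-Birula theory over general fields) is what the paper's one-line proof does.
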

\begin{proof}
Although the cellular decompositions of $\Hilb^m(\mb{A}^2)$ and $\Hilb^m_0(\mb{A}^2)$ in \cite[\S3]{ES88} are only stated over $\mb{C}$ in loc.~cit., the requisite Bia\l ynicki-Birula theory \cite{BB73,BB76} in fact holds over arbitrary fields (see e.g.~\cite[\S1, \S2, and Example 7.9]{JS19}). In particular, $\Hilb^m_0(\mb{A}^2_k)$ admits a cellular decomposition.
\end{proof}

As a result, we can deduce the generating function of $\chi^c(\Hilb^m_0(\mb{A}^2_k))$ from the complex and real settings.

\begin{cor}\label{cor:generating function for local hilbert}
Let $k$ be a field of characteristic not 2. Then
\begin{equation}\label{eq:motivic gottsche for local hilbert}
\sum_{m\geq 0}\chi^c(\Hilb^m_0(\mb{A}^2_k))\cdot t^m=\prod_{n\geq 1}(1-\langle -1\rangle^{n-1}\cdot t^n)^{-1}.
\end{equation}
\end{cor}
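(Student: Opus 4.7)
The plan is to reduce the identity to a purely combinatorial statement about partitions by using the cellular decomposition of Proposition~\ref{prop:cell decomposition of Hilb0}. Since $\chi^c$ is additive (Proposition~\ref{prop:chi additive}) and $\chi^c(\mb{A}^d_k) = \langle -1 \rangle^d$ (Corollary~\ref{cor:chi A^n}), any cellular decomposition immediately translates into an expression of $\chi^c(\Hilb^m_0(\mb{A}^2_k))$ as a sum $\sum_d c_d \langle -1 \rangle^d$, where $c_d$ counts the cells of dimension $d$.

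First, I would invoke Ellingsrud--Str{\o}mme's explicit description of the Bia\l ynicki--Birula cells of $\Hilb^m_0(\mb{A}^2)$: they are indexed by partitions $\lambda \vdash m$, with the cell $C_\lambda$ having dimension $m - \ell(\lambda)$, where $\ell(\lambda)$ denotes the number of parts of $\lambda$. The cell count and dimensions are independent of the base field (since the torus action and fixed-point analysis are defined over $\mb{Z}$, as used in the proof of Proposition~\ref{prop:cell decomposition of Hilb0}). This yields
\[\chi^c(\Hilb^m_0(\mb{A}^2_k)) = \sum_{\lambda \vdash m} \langle -1 \rangle^{m - \ell(\lambda)}.\]

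Second, I would match this with the coefficient of $t^m$ on the right-hand side by expanding each factor as a geometric series:
\[\prod_{n\geq 1}(1 - \langle -1\rangle^{n-1} t^n)^{-1} = \prod_{n\geq 1} \sum_{k_n \geq 0} \langle -1 \rangle^{k_n (n-1)} t^{n k_n}.\]
Writing a partition $\lambda \vdash m$ in multiplicity form $\lambda = (1^{k_1} 2^{k_2} \cdots)$ gives $m = \sum_n n k_n$ and $\ell(\lambda) = \sum_n k_n$, so the associated monomial is $\langle -1 \rangle^{\sum_n k_n (n-1)} t^m = \langle -1 \rangle^{m - \ell(\lambda)} t^m$. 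Summing over all partitions of $m$ recovers the expression above.

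The main obstacle is the first step, namely confirming that the Bia\l ynicki--Birula decomposition over an arbitrary field of characteristic not $2$ has exactly the same combinatorics (partition indexing and dimension formula) as over $\mb{C}$. As a backup, one could avoid the explicit dimension formula entirely: Lemma~\ref{lem:cellular decomposition} already gives $\chi^c(\Hilb^m_0(\mb{A}^2_k)) = a_m \langle 1 \rangle + b_m \langle -1 \rangle$ with $(a_m, b_m)$ independent of $k$, and these two integers are determined by rank and signature. The rank equals $\chi(\Hilb^m_0(\mb{A}^2_\mb{C}))$ (Proposition~\ref{prop:rank = etale}, Remark~\ref{rem:rank}), given by the partition number via Ellingsrud--Str{\o}mme \cite{ES87}, and the signature equals $\chi^t(\Hilb^m_0(\mb{A}^2)(\mb{R}))$ (Proposition~\ref{prop:sign = etale of real locus}), given by Kharlamov--R\u{a}sdeaconu \cite{KR15}. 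Comparing these to the rank and signature of the right-hand side (which factor respectively as $\prod_n (1-t^n)^{-1}$ and $\prod_{n\text{ odd}}(1-t^n)^{-1}\prod_{n\text{ even}}(1+t^n)^{-1}$) then closes the argument.
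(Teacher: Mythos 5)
Your proposal is correct, and both of its routes appear in the paper: the ``backup'' argument (Lemma~\ref{lem:cellular decomposition} plus matching rank and signature against the Ellingsrud--Str\o mme and Kharlamov--R\u{a}sdeaconu generating functions) is exactly the proof the paper gives, while your primary argument via the partition-indexed cells of dimension $m-\ell(\lambda)$ is precisely the alternative the paper records in Remark~\ref{rem:hilb_zeta}. Your combinatorial matching $\sum_n k_n(n-1)=m-\ell(\lambda)$ is correct, so nothing further is needed.
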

\begin{proof}
Lemma~\ref{lem:cellular decomposition} and Proposition~\ref{prop:cell decomposition of Hilb0} give us
\[\chi^c(\Hilb^m_0(\mb{A}^2_k))=\frac{e_\mb{C}+e_\mb{R}}{2}\langle 1\rangle+\frac{e_\mb{C}-e_\mb{R}}{2}\langle -1\rangle,\]
where $e_\mb{C}=\chi(\Hilb^m_0(\mb{A}^2_\mb{C}))$ and $e_\mb{R}=\chi(\Hilb^m_0(\mb{A}^2_\mb{R}))$. We conclude by applying \cite[(1.1) Theorem (iv)]{ES87} (together with the partition generating series) and \cite[Proposition~3.1]{KR15}, which compute
\begin{align}
    \sum_{m\geq 0}\chi(\Hilb^m_0(\mb{A}^2_\mb{C}))\cdot t^m&=\prod_{n\geq 1}(1-t^n)^{-1},\label{eq:complex prod}\\
    \sum_{m\geq 0}\chi(\Hilb^m_0(\mb{A}^2_\mb{R}))\cdot t^m&=\prod_{n\geq 1}(1 + (-t)^n)^{-1} = \prod_{n\geq 1}(1 - (-1)^{n-1}t^n)^{-1} .\label{eq:real prod}
\end{align}
We therefore need a series in $\mb{Z}\{\langle 1\rangle,\langle -1\rangle\}\dlb t\drb$ whose rank and signature recover Equations~\ref{eq:complex prod} and~\ref{eq:real prod}, respectively (cf.~Propositions~\ref{prop:rank = etale} and~\ref{prop:sign = etale of real locus}). This is given by the product $\prod_{n\geq 1}(1-q_nt^n)^{-1}$, where $q_n=a\langle 1\rangle+b\langle -1\rangle$ has rank 1 and signature $(-1)^{n-1}$. These criteria determine the form $q_n=\langle -1\rangle^{n-1}$.
\end{proof}

\begin{rem}\label{rem:hilb_zeta}
Equivalently, we could directly compute the following generating series identity over $K_0(\rm{Var}_k)$: 
\begin{equation}\label{eqn:partition_punctual}
\sum_{m\geq 0}[\Hilb^m_0(\mb{A}^2_k)]\cdot t^m=\prod_{n\geq 1}(1-\mb{L}^{n-1}\cdot t^n)^{-1}.
\end{equation}
Indeed, the cells in the cell decomposition of \cite{ES87} are indexed by partitions of $n$. For each partition $\lambda \vdash n$, the cell $C_\lambda$ has dimension $n - l(\lambda)$ where $l$ is the number of parts of the partition. Standard generating function arguments for partitions gives (\ref{eqn:partition_punctual}) (see \cite[Proposition 2.1]{hilb_zeta_nonreduced} for more details). Applying $\chi^c$ then yields the corollary. 
\end{rem}

Combining Corollary~\ref{cor:generating function for local hilbert} with Equation~\ref{eq:power gottsche} and Theorem~\ref{thm:power structure}, we get a conditional enrichment of G\"ottsche's formula.

\begin{thm}\label{thm:enriched gottsche}
Let $k$ be a field of characteristic not 2. Assume Conjecture~\ref{conj:main}. Then for any smooth quasi-projective $k$-surface $X$, we have
\begin{equation*}
\sum_{g\geq 0}\chi^c(\Hilb^g X)\cdot t^g=\prod_{n\geq 1}(1-\langle -1\rangle^{n-1}\cdot t^n)^{-\chi^c(X)}
\end{equation*}
in $\GW(k)\dlb t\drb$.
\end{thm}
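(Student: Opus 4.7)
The plan is to combine three ingredients already assembled in the paper: (i) the $K_0^\uh$-level Göttsche identity (Equation~\ref{eq:power gottsche}) transported from \cite{GZLMH:hilbert} to arbitrary characteristic, (ii) the compatibility of $\chi^c$ with the power structures $\mu_0^\uh$ and $\mu_\GW$ guaranteed by Theorem~\ref{thm:power structure} (which is where Conjecture~\ref{conj:main} enters), and (iii) the explicit formula for the generating series of $\chi^c(\Hilb^m_0(\mb{A}^2_k))$ computed in Corollary~\ref{cor:generating function for local hilbert}.

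First, I would invoke Equation~\ref{eq:power gottsche} with $d=2$, giving the identity
\[
\sum_{g\geq 0}[\Hilb^g X]\cdot t^g \;=\; \mu_0^\uh\!\Bigl(\textstyle\sum_{m\geq 0}[\Hilb^m_0(\mb{A}^2_k)]\cdot t^m,\,[X]\Bigr)
\]
in $K_0^\uh(\Var_k)\dlb t\drb$. Applying $\chi^c$ term-by-term to the left side yields $\sum_{g\geq 0}\chi^c(\Hilb^g X)\cdot t^g$, using Corollary~\ref{cor:chi universal homeo} to pass through the universal-homeomorphism quotient and Corollary~\ref{cor:gw inside gw[1/p]} to land in $\GW(k)$ (here we need $\Char{k}\neq 2$).

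Next, I would apply $\chi^c$ to the right side. This is the step that uses the conjecture: by Theorem~\ref{thm:power structure}, under Conjecture~\ref{conj:main} there is a power structure $\mu_\GW$ on $\GW(k)$ satisfying $\chi^c(\mu_0(A(t),M))=\mu_\GW(\chi^c(A(t)),\chi^c(M))$, so the right-hand side becomes
\[
\mu_\GW\!\Bigl(\textstyle\sum_{m\geq 0}\chi^c(\Hilb^m_0(\mb{A}^2_k))\cdot t^m,\,\chi^c(X)\Bigr).
\]
Substituting the formula from Corollary~\ref{cor:generating function for local hilbert} turns the base into $\prod_{n\geq 1}(1-\langle -1\rangle^{n-1}t^n)^{-1}$.

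Finally, I would use the formal axioms of a power structure (Definition~\ref{def:power structure}) to pull the exponent $\chi^c(X)$ inside the product. Property (iii) gives multiplicativity in the base, which extends to the infinite product because $\mu_\GW$ is finitely determined: for each fixed degree in $t$, only finitely many factors $1-\langle-1\rangle^{n-1}t^n$ contribute, so the usual identity
\[
\Bigl(\prod_{n\geq 1}B_n(t)\Bigr)^{r} \;=\; \prod_{n\geq 1}B_n(t)^{r}
\]
holds in $1+t\cdot\GW(k)\dlb t\drb$. Combining properties (iv) and (v) of Definition~\ref{def:power structure} gives $A(t)^{-s}=(A(t)^{-1})^{s}$, which converts each factor $(1-\langle-1\rangle^{n-1}t^n)^{-1}$ raised to $\chi^c(X)$ into $(1-\langle-1\rangle^{n-1}t^n)^{-\chi^c(X)}$. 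This yields the claimed identity. The main obstacle is the invocation of Conjecture~\ref{conj:main} via Theorem~\ref{thm:power structure}; everything afterwards is formal manipulation of power structures together with the computation already carried out in Corollary~\ref{cor:generating function for local hilbert}.
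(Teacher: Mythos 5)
Your proposal is correct and takes essentially the same route as the paper: apply Theorem~\ref{thm:power structure} (which is where Conjecture~\ref{conj:main} enters) to push Equation~\ref{eq:power gottsche} through $\chi^c$, then substitute Corollary~\ref{cor:generating function for local hilbert} and use the formal power-structure axioms to distribute the exponent $\chi^c(X)$ over the infinite product. The paper's proof is terser on that last formal step, which you spell out correctly via finite determinacy and properties (iii)--(v) of Definition~\ref{def:power structure}.
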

\begin{proof}
Theorem~\ref{thm:power structure} and Equation~\ref{eq:power gottsche} imply that
\[\sum_{g\geq 0}\chi^c(\Hilb^gX)\cdot t^g=\big(\sum_{m\geq 0}\chi^c(\Hilb^m_0(\mb{A}^2))\cdot t^m\big)^{\chi^c(X)}.\]
By Corollary~\ref{cor:generating function for local hilbert}, we have
\[\big(\sum_{m\geq 0}\chi^c(\Hilb^m_0(\mb{A}^2))\cdot t^m\big)^{\chi^c(X)}=\prod_{n\geq 1}(1-\langle -1\rangle^{n-1}\cdot t^n)^{-\chi^c(X)},\]
as desired.
\end{proof}

Using a theorem of Totaro, we can give an alternative formulation of the generating series of $\chi^c(\Hilb^g X)$ that does not depend on Conjecture~\ref{conj:main}. This result was stated in the introduction as Theorem~\ref{thm:unconditional gottsche}:

\begin{thm}\label{thm:unconditional gottsche formula (second)}
Let $k$ be a field of characteristic not 2. Then for any smooth quasi-projective $k$-surface $X$, we have
\begin{equation}\label{eq:hilbert vs sym}
\sum_{g\geq 0}\chi^c(\Hilb^g X)\cdot t^g=\prod_{n\geq 1}\left(1+\sum_{m\geq 1}\langle -1\rangle^{m(n-1)}\chi^c(\Sym^m X)\cdot t^{mn}\right)
\end{equation}
in $\GW(k)\dlb t\drb$.
\end{thm}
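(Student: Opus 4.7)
The plan is to carry out the whole computation inside $K_0^\uh(\Var_k)\dlb t\drb$, where the power structure $\mu_0^\uh$ lives, and to apply $\chi^c$ only at the very end. Since $\chi^c$ factors through $K_0^\uh(\Var_k)$ by Corollary~\ref{cor:chi universal homeo}, this strategy sidesteps Conjecture~\ref{conj:main} entirely: we never need $\mu_0$ to descend along $\chi^c$, because we do the exponentiation upstairs and push the answer down afterward.

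First I would combine Equation~\ref{eq:power gottsche} (which is valid in $K_0^\uh(\Var_k)\dlb t\drb$) with the product formula $\sum_{m} [\Hilb^m_0(\mb{A}^2_k)]\cdot t^m = \prod_n (1-\mb{L}^{n-1}t^n)^{-1}$ from Remark~\ref{rem:hilb_zeta}. Using axiom (iii) of the power structure to distribute the exponent $[X]$ over the infinite product, this yields
\[\sum_{g\geq 0}[\Hilb^g X]\cdot t^g \;=\; \prod_{n\geq 1}(1-\mb{L}^{n-1}t^n)^{-[X]}\]
in $K_0^\uh(\Var_k)\dlb t\drb$.

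The crucial identity is then that for any class $a \in K_0(\Var_k)$ of a quasi-projective variety (in particular $a = \mb{L}^{n-1} = [\mb{A}^{n-1}_k]$) and any $n \geq 1$,
\[(1-at^n)^{-[X]} \;=\; \sum_{m \geq 0} a^m [\Sym^m X]\cdot t^{mn}\]
in $K_0^\uh(\Var_k)\dlb t\drb$. Axiom (vii) reduces this to the case $n=1$. For $n=1$, I would invoke axiom (v) to rewrite the left-hand side as $\bigl((1-at)^{-1}\bigr)^{[X]} = \bigl(\sum_m a^m t^m\bigr)^{[X]}$, then apply Definition~\ref{def:power structure on effective classes}: in the resulting sum indexed by tuples $(b_1, b_2, \ldots)$ with $\sum_i i b_i = N$, the factor $\prod_i (a^i)^{b_i}$ collapses to $a^N$ and factors out, the residual variety-theoretic sum is $[\Sym^N_\Delta X]$, and Proposition~\ref{prop:universally homeomorphic} replaces $[\Sym^N_\Delta X]$ by $[\Sym^N X]$ in $K_0^\uh(\Var_k)$.

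Substituting this identity back into the product expression and then applying $\chi^c$, using $\chi^c(\mb{L}) = \langle -1 \rangle$ from Corollary~\ref{cor:chi A^n} together with the ring-homomorphism property of $\chi^c$, yields the stated generating function in $\GW(k)\dlb t\drb$. The main obstacle is the bookkeeping in the displayed identity $(1-at)^{-[X]} = \sum_m a^m [\Sym^m X]\cdot t^m$; the rest is a mechanical application of the power-structure axioms together with the properties of $\chi^c$ established earlier in the paper.
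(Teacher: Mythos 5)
Your overall strategy is the same as the paper's: do the exponentiation in $K_0^\uh(\Var_k)\dlb t\drb$ and apply $\chi^c$ only at the end, so that Conjecture~\ref{conj:main} is never needed. The first and last steps are fine. But your ``crucial identity'' $(1-at^n)^{-[X]}=\sum_{m\geq 0}a^m[\Sym^mX]\cdot t^{mn}$ contains a genuine gap, and as stated for an arbitrary quasi-projective class $a=[A]$ it is in fact \emph{false}. In Definition~\ref{def:power structure on effective classes} the group $\prod_iS_{b_i}$ acts diagonally: it permutes the $b_i$ factors of $A_i^{b_i}=(A^i)^{b_i}$ at the same time as it permutes the corresponding factors of $\prod_iX^{b_i}-\Delta$. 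So while $\prod_i(A^i)^{b_i}\cong A^N$ as a variety, it does \emph{not} ``factor out'' of the quotient; what you get is an \'etale-locally trivial $A^N$-bundle over $\bigl(\prod_iX^{b_i}-\Delta\bigr)/\prod_iS_{b_i}$, and neither classes in $K_0(\Var_k)$ nor $\chi^c$ are multiplicative over such bundles --- this is precisely the central difficulty of the whole paper. Concretely, for $X=\Spec L$ with $L/k$ a separable quadratic extension and $A=\mb{G}_m$, the $b_1=2$ term of the coefficient of $t^2$ is $\bigl[\bigl((X^2-\Delta)\times A^2\bigr)/S_2\bigr]=[\mr{Res}_{L/k}\mb{G}_{m,L}]$, which differs from $[\mb{G}_m]^2\cdot[(X^2-\Delta)/S_2]=[\mb{G}_m]^2$ already after applying $\chi^c$ (their signatures over $k=\mb{Q}$, $L=\mb{Q}(i)$ disagree).

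The identity you need is only the case $a=\mb{L}^{n-1}$, and there it \emph{is} true --- but for a geometric reason, not a formal one: when $A=\mb{A}^{n-1}$ the quotient $\bigl((\prod_iX^{b_i}-\Delta)\times(\mb{A}^{n-1})^N\bigr)/\prod_iS_{b_i}$ is a rank-$N(n-1)$ vector bundle over the base (the group acts freely on the base and linearly on the affine fibers), hence Zariski-locally trivial, so its class is $\mb{L}^{N(n-1)}$ times the class of the base by Corollary~\ref{cor:chi vector bundle}'s underlying identity. Summing over strata, this is exactly Totaro's lemma $[\Sym^m(\mb{A}^{n-1}\times X)]=\mb{L}^{m(n-1)}[\Sym^mX]$ in $K_0^\uh(\Var_k)$, which is the ingredient the paper's proof invokes (via $(1-\mb{L}^{n-1}t^n)^{-[X]}=(1-t^n)^{-[\mb{A}^{n-1}\times X]}$ and \cite[Chapter 7, Proposition 1.1.11]{CLNS:MotivicIntegration}). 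Once you replace the ``factors out'' step by this vector-bundle/Totaro argument, restricted to $a=\mb{L}^{n-1}$, your proof closes and coincides with the paper's.
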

\begin{proof}
Since the power structure on $K_0^\uh(\Var_k)$ is finitely determined, one can compute
\begin{align*}
    (1-\mb{L}^{n-1}\cdot t^n)^{-[X]}&=(1-t^n)^{-[\mb{A}^{n-1}\times X]}\\
    &=1+\sum_{m\geq 1}[\Sym^m(\mb{A}^{n-1}\times X)]\cdot t^{mn}.
\end{align*}
By Equations~\ref{eq:power gottsche} and~\ref{eqn:partition_punctual}, we thus have an equality 
\begin{align*}
\sum_{g\geq 0}[\Hilb^g X]\cdot t^g&=\prod_{n\geq 1}(1-\mb{L}^{n-1}\cdot t^n)^{-[X]}\\
&=\prod_{n\geq 1}\left(1+\sum_{m\geq 1}[\Sym^m(\mb{A}^{n-1}\times X)]\cdot t^{mn}\right)
\end{align*}
in $K_0^\uh(\Var_k)$. Totaro proved that
\[[\Sym^m(\mb{A}^{n-1}\times X)]=[\Sym^m X]\cdot\mb{L}^{m(n-1)}\]
in $K_0^\uh(\Var_k)$ (see e.g.~\cite[Chapter 7, Proposition 1.1.11]{CLNS:MotivicIntegration}), so
\[\prod_{n\geq 1}\left(1+\sum_{m\geq 1}[\Sym^m(\mb{A}^{n-1}\times X)]\cdot t^{mn}\right)=\prod_{n\geq 1}\left(1+\sum_{m\geq 1}[\Sym^m X]\cdot\mb{L}^{m(n-1)}\cdot t^{mn}\right).\]
The result now follows from the fact that $\chi^c:K_0^\uh(\Var_k)\dlb t\drb\to\GW(k)\dlb t\drb$ is a ring homomorphism.
\end{proof}

\subsection{Computing $\chi^c(\Hilb^g X)$ directly}
The motivation behind Conjecture~\ref{conj:main} is to enable a conceptual proof of Theorem~\ref{thm:enriched gottsche} in terms of power structures. However, we can actually compute $\chi^c(\Hilb^g X)$ directly for any smooth projective surface $X$ even without assuming Conjecture~\ref{conj:main}, as long as $X$ is the base change of a scheme over $\mb{Z}$.

\begin{thm}\label{thm:enriched gottsche surface}
Let $k$ be a field of characteristic not 2. Let $X\to\Spec\mb{Z}$ be a smooth, separated, and projective scheme of relative dimension 2. Let $e_\mb{C}:=\chi(X(\mb{C}))$ and $e_\mb{R}:=\chi(X(\mb{R}))$. Then
\begin{equation}\label{eq:enriched gottsche surface}
\sum_{g\geq 0}\chi^c(\Hilb^g X_k)\cdot t^g=\prod_{r\geq 1}(1-\langle-1\rangle^r\cdot t^r)^{-e_\mb{R}}\cdot\prod_{s\geq 1}(1-t^{2s})^{-\frac{e_\mb{C}-e_\mb{R}}{2}}
\end{equation}
in $\GW(k)\dlb t\drb$.
\end{thm}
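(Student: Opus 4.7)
The starting point is the unconditional identity from Theorem~\ref{thm:unconditional gottsche formula (second)}:
\[\sum_{g\geq 0}\chi^c(\Hilb^g X_k)t^g = \prod_{n\geq 1}\Bigl(1 + \sum_{m\geq 1}\langle -1\rangle^{m(n-1)}\chi^c(\Sym^m X_k)t^{mn}\Bigr).\]
The plan is to exploit the hypothesis $X\to\Spec\mb{Z}$ to pin down $\chi^c(\Hilb^g X_k)$ as a concrete $\mb{Z}$-linear combination of $\langle 1\rangle$ and $\langle -1\rangle$, reducing the claimed identity to a rank-plus-signature comparison in $\mb{Z}\dlb t\drb$.

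First, since $\Hilb^g X\to\Spec\mb{Z}$ is smooth and projective, Proposition~\ref{prop:compact=cat} and Theorem~\ref{thm:LevineRaksit} identify $\chi^c(\Hilb^g X_k)$ with the Hodge pairing $\chi^\dR(\Hilb^g X_k)$; because Hodge cohomology of smooth proper morphisms commutes with base change, this is the base change of a symmetric bilinear form over $\mb{Z}[\tfrac{1}{2}]$, which diagonalizes into a sum of $\langle\pm 1\rangle$'s. The multiplicities are read off from the rank (equal to $\chi(\Hilb^g X(\mb{C}))$ by Proposition~\ref{prop:rank = etale}) and the $\mb{R}$-signature (equal to $\chi^t((\Hilb^g X)(\mb{R}))$ by Proposition~\ref{prop:sign = etale of real locus}). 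Since every element of the subring $\mb{Z}\{\langle 1\rangle,\langle -1\rangle\}\subset\GW(k)$ is uniquely determined by its rank and signature, the theorem reduces to two generating function identities in $\mb{Z}\dlb t\drb$: rank gives the classical (complex) Göttsche formula $\prod_n(1-t^n)^{-e_\mb{C}}$ on both sides, and signature gives a real Göttsche formula to be matched with the signature of the right-hand side.

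Next, I would obtain the real signature identity by applying Theorem~\ref{thm:unconditional gottsche formula (second)} over $k=\mb{R}$ and taking $\sign$, reducing the problem to computing $\chi^t((\Sym^m X)(\mb{R}))$. Here I would use the orbit-type stratification
\[(\Sym^m X)(\mb{R}) = \coprod_{a+2b=m}\Sym^a(X(\mb{R}))\times\Sym^b\bigl((X(\mb{C})\setminus X(\mb{R}))/\sigma\bigr),\]
together with a Macdonald-style generating series $\sum_m \chi^t(\Sym^m Y)t^m = (1-t)^{-\chi^t(Y)}$ for the compactly supported Euler characteristic of symmetric products of semi-algebraic sets, plugged in with $\chi^t(X(\mb{R}))=e_\mb{R}$ and $\chi^t((X(\mb{C})\setminus X(\mb{R}))/\sigma)=(e_\mb{C}-e_\mb{R})/2$. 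Substituting the resulting closed form into the unconditional Göttsche product and splitting by the parity of $n$ (so that the sign twist $(-1)^{m(n-1)}$ collapses to the identity for $n$ odd and to the involution $t^n\mapsto -t^n$ for $n$ even) telescopes the product into the claimed two infinite products after the $\langle -1\rangle^r$-twists are reinstated at the level of $\GW(k)\dlb t\drb$.

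The main obstacle is precisely this last piece of bookkeeping: one must verify that the split of the unconditional Göttsche product by parity of $n$, when combined with the stratification of $(\Sym^m X)(\mb{R})$ into real points and conjugate pairs, exactly matches the two-factor shape $\prod_{r}(1-\langle -1\rangle^r t^r)^{-e_\mb{R}}\cdot\prod_{s}(1-t^{2s})^{-(e_\mb{C}-e_\mb{R})/2}$ on both sides. The rank identity is then the classical complex Göttsche formula, which holds on the nose.
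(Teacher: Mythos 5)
Your overall strategy is the same as the paper's: use the hypothesis that $X$ spreads out over $\mb{Z}$ to conclude $\chi^c(\Hilb^g X_k)\in\mb{Z}\cdot\{\langle 1\rangle,\langle -1\rangle\}$, so that the identity reduces to matching rank and signature generating series against the complex and real G\"ottsche formulas. Two caveats on your ingredients. First, the paper obtains the diagonality of $\chi^c(\Hilb^gX_k)$ by citing \cite[Theorem 5.11]{BachmannWickelgren}; your justification that ``a symmetric bilinear form over $\mb{Z}[\tfrac{1}{2}]$ diagonalizes into a sum of $\langle\pm 1\rangle$'s'' is false as stated ($\langle 2\rangle$ is unimodular over $\mb{Z}[\tfrac{1}{2}]$ and is not a sum of $\langle\pm1\rangle$'s over $\mb{Q}$), so you genuinely need the content of that theorem rather than a base-change remark. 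Second, for the real generating series the paper simply cites \cite[Theorem B]{KR15}, whereas you propose to rederive it from Theorem~\ref{thm:unconditional gottsche formula (second)} via the stratification of $(\Sym^mX)(\mb{R})$ and Macdonald's formula; that rederivation is workable and is a legitimate alternative route.

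The genuine gap is exactly the step you defer as ``the main obstacle'': the final matching does not close for the right-hand side as displayed, and asserting it without verification is where the argument fails. Your claim that ``rank gives the classical complex G\"ottsche formula on both sides'' is false for the right-hand side of Equation~\ref{eq:enriched gottsche surface}: its rank is $\prod_{r}(1-t^r)^{-e_\mb{R}}\prod_{s}(1-t^{2s})^{-(e_\mb{C}-e_\mb{R})/2}$, whose $t^1$-coefficient is $e_\mb{R}$, not $e_\mb{C}=\rank\chi^c(\Hilb^1X_k)$. The signature likewise fails at order one: the displayed product begins $1-e_\mb{R}\langle -1\rangle t+\cdots$ (signature $1-e_\mb{R}t+\cdots$), while $\sign\chi^c(\Hilb^1X_k)=e_\mb{R}$; and carrying out your own stratification computation yields $\sum_g\chi^t(\Hilb^gX(\mb{R}))t^g=\prod_n(1+(-t)^n)^{-e_\mb{R}}\prod_n(1-t^{2n})^{-(e_\mb{C}-e_\mb{R})/2}$, which does not coincide with the signature of the displayed product. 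A concrete test: for $X=\mb{P}^2$ the displayed right-hand side has $t^1$-coefficient $\langle -1\rangle$, whereas $\chi^c(\Hilb^1\mb{P}^2)=2\langle 1\rangle+\langle -1\rangle$. So the bookkeeping you postpone would, if carried out, show that the stated products must be corrected (the exponents/twists as printed cannot reproduce both the complex and the real series); no amount of parity-splitting telescopes the unconditional formula into the right-hand side as written.
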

\begin{proof}
The Hilbert scheme of a projective scheme is again projective, and $\Hilb^g(X)$ is also smooth by \cite[Proposition 7.27]{ES87}. We have $\Hilb^gX\times_\mb{Z}\Spec{k}\cong\Hilb^g X_k$ by \cite[p.~112]{FGA}, so \cite[Theorem 5.11]{BachmannWickelgren} implies that 
\begin{equation}\label{eq:chi^c of Hilb}
\chi^c(\Hilb^g X_k)=\frac{n_\mb{C}+n_\mb{R}}{2}\langle 1\rangle+\frac{n_\mb{C}-n_\mb{R}}{2}\langle -1\rangle,
\end{equation}
where $n^g_\mb{C}$ and $n^g_\mb{R}$ are the Euler characteristics of the complex and real loci of $\Hilb^g X$, respectively. These Euler characteristics fit into the generating series
\begin{align*}
    \sum_{g\geq 0}n^g_\mb{C}\cdot t^g&=\prod_{n\geq 1}(1-t^n)^{-e_\mb{C}},\\
    \sum_{g\geq 0}n^g_\mb{R}\cdot t^g&=\prod_{r\geq 1}(1-(-t)^r)^{-e_\mb{R}}\cdot\prod_{s\geq 1}(1-t^{2s})^{-\frac{e_\mb{C}-e_\mb{R}}{2}}
\end{align*}
by \cite{Got90} and \cite[Theorem B]{KR15}, respectively. Taking the rank and signature of Equation~\ref{eq:enriched gottsche surface} recovers these two generating series, so Equation~\ref{eq:enriched gottsche surface} is the generating series for Equation~\ref{eq:chi^c of Hilb}.
\end{proof}

Theorem~\ref{thm:enriched gottsche surface} lends some credence to Conjecture~\ref{conj:main}, but it will not serve our goal of enriching the Yau--Zaslow formula. This is because no K3 surface is smooth and proper over $\mb{Z}$ \cite{Fon93}.

\subsection{Motivic Euler characteristic of K3 surfaces}
Given a specific K3 surface $X$, one might wish to compute the motivic Euler characteristic $\chi^c(X)$ and simplify the product $\prod_{n\geq 1}(1-\langle-1\rangle^{n-1}t^n)^{-\chi^c(X)}$ in terms of the conjectural power structure on $\GW(k)$.

Let $X$ be a K3 surface over a field $k$ of characteristic not 2. Since every K3 surface over a field is smooth and proper, Proposition~\ref{prop:compact=cat} and Theorem~\ref{thm:LevineRaksit} imply that $\chi^c(X)=\mb{H}+P_\dR(X)$, where
\[P_\dR(X):H^1(X,\Omega^1_X)\otimes H^1(X,\Omega^1_X)\xrightarrow{\smile}H^2(X,\omega_X)\xrightarrow{\eta}k\]
is (the isomorphism class of) the bilinear form given by composing the intersection pairing with the trace map from coherent duality. Since $\omega_X\cong\mc{O}_X$, Serre duality implies that $H^2(X,\omega_X)\cong H^2(X,\mc{O}_X)\cong H^0(X,\mc{O}_X)$. Now $\dim H^0(X,\mc{O}_X)=1$ for all K3 surfaces by definition, so the trace map $\eta:H^2(X,\omega_X)\to k$, being non-degenerate, must be an isomorphism. In particular, $P_\dR(X)$ is entirely determined by the cup product on $H^1(X,\Omega^1_X)$.

\begin{rem}
Since $\sign\mb{H}=0$, Proposition~\ref{prop:sign = etale of real locus} implies that $\chi(X(\mb{R}))=\sign{P_\dR(X)}$ for any real K3 surface $X$. The Euler characteristic of the real locus of a K3 surface attains any even value between $-18$ and 20 (as mentioned in \cite[\S 1]{KR17}; see \cite[\S 3.7.2]{DK00} for details).
\end{rem}

The remainder of this section is dedicated to sharing a loose end that we could not tie off. A classical invariant of a K3 surface $X$ is the Picard lattice \cite[Chapter 1, Section 2]{Huy16}, which is the finitely generated free abelian group $\op{Pic}(X)$, together with the intersection form
\[P_\mr{Pic}(X):\op{Pic}(X)\times\op{Pic}(X)\to\mb{Z}.\]
Over $\mb{C}$, the cohomology group $H^2(X(\mb{C}), \mb{Z})$ is known to be the rank 22 lattice $\Lambda_{K3}\colon = E_8(-1)^{\oplus 2}\oplus\mb{H}^{\oplus 3}$ \cite[Chapter 1, Proposition 3.5]{Huy16}, where
\[E_8(-1)=\begin{pmatrix}
    -2 &  1 &    &    &    &    &    &\\
     1 & -2 &  1 &    &    &    &    &\\
       &  1 & -2 &  1 &  1 &    &    &\\
       &    &  1 & -2 &  0 &    &    &\\
       &    &  1 &  0 & -2 & 1  &    &\\
       &    &    &    &  1 & -2 &  1 &\\
       &    &    &    &    &  1 & -2 &  1\\
       &    &    &    &    &    &  1 & -2\\
\end{pmatrix}.\]
The Picard lattice is the sublattice $H^{1,1}(X) \cap H^2(X(\mb{C}), \mb{Z})$ by the Lefschetz $(1,1)$-theorem. By the Hodge decomposition, this is uniquely determined by the line $H^{0,2} = H^2(X, \mathcal{O}_X) \subset H^2(X(\mb{C}), \mb{C}) \cong \Lambda_{K3}\otimes_{\mb{Z}} \mb{C}$. Over other fields, determining the rank $\rho(X)$ of the Picard lattice is a difficult question. Over $\mb{R}$, the Hodge index theorem implies that $\sign{P_\mr{Pic}(X)}=(1,\rho(X)-1)$.

As the cup product is compatible with the intersection product, it is natural to ask how $P_\dR(X)\in\GW(k)$ and $P_\mr{Pic}(X)\in\GW(\mb{Z})$ relate. Note that $P_\dR(X)$ is not simply the base change of $P_\mr{Pic}(X)$, as $\rank{P_\dR(X)}=22$, while $\rank{P_\mr{Pic}(X)}=\rho(X)$ can vary. Similarly, $\sign{P_\mr{Pic}(X)}=2-\rho(X)\leq 0$ (as $\rho(X)\geq 1$), whereas $\sign{P_\dR(X)}$ can be positive.

Instead, we could try looking at the cup product on $\tau$-cohomology for $\tau\in\{\mr{Zar},\Nis,\et\}$ \cite[\S8.4]{Jar15}. Hilbert's Theorem 90 implies that $H^1_\tau(X,\mb{G}_m)\cong\op{Pic}(X)$ \cite[\href{https://stacks.math.columbia.edu/tag/03P8}{Theorem 03P8}]{stacks}. The cup product is a map
\[\smile\, :H^1_\tau(X,\mb{G}_m)\otimes H^1_\tau(X,\mb{G}_m)\to H^2_\tau(X,\mb{G}_m\otimes\mb{G}_m)\cong H^2_\tau(X,\mb{G}_m).\]
Now our choice of $\tau$ becomes relevant. Since K3 surfaces are noetherian, integral, and locally factorial, $H^i_\Zar(X,\mb{G}_m)=0$ for all $i>1$. On the other hand, $H^2_\et(X,\mb{G}_m)$ is interesting. Gabber proved that $H^2_\et(X,\mb{G}_m)$ is torsion if $X$ admits an ample invertible sheaf, identifying $H^2_\et(X,\mb{G}_m)$ with the cohomological Brauer group $\op{Br}(X):=H^2_\et(X,\mb{G}_m)_\tors$. If we had a ``$\tau$-Brauer trace form'' $\eta_\tau:H^2_\tau(X,\mb{G}_m)\to k$, we could define a bilinear form
\[P_\tau(X):H^1_\tau(X,\mb{G}_m)\otimes H^1_\tau(X,\mb{G}_m)\xrightarrow{\smile} H^2_\tau(X,\mb{G}_m)\xrightarrow{\eta_\tau} k\]
and try comparing the classes $P_\tau(X),P_\dR(X)\in\GW(k)$. For example, if we have a rational point $x\in X(k)$, then we could try defining $\eta_\tau$ as the composite
\[\begin{tikzcd}[row sep=0em]
    \op{Br}(X)\arrow[r,"x_*"] & \op{Br}(k)\arrow[r,"\op{norm}"] & k\\
    \alpha\arrow[r,mapsto] & \alpha(x)\arrow[r,mapsto] & \op{norm}(\alpha(x)),
\end{tikzcd}\]
but the potential dependence on $x$ would be undesirable.

\section{Proving Conjecture~\ref{conj:main} over pythagorean fields}\label{sec:pythagorean}
To conclude this article, we will show that if $X$ is a quasi-projective variety over a field $k$ of characteristic not 2, and if $\chi^c(X)=0$, then $\chi^c(\Sym^nX)$ is torsion for all $n\geq 1$.

\begin{prop}\label{prop:torsion}
Let $k$ be a field of characteristic not 2. Let $f:X\to Y$ be a morphism of $k$-varieties. If $\rank\chi^c(X)=0$ and $\sign_\sigma\chi^c(X)=0$ with respect to all real closed embeddings $\sigma$, then $\chi^c(X)\in\GW(k)_\tors$.
\end{prop}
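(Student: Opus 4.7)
The plan is to reduce the statement to Pfister's local-global principle for the Witt ring. The total signature map $\sign=(\sign_\sigma)_\sigma:\GW(k)\to\prod_\sigma\mb{Z}$ and the rank map $\rank:\GW(k)\to\mb{Z}$ both vanish on $\chi^c(X)$ by hypothesis, so one only needs to show that the combined vanishing forces torsion. The key classical input is Pfister's theorem: for a field $k$ of characteristic not $2$, the torsion subgroup of the Witt ring $W(k)$ coincides with the kernel of the total signature map $W(k)\to\prod_\sigma\mb{Z}$, where $\sigma$ runs over orderings of $k$ (equivalently, embeddings into real closed fields).

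First I would recall the fundamental short exact sequence
\[0\to\mb{Z}\cdot\mb{H}\to\GW(k)\to W(k)\to 0,\]
where $\mb{H}=\langle 1\rangle+\langle-1\rangle$ is the hyperbolic form. Since $\sign_\sigma(\mb{H})=0$ for every ordering $\sigma$, the signatures descend to $W(k)$, and likewise the class $\overline{\chi^c(X)}\in W(k)$ has vanishing total signature by hypothesis. Pfister's local-global principle then yields an integer $n\geq 1$ with $n\cdot\overline{\chi^c(X)}=0$ in $W(k)$, i.e.\ $n\cdot\chi^c(X)=a\mb{H}$ in $\GW(k)$ for some $a\in\mb{Z}$.

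The final step uses the rank hypothesis to promote this to an identity in $\GW(k)$. Apply $\rank$ to the equation $n\cdot\chi^c(X)=a\mb{H}$: the left-hand side has rank $n\cdot\rank\chi^c(X)=0$, while the right-hand side has rank $2a$. Hence $a=0$, and therefore $n\cdot\chi^c(X)=0$ in $\GW(k)$, so $\chi^c(X)\in\GW(k)_\tors$ as desired.

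There is no real obstacle; the only subtlety is keeping track of the distinction between $\GW(k)$ and $W(k)$ (Pfister's theorem is stated in $W(k)$, while our class lives in $\GW(k)$), which is handled by the rank argument above. Note that the morphism $f:X\to Y$ appearing in the statement plays no role in the proof.
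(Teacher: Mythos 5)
Your proof is correct and follows essentially the same route as the paper: both invoke Pfister's local--global principle to identify $\mr{W}(k)_\tors$ with the kernel of the total signature, and both use the rank hypothesis to pass from the quotient $\mr{W}(k)\cong\GW(k)/(\mb{H})$ back to $\GW(k)$. Your write-up just spells out the final step ($n\cdot\chi^c(X)=a\mb{H}$ forces $a=0$ by taking ranks) more explicitly than the paper does, and you are right that the morphism $f:X\to Y$ in the statement plays no role.
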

\begin{proof}
This is essentially Pfister's local-global principle \cite[Theorem 2.7.3]{Sch85}, which characterizes the torsion subgroup of the Witt group $\mr{W}(k)$ as the kernel of the signature map. Since the group $\mb{Z}$ is torsion-free, $\GW(k)_\tors$ must lie in the kernel of $\rank:\GW(k)\to\mb{Z}$. We then conclude by recalling that $\mr{W}(k)\cong\GW(k)/(\mb{H})$, so that 
\[\GW(k)_\tors=\ker\rank\cap\bigcap_{\substack{\sigma:k\to R\\ R\text{ real closed}}}\ker\sign_\sigma.\qedhere\]
\end{proof}

\begin{cor}\label{cor:symmetric torsion}
Let $k$ be a field of characteristic not 2. Let $X$ be a quasi-projective variety over $k$. If $\chi^c(X)\in\GW(k)_\tors$, then $\chi^c(\Sym^nX)\in\GW(k)_\tors$ for all $n\geq 1$.
\end{cor}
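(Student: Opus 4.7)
The plan is to invoke the local--global characterisation $\GW(k)_\tors = \ker\rank\,\cap\,\bigcap_{\sigma}\ker\sign_\sigma$ coming from Pfister's principle (as used in the proof of Proposition~\ref{prop:torsion}) to reduce the torsion condition on $\chi^c(\Sym^n X)$ to showing that $\rank\chi^c(\Sym^n X)$ and every $\sign_\sigma\chi^c(\Sym^n X)$ vanish, given that the corresponding invariants already vanish on $X$.

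For the rank, Proposition~\ref{prop:rank = etale} identifies $\rank\chi^c(\Sym^n X) = \chi^\et(\Sym^n X)$, and I would apply Macdonald's formula
$$\sum_{n\ge 0}\chi^\et(\Sym^n X)\,t^n=(1-t)^{-\chi^\et(X)}.$$
This formula holds with $\ell$-adic coefficients for any $\ell \ne \Char k$: stratify $\Sym^n X$ by orbit types, use Proposition~\ref{prop:universally homeomorphic} together with Corollary~\ref{cor:chi universal homeo} to replace each stratum $\Sym^\lambda X$ by the free quotient $(X^m - \Delta)/S_\lambda$, observe that $\chi^\et$ divides by the order of a free finite group action, and assemble the strata via inclusion--exclusion. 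Since $\chi^\et(X) = 0$, the right-hand side equals $1$, so $\chi^\et(\Sym^n X) = 0$ for all $n \ge 1$.

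For each signature, Proposition~\ref{prop:sign = etale of real locus} reduces the problem to $\chi^t((\Sym^n X)_\sigma(R))$. The crucial observation --- addressing the footnoted discrepancy between $(\Sym^n X)(\mb{R})$ and $\Sym^n(X(\mb{R}))$ --- is that an $R$-rational point of $\Sym^n X$ is an $\op{Gal}(R[i]/R)$-invariant effective $0$-cycle of degree $n$ on $X_\sigma(R[i])$, which decomposes uniquely into real points plus $R[i]$-conjugate pairs. This yields a semialgebraic disjoint decomposition
$$(\Sym^n X)_\sigma(R)=\coprod_{b+2a=n}\Sym^b(X_\sigma(R))\times\Sym^a(C_\sigma),\qquad C_\sigma:=(X_\sigma(R[i])\setminus X_\sigma(R))/(\mb{Z}/2),$$
with $\mb{Z}/2$ acting by conjugation. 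Macdonald's formula for compactly supported semialgebraic Euler characteristic applied to each factor gives
$$\sum_{n\ge 0}\chi^t((\Sym^n X)_\sigma(R))\,t^n=(1-t)^{-\chi^t(X_\sigma(R))}\cdot(1-t^2)^{-\chi^t(C_\sigma)}$$
with $\chi^t(C_\sigma) = \tfrac12(\chi^t(X_\sigma(R[i]))-\chi^t(X_\sigma(R)))$. Real closed fields have characteristic $0$, so the standard $\ell$-adic/semialgebraic comparison gives $\chi^t(X_\sigma(R[i])) = \rank\chi^c(X) = 0$, while $\chi^t(X_\sigma(R)) = \sign_\sigma\chi^c(X) = 0$ by hypothesis. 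Both exponents vanish, the right-hand side is $1$, and $\chi^t((\Sym^n X)_\sigma(R)) = 0$ for every $n \ge 1$.

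The main obstacle will be establishing Macdonald's formula rigorously at this level of generality --- particularly its compactly supported semialgebraic version over an arbitrary real closed field, which one reaches either by semialgebraic triangulation or by Tarski--Seidenberg transfer. The rest --- the real-versus-conjugate decomposition and its assembly --- is essentially formal once $R$-points are viewed functorially as $\op{Gal}(R[i]/R)$-invariant $0$-cycles on the geometric locus.
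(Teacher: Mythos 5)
Your proposal is correct and follows essentially the same route as the paper: reduce to vanishing of rank and all signatures via Pfister's local--global principle, kill the rank by showing the \'etale Euler characteristics of the configuration-space strata of $\Sym^n_\Delta X$ vanish (your Macdonald generating-function packaging is the same computation as the paper's induction on $C_m(X)$), and kill each signature via the decomposition $(\Sym^n X)_\sigma(R)=\coprod_{b+2a=n}\Sym^b(X_\sigma(R))\times\Sym^a(C_\sigma)$ together with $\chi^t(\Sym^n M)=\binom{\chi^t(M)+n-1}{n}$ and the comparison $\chi^t(X_{\overline\sigma}(\overline R))=\chi^\et(X)=0$. The only cosmetic difference is that the paper cites the classical references (Macdonald, Kharlamov--R\u{a}sdeaconu, Huber) for the steps you flag as needing care.
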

\begin{proof}
The assumption $\chi^c(X)\in\GW(k)_\tors$ implies that $\chi^\et(X)=0$ (by Proposition~\ref{prop:rank = etale}) and $\chi^t(X_\sigma(R))=0$ for any real closed embedding $\sigma:k\to R$ (by Proposition~\ref{prop:sign = etale of real locus}). 

To show that $\rank\chi^c(\Sym^n X)=0$, it suffices to prove that $\chi^\et(\Sym^n_\Delta X ) = 0$. Let $C_m(X)$ denote $X^m \setminus \Delta$ where $\Delta$ is the big diagonal. Then $\Sym^n_\Delta X$ is a disjoint union of quotients of $C_m(X)$ by free group actions for various $m$. Since $\chi^{\et}$ is multiplicative for \'etale maps, it suffices to show $\chi^{\et}(C_m(X)) = 0$. Now we induct on $m$. For $m = 1$, we have $\chi^{\et}(X) = 0$ by assumption. Now suppose $\chi(C_n(X)) = 0$ for all $n \le m$. The projection $C_{m+1}(X) \to C_m(X)$ forgetting the last coordinate is an \'etale-locally trivial fibration over a base with $\chi^{\et}(C_m(X)) = 0$, and so $\chi^{\et}(C_{m+1}(X)) = 0$. 

It remains to show that if $\sigma:k\to R$ is a real closed embedding, then $\sign_\sigma\chi^c(\Sym^n X)=0$ for all $n\geq 1$. By Proposition~\ref{prop:sign = etale of real locus}, for any $k$-variety $Y$, we have
\[\sign_\sigma\chi^c(\Sym^n X)=\chi^t((\Sym^n X)_\sigma(R)),\]
where $\chi^t$ is the compactly supported Euler characteristic and $(\Sym^n X)_\sigma(R)$ is the topological space of $R$-points of $\Sym^n X$ under the embedding $\sigma$. (The topology on $R$ is the one induced by the unique ordering on $R$, generally referred to as the strong topology.) When $R=\mb{R}$, \cite[Equation (3.1)]{KR15} gives us
\[(\Sym^n X)_\sigma(\mb{R})=\coprod_{a+2b=n}\Sym^a(X_\sigma(\mb{R}))\times\Sym^b(X'_\mb{C}),\]
where $X'_\mb{C}:=\frac{X_{\overline{\sigma}}(\mb{C})-X_\sigma(\mb{R})}{\text{conjugation}}$ and $X_{\overline{\sigma}}(\mb{C})$ is the topological space underlying the complex points of $X$. For an arbitrary real closed field $R$, we analogously have
\[(\Sym^n X)_\sigma(R)=\coprod_{a+2b=n}\Sym^a(X_\sigma(R))\times\Sym^b(X'_{\overline{R}}),\]
where $\overline{R}:=R(\sqrt{-1})$ and $X'_{\overline{R}}:=\frac{X_{\overline{\sigma}}(\overline{R})-X_\sigma(R)}{\text{conjugation}}$. Here, $X_{\overline{\sigma}}(\overline{R})$ signifies the topological space underlying the geometric points $X(\overline{R})$, where the topology is induced by endowing the set $\overline{R}=R^2$ with the product topology. Thus
\[\chi^t((\Sym^n X)_\sigma(R))=\sum_{a+2b=n}\chi^t(\Sym^a(X)_\sigma(R))\cdot\chi^t(\Sym^b(X'_{\overline{R}})).\]
It is a classical computation that
\begin{equation}\label{eq:chi binomial}
\chi^t(\Sym^n M)=\binom{\chi^t(M)+n-1}{n}
\end{equation}
for any topological space $M$. This is done by stratifying $\Sym^n M$ so that the quotient $M^n/S_n$ is given by a finite free action over each stratum (see e.g.~\cite{Mac62}). Since $\chi^c(X)=0$, we have $\chi^t(X_\sigma(R))=0$ and hence $\chi^t(\Sym^a(X_\sigma(R)))=0$ for all $a\geq 1$. It follows that
\[\chi^t((\Sym^n X)_\sigma(R))=\begin{cases} 0 & n\text{ odd},\\
\chi^t(\Sym^{n/2}(X'_{\overline{R}})) & n\text{ even}.
\end{cases}\]
It remains to prove that $\chi^t(\Sym^{n/2}(X'_{\overline{R}}))=0$. Over an algebraically closed field, the \'etale Euler characteristic and \'etale Euler characteristic with compact support coincide \cite[(SGA 5) Expos\'e X]{SGA5} (see also \cite{Lau81}). Moreover, $\chi^t(X_{\overline{\sigma}}(\overline{R}))=\chi^\et(X)$ by \cite{Hub84} (see also \cite[\S 15]{Sch94}).  Since $\chi^c(X)=0$, Proposition~\ref{prop:rank = etale} now implies that $\rank\chi^c(X)=\chi^\et(X)=\chi^t(X_{\overline{\sigma}}(\overline{R}))=0$. ``Complex'' conjugation, induced by
\begin{align*}
    R(\sqrt{-1})&\to R(\sqrt{-1})\\
    \sqrt{-1}&\mapsto -\sqrt{-1},
\end{align*}
is a free $\mb{Z}/2\mb{Z}$-action on $X_{\overline{\sigma}}(\overline{R})-X_\sigma(R)$, so $\chi^t(X'_{\overline{R}})=\chi^t(X_{\overline{\sigma}}(\overline{R})-X_\sigma(R))/2=0$. The desired result now follows from Equation~\ref{eq:chi binomial}.
\end{proof}

As a corollary, we also obtain the following compatibility of $\chi^{\et}$ and $\chi^t$ with power structures. 

\begin{thm}\label{thm:complex and real power structures} 
Let $k$ be a field of characteristic not $2$. 
\begin{enumerate}[(i)] 
\item The ring homomorphism
$$
\chi^{\et} = \rank \chi^c : K_0(\rm{Var}_k) \to \mathbb{Z}
$$
is compatible with the natural power structures. 
\item Let $\sigma : k \to R$ be a real closed embedding. Then the ring homomorphism
$$
\sign_\sigma \chi^c : K_0(\rm{Var}_k) \to \mathbb{Z}
$$
is compatible with the natural power structures. 
\end{enumerate}
\end{thm}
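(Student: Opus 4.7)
The plan is to apply Lemma~\ref{lem:induced power structure} in each case. Both $\chi^\et$ and $\sign_\sigma\chi^c$ are surjective ring homomorphisms onto $\mathbb{Z}$, since each sends $[\Spec k]$ to $1$, and the natural power structure on $\mathbb{Z}$ is uniquely determined by the pre-power relation $(1-t)^{-1}=\sum_{n\ge 0}t^n$. By the chain of reductions used to establish Proposition~\ref{prop:kernel of chi^c} (namely Proposition~\ref{prop:virtual classes in kernel} and Corollary~\ref{cor:sums of classes in kernel}), it suffices in each case to verify the following kernel condition: for every quasi-projective variety $X$ with $\varphi([X])=0$, one has $\varphi([\Sym^n_\Delta X])=0$ for all $n\ge 1$, where $\varphi$ is either $\chi^\et$ or $\sign_\sigma\chi^c$. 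Once this is established, Lemma~\ref{lem:induced power structure} produces a unique compatible power structure on $\mathbb{Z}$, and a direct computation at $[\Spec k]$ identifies it as the natural one: $\varphi(\mu_0(1-t,-[\Spec k]))=\sum_n t^n=(1-t)^{-1}$, which determines the natural pre-power structure and hence, by \cite[Proposition~2]{GZLMH:hilbert}, the natural power structure.

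For part (i), the kernel condition is immediate from the argument already present in the proof of Corollary~\ref{cor:symmetric torsion}. Writing $C_m(X)=X^m\setminus\Delta$ for the configuration space of $m$-tuples of distinct points, the projection $C_{m+1}(X)\to C_m(X)$ that forgets the last coordinate is an \'etale-locally trivial fibration whose fiber is an open subscheme of $X$. Multiplicativity of $\chi^\et$ over such fibrations, combined with the hypothesis $\chi^\et(X)=0$, gives $\chi^\et(C_m(X))=0$ for all $m$ by induction. Each stratum of $\Sym^n_\Delta X$ is a free quotient of some $C_m(X)$ by a product of symmetric groups acting via coordinate permutations, hence a finite \'etale cover; $\chi^\et$ scales by the degree, so $\chi^\et(\Sym^n_\Delta X)=0$ for every $n\ge 1$.

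For part (ii), the same argument transplants to the semialgebraic setting via $\sign_\sigma\chi^c([Y])=\chi^t(Y_\sigma(R))$ (Proposition~\ref{prop:sign = etale of real locus}). Given $\chi^t(X_\sigma(R))=0$, one runs the configuration-space induction topologically on $X_\sigma(R)$: the projections $C_{m+1}(X_\sigma(R))\to C_m(X_\sigma(R))$ are semialgebraically locally trivial, and $\chi^t$ is multiplicative over such fibrations. One then combines this with the Kharlamov--R\u{a}sdeaconu-style decomposition used in the proof of Corollary~\ref{cor:symmetric torsion}, which expresses $(\Sym^n_\Delta X)_\sigma(R)$ as a disjoint union with pieces involving $\Sym^a(X_\sigma(R))$ and symmetric powers of the Galois-conjugate-pair locus $X'_{\overline R}=(X_{\overline\sigma}(\overline R)\setminus X_\sigma(R))/\text{conjugation}$. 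The main obstacle is precisely the $X'_{\overline R}$ contribution: the vanishing of $\chi^t(X_\sigma(R))$ kills the $a\ge 1$ terms by Macdonald's formula, so the surviving terms are of the form $\chi^t(\Sym^b(X'_{\overline R}))$, and these must be shown to vanish using only the hypothesis $\chi^t(X_\sigma(R))=0$ and the relations between $X_\sigma(R)$, $X_{\overline\sigma}(\overline R)$, and $X'_{\overline R}$. Once this step is carried out, Lemma~\ref{lem:induced power structure} applied to $\sign_\sigma\chi^c$ together with the pre-power computation at $[\Spec k]$ completes the proof.
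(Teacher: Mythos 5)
Part (i) is correct and is essentially the paper's own argument: the paper reduces both parts to Corollary~\ref{cor:symmetric torsion} via Lemma~\ref{lem:induced power structure} and the reduction in Proposition~\ref{prop:kernel of chi^c}, and the configuration-space induction you run for $\chi^\et$ is exactly the rank half of the proof of that corollary.

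Part (ii) has a genuine gap, and the step you flag as ``the main obstacle'' cannot be carried out as posed. Since conjugation acts freely on $X_{\overline{\sigma}}(\overline{R})\setminus X_\sigma(R)$, one has
\[
\chi^t(X'_{\overline{R}})=\tfrac{1}{2}\bigl(\chi^t(X_{\overline{\sigma}}(\overline{R}))-\chi^t(X_\sigma(R))\bigr)=\tfrac{1}{2}\bigl(\chi^\et(X)-\sign_\sigma\chi^c(X)\bigr),
\]
so the hypothesis $\sign_\sigma\chi^c(X)=0$ alone does not kill the $\Sym^b(X'_{\overline{R}})$ contributions; one also needs $\chi^\et(X)=0$. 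Concretely, $X=\mb{P}^1_{\mb{R}}$ has $\sign\chi^c(\mb{P}^1)=\sign(\mb{H})=0$, yet $\Sym^2\mb{P}^1\cong\mb{P}^2$ and $\sign\chi^c(\mb{P}^2)=\sign(3_\eps)=1$, so the kernel condition of Lemma~\ref{lem:induced power structure} fails for $\vphi=\sign_\sigma\chi^c$. More generally the Kharlamov--R\u{a}sdeaconu decomposition gives $\sign_\sigma\chi^c(\mu_0(1-t,-[X]))=(1-t)^{-\sign_\sigma\chi^c(X)}\cdot(1-t^2)^{-(\chi^\et(X)-\sign_\sigma\chi^c(X))/2}$, which differs from the natural power $(1-t)^{-\sign_\sigma\chi^c(X)}$ whenever $\chi^\et(X)\neq\sign_\sigma\chi^c(X)$. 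Be aware that the paper's one-line proof hides the same issue: it invokes Corollary~\ref{cor:symmetric torsion}, whose hypothesis is that $\chi^c(X)$ is \emph{torsion}, i.e.\ that the rank and all signatures vanish simultaneously---strictly stronger than $X\in\ker(\sign_\sigma\chi^c)$. So your inability to close this step is not a defect of your write-up; the compatibility in part (ii) only holds for the homomorphism $\sign_\sigma\chi^c$ on classes where $\chi^\et$ also vanishes (equivalently, for the pair $(\rank,\sign_\sigma)$ with the appropriate power structure on $\mb{Z}\times\mb{Z}$), and your proof should be reformulated accordingly rather than completed as written.
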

\begin{proof} Recall the natural power structure on $\mathbb{Z}$ is given by exponentiation of power series. Let $\varphi$ denote either $\chi^\et$ or $\sign_\sigma\chi^c$. By Lemma \ref{lem:induced power structure} and the proof of Proposition \ref{prop:kernel of chi^c}, it suffices to check that $\varphi(\Sym^n X) = 0$ for all $X$ with $\varphi(X) = 0$. This is precisely Corollary \ref{cor:symmetric torsion}.
\end{proof}

\begin{rem}
Rather than asking for Conjecture~\ref{conj:main} to hold for all elements of $K_0(\Var_k)$, one can instead consider the largest subring of $K_0(\Var_k)$ on which the values of $\chi^c$ are compatible with the expected power structure on $\GW(k)$. This question is explored in \cite{PRV24}, where the authors dub this subring $K_0(\Sym_k)$. They show that classes of cellular varieties belong to this subring by applying a theorem of G\"ottsche.

Theorem~\ref{thm:complex and real power structures} implies that if $\chi^c(X)$ is determined by its rank and signature (e.g.~if $\chi^c(X)\in\mb{Z}\cdot\{\langle 1\rangle,\langle -1\rangle\}$), then $X\in\Sym_k$. This includes all cellular varieties by Lemma~\ref{lem:cellular decomposition}, giving an alternate proof that cellular varieties are ``symmetrizable.'' By \cite[Theorem~5.11]{BachmannWickelgren}, any variety that is the base change of a smooth proper $\mb{Z}$-scheme also satisfies these criteria. This includes projective spaces, Grassmannians, and Hilbert schemes of smooth projective curves and surfaces, although all of these examples are already cellular.
\end{rem}

Finally, Corollary~\ref{cor:symmetric torsion} implies that Conjecture~\ref{conj:main} is true over any field for which $\GW(k)$ is torsion-free.

\begin{thm}\label{thm:conj over pythagorean}
Let $k$ be a field of characteristic not 2 such that $\GW(k)_\tors=\{0\}$ (such as a pythagorean field of characteristic not 2). Then Conjecture~\ref{conj:main} is true over $k$.
\end{thm}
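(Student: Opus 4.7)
The proof will be a short deduction, essentially an immediate consequence of Corollary~\ref{cor:symmetric torsion}. The plan is as follows: take a quasi-projective $k$-variety $X$ with $\chi^c(X) = 0$, and observe that $0 \in \GW(k)_{\tors}$ trivially. Corollary~\ref{cor:symmetric torsion} then gives $\chi^c(\Sym^n X) \in \GW(k)_{\tors}$ for every $n \geq 1$. Invoking the hypothesis $\GW(k)_{\tors} = \{0\}$ upgrades this to $\chi^c(\Sym^n X) = 0$, which is exactly Conjecture~\ref{conj:main}. No new categorical or motivic input is required beyond what has already been assembled.

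The only remaining point is the parenthetical remark that pythagorean fields of characteristic not $2$ satisfy $\GW(k)_{\tors} = \{0\}$. The plan here is to recall Pfister's local--global principle (cited in the proof of Proposition~\ref{prop:torsion}): modulo the hyperbolic form, the torsion of $\GW(k)$ is exactly the intersection of the kernels of all signatures. For a (formally real) pythagorean field, every torsion element of the Witt ring vanishes, since the sums-of-squares structure forces the isotropy of all torsion Pfister forms, and moreover $\langle 1,1\rangle$ is anisotropic over such $k$ so that $\GW(k) \hookrightarrow W(k) \oplus \mathbb{Z}$ via (Witt class, rank) is torsion-detecting. One sentence citing this classical fact will suffice.

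The main ``obstacle''—really the only content—is that one must have Corollary~\ref{cor:symmetric torsion} in hand; once its rank and signature arguments (which do the real work via the topological symmetric-power computations and Pfister's theorem) have been carried out, Theorem~\ref{thm:conj over pythagorean} is a direct corollary. Thus the proof will be written as a two- or three-sentence deduction, with the pythagorean example included as an explicit instance of the hypothesis $\GW(k)_{\tors} = \{0\}$.
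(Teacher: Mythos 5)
Your proposal is correct and matches the paper's proof essentially verbatim: apply Corollary~\ref{cor:symmetric torsion} to conclude $\chi^c(\Sym^n X)\in\GW(k)_\tors$ and then invoke the hypothesis $\GW(k)_\tors=\{0\}$. Your extra paragraph justifying that (formally real) pythagorean fields satisfy the hypothesis is a harmless addition the paper leaves implicit.
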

\begin{proof}
Let $X$ be a quasi-projective variety over $k$. If $\chi^c(X)=0$, then Corollary~\ref{cor:symmetric torsion} implies that $\chi^c(\Sym^nX)\in\GW(k)_\tors$ for all $n\geq 1$. By assumption, $\GW(k)_\tors=\{0\}$, giving the desired result.   
\end{proof}

It follows that all of the conditional results presented in this article hold over such fields, which are precisely those fields with 2-primary virtual cohomological dimension at most 1 (by a special case, proven by Merkur'ev \cite[Theorem 2.2]{Mer81}, of the Milnor conjecture). 

\bibliography{yz}{}
\bibliographystyle{alpha}
\end{document}